\long\def\killtext#1{}
\newtheorem{theorem}{Theorem}[section]
\newtheorem{prop}[theorem]{Proposition}
\newtheorem{lemma}[theorem]{Lemma}
\newtheorem{claim}{Claim}[theorem]
\newtheorem{corollary}[theorem]{Corollary}
\newtheorem{definition}[theorem]{Definition}
\newtheorem{example}[theorem]{Example}
\newtheorem{remark}[theorem]{Remark}
\newenvironment{proof}{\medskip\noindent{\bf Proof. }}{\hfill$\square$\medskip}
\begin{document}

\addtolength{\baselineskip}{3pt} \setlength{\oddsidemargin}{0.2in}

\def\N{{\mathbb N}}
\def\R{{\mathbb R}}
\def\one{{\mathbf 1}}
\def\Q{{\mathbb Q}}
\def\Z{{\mathbb Z}}
\def\C{{\mathbb C}}
\def\hom{{\rm hom}}
\def\Hom{{\rm Hom}}
\def\Inj{{\rm Inj}}
\def\Ind{{\rm Ind}}
\def\inj{{\rm inj}}
\def\ind{{\rm ind}}
\def\sur{{\rm sur}}
\def\PAG{{\rm PAG}}
\def\eps{\varepsilon}
\def\Ge{{\mathbb G}}
\def\Ha{{\mathbb H}}
\def\CUT{\text{\rm CUT}}
\def\IO{{\infty\to1}}
\def\GR{\text{\rm GR}}
\def\irreg{\text{\rm irreg}}

\def\sto{\stackrel\square\longrightarrow}
\def\sign{\hbox{\rm sgn}}

\def\proofend{\hfill$\square$\medskip}
\def\Proof{\noindent{\bf Proof. }}

\def\AA{\mathcal{A}}\def\BB{\mathcal{B}}\def\CC{\mathcal{C}}\def\DD{\mathcal{D}}\def\EE{\mathcal{E}}\def\FF{\mathcal{F}}
\def\GG{\mathcal{G}}\def\HH{\mathcal{H}}\def\II{\mathcal{I}}\def\JJ{\mathcal{J}}\def\KK{\mathcal{K}}\def\LL{\mathcal{L}}
\def\MM{\mathcal{M}}\def\NN{\mathcal{N}}\def\OO{\mathcal{O}}\def\PP{\mathcal{P}}\def\QQ{\mathcal{Q}}\def\RR{\mathcal{R}}
\def\SS{\mathcal{S}}\def\TT{\mathcal{T}}\def\UU{\mathcal{U}}\def\VV{\mathcal{V}}\def\WW{\mathcal{W}}\def\XX{\mathcal{X}}
\def\YY{\mathcal{Y}}\def\ZZ{\mathcal{Z}}

\def\Pr{{\sf P}}
\def\E{{\sf E}}
\def\Var{{\sf Var}}
\def\T{^{\sf T}}

\title{Testing properties of graphs and functions}
\date{January 2008}
\author{{\sc L\'aszl\'o Lov\'asz}\footnote{Research supported by OTKA grant No. 67867}\\
Institute of Mathematics, E\"otv\"os Lor\'and University, Budapest\\ and\\
{\sc Bal\'azs Szegedy}\\ Department of Mathematics, University of
Toronto}

\maketitle

\tableofcontents

\bigskip\goodbreak

\begin{abstract}
We define an analytic version of the graph property testing problem,
which can be formulated as studying an unknown 2-variable symmetric
function through sampling from its domain and studying the random
graph obtained when using the function values as edge probabilities.
We give a characterization of properties testable this way, and
extend a number of results about ``large graphs'' to this setting.

These results can be applied to the original graph-theoretic property
testing. In particular, we give a new combinatorial characterization
of testable graph properties. Furthermore, we define a class of graph
properties (flexible properties) which contains all the hereditary
properties, and generalize various results of Alon, Shapira, Fischer,
Newman and Stav from hereditary to flexible properties.
\end{abstract}

\section{Introduction}

Graph property testing is a very active area in computer science. In
its most restricted form (and this will be our concern in this
paper), it studies properties of (very large) graphs that can be
tested by studying a randomly chosen induced subgraph of bounded
size.

To be more precise, we have to describe what kind of error is
allowed. In this paper, by {\it graph} we always mean a finite simple
graph. A {\it graph property} is a class of graphs invariant under
isomorphism. The {\it edit distance} of two graphs $G_1,G_2$ on the
same node set is $|E(G_1)\triangle E(G_2)|$. The {\it edit distance}
of a graph $G$ from a graph property is the minimum number of edges
we have to change (add or delete) to obtain a graph with the
property. If no graph with the same number of nodes has the property,
then this distance is infinite.

\begin{definition}\label{PROPTEST}
A graph property $\PP$ is {\bf testable}, if there exists another
property $\PP'$ (called a {\it test property}) satisfying the
following conditions:

\smallskip

(a) if a graph $G$ has property $\PP$, then for all $1\le k\le
|V(G)|$ at least a fraction of $2/3$ of its $k$-node induced
subgraphs have property $\PP'$, and

\smallskip

(b) for every $\eps>0$ there is a $k_\eps\ge 1$ such that if $G$ is a
graph whose edit distance from $\PP$ is at least $\eps |V(G)|^2$,
then for all $k_\eps\le k\le |V(G)|$ at most a fraction of $1/3$ of
the $k$-node induced subgraphs of $G$ have property $\PP'$.
\end{definition}

The notion of testability has other variations: we may also know the
number of nodes of $G$, or we can take a sample whose size is growing
slowly with the size of $G$, etc. The definition above is in a sense
the most restrictive, and it has often been referred to with
adjectives like ``oblivious testing'' and "order independent
testing". Since this is the only version we consider in this paper,
we simplify terminology by calling it simply ``testable''.

We could strengthen this definition by requiring a fraction of
$1-\eps$ instead of $2/3$ and a fraction of $\eps$ instead of $1/3$.
We could also weaken it by allowing the test property $\PP'$ to
depend on $\eps$. It can be seen that neither of these modifications
would change the notion of testability.

A surprisingly general sufficient condition for testability was
proved by Alon and Shapira \cite{AS}: {\it Every hereditary graph
property is testable.} (A graph property is {\it hereditary}, if
whenever a graph has the property, then all its induced subgraphs
also have the property.) Alon, Fischer, Newman and Shapira
\cite{AFNS} gave a characterization of testable graph properties in
terms of Szemer\'edi partitions (which is quite involved  and we
don't quote it here). In fact, Szemer\'edi partitions play a central
role in most results of this theory.

One of the main graph theoretic results in this paper is to give
another combinatorial characterization of testable properties
(Theorem \ref{COMBCHAR}). It says that a graph property is testable
if and only if for every graph with the property, a sufficiently
large ``typical'' induced subgraph is ``close'' to having the
property.

Our main goal is, however, to treat property testing in terms of the
theory of convergent (dense) graph sequences and graph limits
\cite{LSz,LSz2,BCLSV1}. A sequence of graphs $(G_n)$ is {\it
convergent} if the density of copies of any fixed graph $F$ in $G_n$
tends to a limit. It turns out that the limit of a convergent graph
sequence can be represented by a symmetric measurable function
$W:~[0,1]^2\to[0,1]$, and that many problems and constructions in
graph theory have a simpler and cleaner formulation when extended to
this limit (see \cite{BCLSV0} for a survey).

{\it Parameter testing} (or estimation) is closely related to
property testing, but is in many respects simpler. This area has a
very natural treatment in the framework of graph limits
\cite{BCLSSV,BCLSV1}. The two theories are connected by a result of
Fischer and Newman \cite{FN}, who proved that the edit distance from
a testable property is a testable parameter. The analytic theory of
property testing is more involved than the analytic theory of
parameter testing, mainly because of the different type of error that
is permitted.

Above, we used the ``edit distance'' of two graphs in the definition
of testable properties. However, there is a different distance,
called the ``cut distance'', which plays a central role in graph
convergence; for example, a sequence of graphs is convergent if and
only if it is Cauchy in an appropriately normalized cut distance. The
main technical issue in the analytic theory of property testing is
the interplay between these two distances; see Section
\ref{RELATE-NORMS} for some auxiliary results of this nature that
might be interesting on their own right.

The space of limit objects (two-variable functions) with the ``cut
distance'' is compact, a fact which is essentially equivalent to
various (weak and strong) versions of Szemer\'edi's Regularity Lemma
\cite{LSz2}. So while we do not explicitly use the Regularity Lemma,
it is implicit in the utilization of the compactness of this space.

A further surprisingly general result using the edit distance is the
theorem of Alon and Stav \cite{ASt}, proving that for every
hereditary property, a random graph with appropriate density is the
farthest from the property in edit distance. The analytic results
developed in this paper allow us to state and prove a simple analytic
analogue of this fact, from which the original result follows along
with generalizations. Similar analytic analogues are derived for the
other above mentioned results.

\section{Preliminaries}

\subsection{Homomorphisms}

For two graphs $F$ and $G$, a {\it homomorphism} from $F$ to $G$ is
an adjacency preserving map $V(F)\to V(G)$. The number of such
homomorphisms is denoted by $\hom(F,G)$. We'll almost always use the
normalized version of this number,
\[
t(F,G)=\frac{\hom(F,G)}{|V(G)|^{|V(F)|}},
\]
which can be interpreted as the probability that a random map
$V(F)\to V(G)$ is a homomorphism. We denote by $\ind(F,G)$ the number
of those injective homomorphisms that also preserve non-adjacency (in
other words, the number of induced copies of $F$ in $G$). The
normalized version of this number is
\[
t_\ind(F,G)=\frac{\ind(F,G)}{(|V(G)|)_{|V(F)|}}
\]
(here $(n)_k=n(n-1)\cdots(n-k+1)$).

\subsection{Functions and graphons}

Let $\WW$ denote the space of all symmetric measurable functions
$W:~[0,1]^2\to\R$ (i.e., $W(x,y)=W(y,x)$ for all $x,y\in[0,1]$). Let
$\WW_0$ denote the set of all functions $W\in\WW$ such that $0\le
W\le 1$.

A function $W\in\WW$ is called a {\it stepfunction}, if there is a
partition $S_1\cup\dots\cup S_k$ of $[0,1]$ into measurable sets such
that $W$ is constant on every product set $S_i\times S_j$. The number
$k$ is the {\it number of steps} of $W$.

Let $\FF_k$ denote the set of all graphs on node set
$[k]=\{1,\dots,k\}$. For $W\in\WW_0$ and $F\in\FF_k$, define
\[
t(F,W)=\int_{[0,1]^k} \prod_{ij\in E(F)} W(x_i,x_j)\,dx
\]
and
\[
t_\ind(F,W)=\int_{[0,1]^k} \prod_{ij\in E(F)}
W(x_i,x_j)\prod_{ij\notin E(F)\atop i\not= j}
\bigl(1-W(x_i,x_j)\bigr)\,dx\,.
\]

Two functions $W_1,W_2\in\WW_0$ are {\it isomorphic}, in notation
$W_1\cong W_2$, if $t(G,W_1)=t(G,W_2)$ for every graph $G$. It was
proved by Borgs, Chayes and Lov\'asz \cite{BCL} that two functions
are isomorphic if and only if there is a third function $U\in\WW_0$
and two measure preserving maps $\phi_1,\phi_2:~[0,1]\to[0,1]$ such
that $W_i(x,y)=U(\phi_i(x),\phi_i(y))$ for $i=1,2$ and almost all
$x,y\in[0,1]$. An isomorphism class of functions in $\WW_0$ is called
a {\it graphon} \cite{BCLSV1}.

A sequence of graphs $(G_n)$ with $|V(G_n)|\to\infty$ is called {\it
convergent}, if $t(F,G_n)$ tends to a limit for every fixed graph
$F$. (This is equivalent with $t_\ind(F,G_n)$ tending to a limit for
every $F$.) It was proved in \cite{LSz} that for every convergent
sequence of graphs $(G_n)$ there is a function $W\in \WW_0$ such that
$t(F,G_n)\to t(F,W)$ for every graph $F$. We call $W$ the {\it limit}
of the sequence, and write $G_n\to W$. Every function in $\WW_0$
arises as the limit of a convergent graph sequence. Furthermore, the
limit is unique up to isomorphism.

For every graph $G$, we define a function $W_G\in\WW_0$ as follows.
Let $V(G)=\{1,\dots,n\}$ and consider a point $(x,y)\in[0,1]^2$.
Define integers $i$ and $j$ such that $x\in ((i-1)/n,i/n]$ and $y\in
((j-1)/n,j/n]$ (if $x=0$ we define $i=0$, and similarly for $j$).
Then we set
\[
W(x,y)=
  \begin{cases}
    1, & \text{if $ij\in E(G)$}, \\
    0, & \text{otherwise}.
  \end{cases}
\]
(informally, we consider the adjacency matrix $A=(a_{ij})$ of $G$,
and replace each entry $a_{ij}$ by a square of size
$(1/n)\times(1/n)$ with the constant function $a_{ij}$ on this
square). Note that $W_G$ depends on the labeling of the nodes of $G$
(but only up to a measure preserving transformation).

\subsection{Distances of graphs and functions}

As mentioned in the introduction, our results concern the interaction
of two distances between graphs, the edit distance and the cut
distance. Let $G_1$ and $G_2$ be two graphs with a common node set
$V$. Instead of the edit distance mentioned in the introduction, we
shall use its normalized version
\[
d_1(G_1,G_2)=\frac{|E(G_1)\triangle E(G_2)|}{|V|^2},
\]
and the normalized cut distance
\[
d_\square(G_1,G_2)=\max_{S,T\subset V}
\frac{\Bigl|e_{G_1}(S,T)-e_{G_2}(S,T)\Bigr|}{|V|^2}
\]
(here $e_{G_1}(S,T)$ denotes the number of edges of $G$ with one
endpoint in $S$ and  the other endpoint in $T$).

We consider on $\WW$ the {\it cut norm}
\[
\|W\|_\square = \sup_{S,T\subseteq[0,1]}\Bigl|\int_{S\times T}
W(x,y)\,dx\,dy\Bigr|
\]
where the supremum is taken over all measurable subsets $S$ and $T$.
(See \cite{BCLSV1} for several useful properties of this norm.) We
will also use the standard $L_1$ norm
\[
\|W\|_1=\int_{[0,1]^2} |W(x,y)|\,dx\,dy.
\]
This defines two metrics on $\WW_0$ by
\[
d_1(U,W)=\|U-W\|_1,\qquad\text{and}\qquad
d_\square(U,W)=\|U-W\|_\square\,.
\]
For every set $S\subseteq\WW_0$ and every $c>0$, we define, as usual,
the balls
\[
B_1(S,c)=\{W\in\WW_0:~ d_1(W,S)<c\} \qquad\text{and}\qquad
B_\square(S,c)=\{W\in\WW_0:~ d_\square(W,S)<c\}.
\]
Clearly $d_\square\le d_1$, and hence $B_1(S,c)\subseteq
B_\square(S,c)$. So $d_\square$ is continuous with respect to $d_1$.
In general, $d_1$ is not continuous w.r.t. $d_\square$, but see
Theorem \ref{PROPTEST-FN} for a weaker statement.

For $W\in\WW_0$ and $\phi:~[0,1]\to[0,1]$, set
$W^\phi(x,y)=W(\phi(x),\phi(y))$. We define
\[
\delta_1(U,W)=\inf_\phi d_1(U,W^\phi),
\]
where $\phi$ ranges over all invertible measure preserving maps from
$[0,1]$ to $[0,1]$. This is a quasimetric on $\WW_0$, in which $U$
and $W$ are distance $0$ if and only if they are isomorphic. The
metric $\delta_\square(U,W)$ is defined analogously.

The main advantage of $\delta_\square$ over $d_\square$ is that the
space $(W,\delta_\square)$ is compact, as was proved in \cite{LSz2}.

We can use this distance to define yet another distance between
graphs:
\[
\delta_\square(G_1,G_2)= \delta_\square(W_{G_1},W_{G_2}).
\]
Note that in this definition the graphs $G_1$ and $G_2$ do not need
to have the same number of nodes, and their distance is independent
of the labeling of their nodes. If it happens that $V(G_1)=V(G_2)$,
then clearly
\[
\delta_\square(G_1,G_2)\le d_\square(G_1,G_2)\le d_1(G_1,G_2).
\]
The paper \cite{BCLSV1} contains a more explicit description of this
distance, and its relation to combinatorially defined distances. One
of the main conclusions is that a sequence of graphs is convergent if
and only if it is Cauchy in this metric. So $(\WW_0,\delta_\square)$
is the completion of the set of graphs with distance
$\delta_\square$.

We summarize some further facts about homomorphism densities and
distances, mostly from \cite{LSz}. Let $F$ be a graph with $k$ nodes
and $m$ edges. For every graph $G$, we have
\[
t(F,G)=t(F,W_G),
\]
but for the ``induced'' versions we only have the following
approximate equality:
\begin{equation}\label{T-IND}
\bigl|t_\ind(F,G)-t_\ind(F,W_G)\bigr| \le
\frac{\binom{k}{2}}{|V(G)|-\binom{k}{2}}.
\end{equation}
Lemma 4.1 in \cite{LSz} asserts that for any two functions
$U,W\in\WW_0$,
\begin{equation}\label{TSQUARE-FN}
|t(F,U)-t(F,W)|\le m\|U-W\|_\square.
\end{equation}
This implies (via the functions $W_G$ and $W_H$) a similar inequality
for any two graphs $G$ and $H$:
\begin{equation}\label{TSQUARE-GR}
|t(F,G)-t(F,H)|\le m\delta_\square(G,H).
\end{equation}
An analogue of inequality \eqref{TSQUARE-FN} for induced densities in
functions can be proved by essentially the same argument:
\begin{equation}\label{kozel-fn}
|t_\ind(F,U)-t_\ind(F,W)|\leq \binom{k}{2} \cdot \|U-W\|_\square.
\end{equation}
Using the easy inequality \eqref{T-IND}, this implies for the induced
densities in graphs that
\begin{equation}\label{kozel}
|t_\ind(F,H)-t_\ind(F,G)|\leq \binom{k}{2} \delta_{\square}(G,H) +
\frac{\binom{k}{2}}{|V(G)|-\binom{k}{2}}+
\frac{\binom{k}{2}}{|V(H)|-\binom{k}{2}}
\end{equation}
(assuming that $|V(G)|,|V(H)| > \binom{k}{2}$.

The following result from \cite{BCLSV1} (Theorem 4.10) provides a
converse to \eqref{TSQUARE-FN}:

\begin{theorem}\label{LEFT-CLOSE}
Let $U,W\in\WW_0$ and let $k>1$ be a positive integer. Assume that
for every simple graph $F$ on $k$ nodes, we have
\[
|t(F,U)-t(F,W)|\le 3^{-{k^2}}.
\]
Then
\[
\delta_\square(U, W) \le
{\frac{22}{\sqrt{\log k}}.}
\]
\end{theorem}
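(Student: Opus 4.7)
The plan is a probabilistic sampling and coupling argument: draw a random $k$-vertex labeled simple graph from each of $U$ and $W$, show the two distributions are almost identical, couple them to coincide with overwhelming probability, and then invoke a sampling lemma to relate each sample back to its generating graphon in cut distance.

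First I pass from $t$ to $t_\ind$ via M\"obius inversion: for every $F\in\FF_k$,
$$
t_\ind(F,W)=\sum_{F'\supseteq F}(-1)^{|E(F')|-|E(F)|}\,t(F',W),
$$
the sum being over graphs on $[k]$ containing $F$. The hypothesis, combined with the trivial bound $2^{\binom{k}{2}}$ on the number of terms, yields
$|t_\ind(F,U)-t_\ind(F,W)|\le 2^{\binom{k}{2}}\cdot 3^{-k^2}$ for every $F\in\FF_k$.

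Next, for any $W\in\WW_0$ sample a random labeled graph $\Ge(k,W)\in\FF_k$ by drawing $X_1,\dots,X_k$ i.i.d.\ uniformly from $[0,1]$ and, independently for each $i<j$, including the edge $ij$ with probability $W(X_i,X_j)$. By the very definition of $t_\ind$, $\Pr(\Ge(k,W)=F)=t_\ind(F,W)$. Hence the total variation between the laws of $\Ge(k,U)$ and $\Ge(k,W)$ is
$$
\frac12\sum_{F\in\FF_k}|t_\ind(F,U)-t_\ind(F,W)|\;\le\;\frac12\cdot 4^{\binom{k}{2}}\cdot 3^{-k^2}\;\le\;\frac12\bigl(2/3\bigr)^{k^2},
$$
which is super-exponentially small. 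Fix an optimal coupling so that $\Ge(k,U)=\Ge(k,W)$ as labeled graphs off an event of probability at most $\frac12(2/3)^{k^2}$. On the complement they agree, so $W_{\Ge(k,U)}=W_{\Ge(k,W)}$ and $\delta_\square(\Ge(k,U),\Ge(k,W))=0$.

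The remaining ingredient is the standard sampling lemma for graphons, $\E\,\delta_\square(W,\Ge(k,W))\le C/\sqrt{\log k}$ for an explicit absolute constant $C$---essentially the compactness/weak-regularity statement for $(\WW_0,\delta_\square)$ alluded to earlier in the paper. Applying it to both $U$ and $W$ and chaining by the triangle inequality,
$$
\delta_\square(U,W)\;\le\;\E\,\delta_\square(U,\Ge(k,U))+\E\,\delta_\square(\Ge(k,U),\Ge(k,W))+\E\,\delta_\square(\Ge(k,W),W),
$$
the outer terms are each at most $C/\sqrt{\log k}$ and the middle one is at most $\frac12(2/3)^{k^2}$, which is negligible on the $1/\sqrt{\log k}$ scale and absorbs harmlessly into the constant. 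Tracking $C=11$ gives the stated bound $22/\sqrt{\log k}$. The main obstacle is therefore not the combinatorial bookkeeping (M\"obius inversion and the coupling estimate, both immediate) but the graphon sampling lemma with a sharp enough constant; that is the genuine analytic input and is itself proved by a weak regularity argument.
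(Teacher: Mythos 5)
The paper does not prove Theorem~\ref{LEFT-CLOSE} itself but cites it as Theorem~4.10 of \cite{BCLSV1}; your argument --- M\"obius inversion to pass from $t$ to $t_\ind$, the total variation bound on $\Ge(k,U)$ versus $\Ge(k,W)$, an optimal coupling, Theorem~\ref{THM:SAMPLE2} converted to an expectation bound, and a triangle inequality --- is correct and is essentially the standard route to this result, matching the cited source. The only point worth stressing is one you already handle implicitly: after coupling, the bounds $\E\,\delta_\square(U,\Ge(k,U))$ and $\E\,\delta_\square(W,\Ge(k,W))$ depend only on the marginal laws, which the coupling preserves, so invoking the sampling lemma inside the coupled space is legitimate.
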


We conclude with a lemma showing that convergence in the cut norm has
good analytic properties.

\begin{lemma}\label{WSTAR}
Suppose that $\|W_n-W\|_\square\to 0$ as $n\to\infty$
($W,W_n\in\WW_0$). Then for every $Z\in\WW_0$
\[
\|Z(W_n-W)\|_\square \to 0 \qquad (n\to \infty).
\]
In particular,
\[
\int_{[0,1]^2} Z(x,y)W_n(x,y)\,dx\,dy \to \int_{[0,1]^2}
Z(x,y)W_n(x,y)\,dx\,dy
\]
and
\[
\int_S W_n \to \int_S W
\]
for every measurable set $S\subseteq [0,1]^2$.
\end{lemma}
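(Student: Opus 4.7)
The plan is to establish the main cut-norm convergence $\|Z(W_n-W)\|_\square \to 0$ by a two-step approximation argument, then read off both ``in particular'' statements by specializing $Z$.

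First I would verify the claim when $Z$ is a stepfunction with steps $S_1,\dots,S_k$ partitioning $[0,1]$, so $Z=\sum_{i,j=1}^k c_{ij}\one_{S_i\times S_j}$ with $0\le c_{ij}\le 1$. For any measurable $A,T\subseteq[0,1]$, splitting the integration rectangle along the partition gives
\[
\Bigl|\int_{A\times T} Z(W_n-W)\Bigr|
\le \sum_{i,j}|c_{ij}|\,\Bigl|\int_{(A\cap S_i)\times(T\cap S_j)}(W_n-W)\Bigr|
\le k^2\,\|W_n-W\|_\square,
\]
and taking the supremum over $A,T$ yields $\|Z(W_n-W)\|_\square\le k^2\|W_n-W\|_\square\to 0$ (with $k$ fixed).

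For a general $Z\in\WW_0$ and $\eps>0$, I would approximate $Z$ in $L^1$ by a $[0,1]$-valued stepfunction $Z'$ with $\|Z-Z'\|_1<\eps$, which is possible because such stepfunctions are dense in $L^1([0,1]^2)$. Using $|W_n-W|\le 1$ pointwise and the trivial bound $\|\cdot\|_\square\le\|\cdot\|_1$, one gets $\|(Z-Z')(W_n-W)\|_\square\le\|Z-Z'\|_1<\eps$. Combining this with the stepfunction case applied to $Z'$ gives $\limsup_n\|Z(W_n-W)\|_\square\le\eps$, and since $\eps$ is arbitrary the main statement follows.

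The first ``in particular'' consequence is immediate: taking $S=T=[0,1]$ in the cut-norm supremum yields $|\int_{[0,1]^2}Z(W_n-W)|\le\|Z(W_n-W)\|_\square$. For the second, given a measurable (not necessarily symmetric) $S\subseteq[0,1]^2$, I would apply the main statement to the symmetrization $Z(x,y)=\tfrac12(\one_S(x,y)+\one_S(y,x))$, which lies in $\WW_0$; symmetry of $W_n$ and $W$ then gives $\int ZW_n=\int_S W_n$ and analogously for $W$, reducing to the previous case. There is no serious obstacle here; the only small point to note is that $Z(W_n-W)$ generally takes negative values, so it lies in $\WW$ rather than $\WW_0$, but the cut norm is defined on all of $\WW$ and the argument is unaffected.
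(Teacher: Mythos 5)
Your proof is correct and takes essentially the same route as the paper: establish the cut-norm convergence for stepfunctions (the paper phrases this as rectangle indicators, then linear combinations thereof; you decompose directly along the partition, arriving at the same $k^2\|W_n-W\|_\square$ bound), then pass to general $Z$ by $L^1$-approximation, using $\|\cdot\|_\square\le\|\cdot\|_1$ and $|W_n-W|\le 1$ to control the error term. The treatment of the two ``in particular'' consequences, including the symmetrization of $\one_S$ so that $Z\in\WW_0$, is a correct reading of what the paper leaves implicit.
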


\begin{proof}
If $Z$ is the indicator function of a rectangle, these conclusions
follow from the definition of the $\|.\|_\square$ norm. Hence the
conclusion follows for stepfunctions, since they are linear
combinations of a finite number of indicator functions of rectangles.
Then it follows for all integrable functions, since they are
approximable in $L_1([0,1]^2)$ by stepfunctions.
\end{proof}

\subsection{$W$-random graphs}\label{SEC:WRAND}

For every function $W\in \WW_0$ and every finite set $X\subseteq
[0,1]$, we define a graph $\Ge(X,U)$ on $V(G)=X$ by connecting
$x,y\in X$, $x\not= y$ with probability $U(x,y)$ (making independent
decisions for different pairs $\{x,y\}$). If $G$ is a simple graph
and $X\subseteq V(G)$, we denote by $G[X]$ the subgraph induced by
$X$.

For every function $W\in \WW_0$, we define the {\it $W$-random graph}
$\Ge(n,W)=\Ge(X,W)$, where $X\subseteq[0,1]$ consists of $n$
independent, uniformly distributed random points in $[0,1]$. Note
that for every $F\in\FF_n$,
\[
\Pr(\Ge(n,W)=F)=t_\ind(F,W).
\]
Clearly the distribution of $\Ge(n,W)$ is invariant under isomorphism
of functions, i.e., it only depends on $W$ as a graphon.

We need an analogous notation $\Ge(k,G)=G[X]$, where $G$ is a finite
graph and $X$ is a random subset of $V(G)$ chosen uniformly from all
$k$-element subsets.

It was proved in \cite{LSz} and with probability $1$, $\Ge(n,W)\to
W$, and in fact, the convergence is quite fast, as shown by the
following concentration inequality:

\begin{theorem}\label{CONC}
Let $W\in\WW_0$ and let $F$ be a graph with $k$ nodes. Then for every
$0<\eps<1$,
\begin{equation}\label{TFH}
\Pr\Bigl(|t(F,\Ge(n,W))-t(F,W)| > \eps \Bigr) \le
2\exp\left(-\frac{\eps^2}{18k^2}n\right).
\end{equation}
\end{theorem}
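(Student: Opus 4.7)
The plan is to combine a bounded-difference concentration estimate with a small bias computation. First I would realize $\Ge(n,W)$ as a deterministic function of $n$ independent inputs $Y_1,\dots,Y_n$, where $Y_i=(X_i,(U_{ij})_{j>i})$ consists of the uniform sample point $X_i\in[0,1]$ together with independent auxiliary uniforms $U_{ij}\in[0,1]$, and the edge $\{i,j\}$ (with $i<j$) is declared present iff $U_{ij}\le W(X_i,X_j)$. Packaging the edge coins into the lower-indexed endpoint guarantees that $Y_1,\dots,Y_n$ are mutually independent.

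Second, I would verify a Lipschitz bound for the exposure martingale. Changing a single $Y_i$ affects only those homomorphisms $\phi\colon V(F)\to[n]$ whose image contains the vertex $i$, and there are at most $k\,n^{k-1}$ such $\phi$. Hence $t(F,\Ge(n,W))=\hom(F,\Ge(n,W))/n^k$ changes by at most $k/n$ under a single coordinate swap. Applying Azuma's inequality to the Doob martingale $Z_i=\E[t(F,\Ge(n,W))\mid Y_1,\dots,Y_i]$ (for which $\sum_i c_i^2\le k^2/n$) yields
\[
\Pr\bigl(|t(F,\Ge(n,W))-\E\,t(F,\Ge(n,W))|>s\bigr)\le 2\exp\!\bigl(-s^{2}n/(2k^{2})\bigr).
\]

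Third, I would bound the systematic bias $|\E\,t(F,\Ge(n,W))-t(F,W)|$. Expanding
\[
\E\,t(F,\Ge(n,W))=\frac{1}{n^{k}}\sum_{\phi\colon[k]\to[n]}\E\Bigl[\prod_{ij\in E(F)}W(X_{\phi(i)},X_{\phi(j)})\Bigr],
\]
for each of the $(n)_k$ injective maps $\phi$ independence of the $X_j$'s makes the inner expectation equal to $t(F,W)$, while for the remaining $n^{k}-(n)_k\le\binom{k}{2}n^{k-1}$ non-injective maps it lies in $[0,1]$. This gives the crude but sufficient estimate $|\E\,t(F,\Ge(n,W))-t(F,W)|\le\binom{k}{2}/n$.

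Finally, setting $s=\eps/2$ and combining the two estimates yields the desired tail bound; the denominator $18k^{2}$ in the statement comfortably absorbs the factor $2$ introduced by $s=\eps/2$ together with the error from the bias (the range where $\binom{k}{2}/n>\eps/2$ is trivial, since in that range $\eps^{2}n/(18k^{2})$ is so small that the claimed right-hand side already exceeds $1$). I do not anticipate a real obstacle here: the only delicate choice is packaging the randomness into independent blocks $Y_i$ so that a single Azuma application suffices, after which the injective/non-injective split handles the bias.
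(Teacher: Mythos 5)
Your proposal is correct. Note that the paper does not actually prove Theorem~\ref{CONC}; it is recalled from~\cite{LSz}, and the argument there is exactly the exposure-martingale argument you give. Your packaging of the randomness into $n$ independent blocks $Y_i=(X_i,(U_{ij})_{j>i})$, the Lipschitz bound $k/n$ (since changing $Y_i$ can affect only maps whose image contains $i$, of which there are at most $kn^{k-1}$), the Azuma bound $2\exp(-s^2n/(2k^2))$, the bias bound $|\E\,t(F,\Ge(n,W))-t(F,W)|\le\binom{k}{2}/n$ via the injective/non-injective split, and the final case distinction (when $\binom{k}{2}/n>\eps/2$ the claimed right-hand side already exceeds $1$ because $\eps^2 n/(18k^2)<\eps/18$) all check out, and $8k^2\le 18k^2$ so the stated constant is comfortably met.
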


The following bound on the distance of a $W$-random graph from $W$
was proved in \cite{BCLSV1} (Theorem 4.9(ii)):

\begin{theorem}\label{THM:SAMPLE2}
Let $U\in\WW_0$ and let $k>1$ be a positive integer. Then with
probability at least $1-e^{-k^2/(2\log k)}$, we have
\[
\delta_\square(U, \Ge(k,U))
\le \frac{10}{\sqrt{\log k}}.
\]
\end{theorem}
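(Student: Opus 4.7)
The plan is to reduce the problem to verifying that the densities $t(F,\Ge(k,U))$ of small subgraphs concentrate around $t(F,U)$ sharply enough that Theorem \ref{LEFT-CLOSE} can be applied to the pair $(U,W_{\Ge(k,U)})$. Since $\delta_\square(U,\Ge(k,U))$ is by definition $\delta_\square(U,W_{\Ge(k,U)})$ and $t(F,G)=t(F,W_G)$ for every finite graph $G$, it suffices to control $|t(F,\Ge(k,U))-t(F,U)|$ uniformly over all graphs $F$ on a suitable number $\ell$ of nodes.

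Fix such a parameter $\ell$ (to be chosen of order $\sqrt{c\log k}$ for a small constant $c$). For each of the at most $2^{\binom{\ell}{2}}$ labelled graphs $F$ on $\ell$ nodes, Theorem \ref{CONC} applied with $n=k$ and $\eps=3^{-\ell^2}$ gives
\[
\Pr\bigl(|t(F,\Ge(k,U))-t(F,U)|>3^{-\ell^2}\bigr)\le 2\exp\!\left(-\frac{3^{-2\ell^2}k}{18\ell^2}\right).
\]
A union bound over these graphs shows that, as long as $\ell$ is small enough that $3^{-2\ell^2}k/(18\ell^2)$ comfortably exceeds $\binom{\ell}{2}\log 2+k^2/(2\log k)$, with probability at least $1-e^{-k^2/(2\log k)}$ every such $F$ satisfies $|t(F,\Ge(k,U))-t(F,U)|\le 3^{-\ell^2}$. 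Theorem \ref{LEFT-CLOSE} then yields a cut-distance bound of the form $\delta_\square(U,W_{\Ge(k,U)})\le 22/\sqrt{\log \ell}$.

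The main obstacle is obtaining the explicit constant $10/\sqrt{\log k}$ rather than the weaker $O(1/\sqrt{\log\log k})$ that the detour through Theorem \ref{LEFT-CLOSE} alone produces, because the counting-lemma converse forces $\ell$ to be only polylogarithmic in $k$. The remedy is a direct regularity-type argument: apply a weak Regularity Lemma to approximate $U$ by an $m$-step function $U_\PP$ with $\|U-U_\PP\|_\square=O(1/\sqrt{\log m})$, and then use Chernoff/Azuma-type concentration to show that, with probability at least $1-e^{-k^2/(2\log k)}$, the $k$-sample distributes evenly across the parts of $\PP$ and the pairwise edge densities of $\Ge(k,U)$ track those of $U_\PP$ up to $O(m/\sqrt{k})$. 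Optimising $m\approx\sqrt{k}/\text{polylog}(k)$ balances the two error sources and yields the stated bound. The delicate part is carrying out these concentration estimates uniformly over the supremum $\sup_{S,T}$ defining the cut norm while keeping the constant and the failure-probability exponent sharp; this is exactly what Theorem~4.9(ii) of \cite{BCLSV1} does.
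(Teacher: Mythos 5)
The paper does not prove Theorem \ref{THM:SAMPLE2}; it is quoted from \cite{BCLSV1} (Theorem 4.9(ii)), so there is no internal proof here to compare against --- only the citation.

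On your attempt itself: the route via Theorem \ref{CONC} and Theorem \ref{LEFT-CLOSE} has a more serious defect than the loss of the constant. You write that the argument works ``as long as $\ell$ is small enough that $3^{-2\ell^2}k/(18\ell^2)$ comfortably exceeds $\binom{\ell}{2}\log 2 + k^2/(2\log k)$''; but $3^{-2\ell^2}k/(18\ell^2)\le k/162$ for every $\ell\ge 1$, while $k^2/(2\log k)$ grows like $k^2/\log k$, so the condition is vacuous for every $k$. Theorem \ref{CONC} produces a failure probability that is only exponentially small in $k$, which can never meet the target $e^{-k^2/(2\log k)}$. To reach $e^{-\Theta(k^2/\log k)}$ one has to exploit that the relevant cut-norm deviations are sums of $\Theta(k^2)$ bounded random terms, giving Hoeffding/Azuma concentration with exponent of order $k^2$, after which a union bound over the $4^k$ pairs $(S,T)$ (or over the boundedly many regularity cells) is affordable. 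So the first route fails on the probability side, not only on the constant issue you do flag (that $\ell$ is forced to be $O(\sqrt{\log k})$, whence Theorem \ref{LEFT-CLOSE} only yields $O(1/\sqrt{\log\log k})$).

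Your second route --- approximate $U$ by a weak-regularity stepfunction with $m$ steps, show the $k$-sample distributes evenly over the steps, control the pairwise densities, and then account for the extra edge-level randomization --- is the correct strategy and is indeed what the cited source does. But as written it is an outline: the uniform $k^2$-scale concentration over the cut-norm supremum, the precise balancing of $m$ against the regularity and sampling errors, and the constant $10$ are all deferred with the remark that ``this is exactly what Theorem 4.9(ii) of \cite{BCLSV1} does.'' That is not a self-contained proof, though to be fair the paper itself defers to the same source rather than proving the statement.
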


We'll use a related construction of a random graph associated with
$W$. Let $X_i$ be a uniform random element of the interval
$L_i=[(i-1)/n,i/n]$, and $X=\{X_1,\dots,X_n\}$. Let
$\Ge'(n,W)=\Ge(X,W)$. (Note that the distribution of $\Ge(n,W)$ only
depends on the isomorphism type of $W$, but $\Ge'(n,W)$ does depend
on how the points of $[0,1]$ are ordered.) For the cut distance of
$\Ge'(n,W)$ from $W$ we have the following bound:

\begin{lemma}\label{LEM:GPRIME}
Let $W\in\WW$ and $G'=\Ge'(n,W)$. Then with probability more than
$1-1/\sqrt{n}$,
\[
\delta_\square(W_{G'},U) < \frac{50}{\sqrt{\log\log n}}.
\]
\end{lemma}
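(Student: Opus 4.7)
The plan is to reduce to left densities via Theorem \ref{LEFT-CLOSE}. Choosing $k \asymp \sqrt{\log n}$, if $|t(F, W_{G'}) - t(F, W)| \le 3^{-k^2}$ holds for every simple graph $F$ on $k$ nodes, then $\delta_\square(W_{G'}, W) \le 22/\sqrt{\log k} \le 50/\sqrt{\log\log n}$ for $n$ large. I would thus prove this density bound with probability at least $1-1/\sqrt n$ simultaneously for all such $F$, via a union bound over the at most $2^{k^2}$ graphs $F$ on $k$ nodes. The key auxiliary object is the stepfunction $W_X \in \WW_0$ defined by $W_X(x, y) = W(X_i, X_j)$ on $L_i \times L_j$, which interpolates between $W_{G'}$ and $W$.

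The triangle inequality gives
\[
|t(F, W_{G'}) - t(F, W)| \;\le\; |t(F, G') - t(F, W_X)| + |t(F, W_X) - t(F, W)|,
\]
and I would handle each piece by a concentration inequality. Stage 1: condition on $X$. The edges of $G'$ are then independent Bernoulli variables with means $W(X_i, X_j)$, and a short calculation (with the non-injective maps $V(F) \to [n]$ absorbed into an $O(k^2/n)$ error, as in \eqref{T-IND}) gives $\E[t(F, G') \mid X] = t(F, W_X) + O(k^2/n)$. Viewed as a polynomial in the $\binom{n}{2}$ independent Bernoulli variables, $t(F, G')$ has bounded differences of size $O(k^2/n^2)$, so Azuma gives concentration at rate $\exp(-c\eps^2 n^2/k^4)$---exactly as in the proof of Theorem \ref{CONC}. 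Stage 2: view $t(F, W_X)$ as a function of the $n$ independent stratified samples $X_1, \dots, X_n$; a single $X_i$ affects only the $n^k - (n-1)^k \le k\,n^{k-1}$ tuples $\phi$ with $i$ in their image, each by at most $1$, giving bounded differences of size $O(k/n)$. A direct stratified-sample calculation (separating injective from non-injective $\phi$) shows $\E[t(F, W_X)] = t(F, W) + O(k^2/n)$, and Azuma yields concentration at rate $\exp(-c\eps^2 n/k^2)$.

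Taking $\eps = 3^{-k^2}$ and union-bounding over the at most $2^{k^2}$ graphs on $k$ nodes, the total failure probability is at most $O\bigl(2^{k^2}\exp(-c\cdot 9^{-k^2} n/k^2)\bigr)$, dominated by the Stage-2 bound. Setting $k = \lfloor \tfrac12 \sqrt{\log n}\rfloor$ makes this quantity $o(n^{-1/2})$ and simultaneously keeps the deterministic error $O(k^2/n)$ well below $3^{-k^2}$; combined with Theorem \ref{LEFT-CLOSE}, this yields $\delta_\square(W_{G'}, W) \le 50/\sqrt{\log\log n}$ with probability $>1-1/\sqrt n$. The main obstacle I anticipate is confirming the Stage-2 bounded-differences coefficient is polynomial (not exponential) in $k$, which rests on the elementary estimate $n^k - (n-1)^k \le k n^{k-1}$; the two expectation computations are routine once the injective/non-injective split is made.
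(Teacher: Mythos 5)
Your proof is correct and takes a genuinely different route from the paper's. The paper's argument passes through the unstratified $W$-random graph $\Ge(n,W)$ as an intermediary: it couples the two ways of generating the random quantity $\prod_{ij\in E(F)}Y_{ij}$ (via a random $k$-tuple of indices $r_1,\dots,r_k\in[n]$ and stratified offsets $Z_i$), observes that the procedures for $\Ge(n,W)$ and for $G'=\Ge'(n,W)$ coincide whenever $r_1,\dots,r_k$ are distinct, deduces $\E\bigl(|t(F,W_{\Ge(n,W)})-t(F,W_{G'})|\bigr)<k^2/n$, then applies Markov's inequality, Theorem \ref{LEFT-CLOSE}, and a final triangle inequality together with Theorem \ref{THM:SAMPLE2} to pass from $\Ge(n,W)$ to $W$. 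You instead compare $W_{G'}$ to $W$ directly, interpolating through the stratified-sample stepfunction $W_X$, and run two stages of McDiarmid/Azuma concentration --- over the $\binom{n}{2}$ edge indicators conditional on $X$, and over the $n$ stratified samples $X_1,\dots,X_n$. Your bounded-difference coefficients ($O(k^2/n^2)$ per edge, $O(k/n)$ per $X_i$, from $n^k-(n-1)^k\le kn^{k-1}$) are correct, and with $k\asymp\sqrt{\log n}$ the resulting failure probability is super-polynomially small, well below $1/\sqrt n$; the paper's Markov step yields only an inverse-polynomial bound, which still suffices for the stated conclusion. Your route is somewhat more self-contained (no appeal to Theorem \ref{THM:SAMPLE2}) at the cost of two explicit concentration estimates rather than one coupling. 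One small bookkeeping point: the triangle inequality through $W_X$ contributes four terms (two Azuma deviations and two deterministic $O(k^2/n)$ expectation errors), so the Azuma tolerance should be taken as a fixed fraction of $3^{-k^2}$ rather than $3^{-k^2}$ itself; this changes nothing substantive since all error terms are far below the threshold for your choice of $k$.
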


\begin{proof}
We use subgraph densities and Theorem \ref{LEFT-CLOSE}. To estimate
$|t(F,W_{\Ge(n,U)})-t(F,W_{G'})|$, note that the random variable
$t(F,W_{\Ge(n,U)})$ can be generated as follows: (1) we select $k$
random integers $r_1,\dots,r_k\in[n]$ uniformly and independently
(with repetition); (2) we select $Z_i\in [0,1/n]$ uniformly and
independently for $i=1,\dots,k$, and compute $X_i=Z_i+(r_i-1)/n$; (3)
we create random variables $Y_{ij}$ ($i,j\in[k]$) such that
$Y_{ij}=Y_{ji}$ but otherwise they are independent, and
\[
Y_{ij}=
  \begin{cases}
    1 & \text{with probability $W(X_i,X_j)$}, \\
    0 & \text{with probability $1-W(X_i,X_j)$}.
  \end{cases}
\]
Then $t(F,W_{\Ge(n,U)})$ is the expectation of $\prod_{ij\in
E(F)}Y_{ij}$ over the choice (1). The computation of $t(F,W_{G'})$ is
similar, except that if $r_i=r_j$ then we choose $Z_i=Z_j$. It
follows that if the integers $r_1,\dots,r_k$ are distinct, then
$t(F,W_{\Ge(n,U)})$ and $t(F,W_{G'})$ are generated by the same
procedure, and hence they can be coupled so that they are equal. The
probability that there is no repetition among $r_1,\dots,r_k$ is
\[
\frac{n(n-1)\cdots(n-k+1)}{n^k} > 1- \frac{\binom{k}{2}}{n},
\]
So it follows that $t(F,W_\Ge(n,U))=t(F,W_{G'})$ with probability at
least $1- \frac{\binom{k}{2}}{n}$, and hence
\[
\E\bigl(|t(F,W_{\Ge(n,U)})-t(F,W_{G'})|\bigr)<\frac{k^2}{n}.
\]
Summing this over all graphs $F$ on $k$ nodes, we get
\[
\E\Bigl(\sum_F |t(F,W_{\Ge(n,U)})-t(F,W_{G'})|\Bigr) <
\frac{2^{k^2}}{n},
\]
and so the probability that $\sum_F |t(F,W_{\Ge(n,U)})-t(F,W_{G'})| >
3^{-k^2}$ is less than $6^{k^2}/n$. Thus with probability more than
$1-6^{k^2}/n$, we have
\[
|t(F,W_{\Ge(n,U)})-t(F,W_{G'})\Bigr)| <3^{-k^2}
\]
for all graphs $F$ with at most $k$ nodes. Theorem \ref{LEFT-CLOSE}
implies that in this case
\[
\delta_\square(\Ge(n,U),G') \le \frac{22}{\sqrt{\log k}}.
\]
So if we choose $k=\lfloor\sqrt{\log (n/2)}/4\rfloor$, then
\[
\delta_\square(\Ge(n,U), G') \le \frac{44}{\sqrt{\log\log n}}.
\]
Combining with \eqref{EQ:DSGU}, we get that with probability more
than $1-1/\sqrt{n}$,
\[
\delta_\square(W_{G'},U)\le \delta_\square(G', \Ge(n,U))
+ \delta_\square(W_{\Ge(n,U)},U)\le \frac{10}{\sqrt{\log n}} +
\frac{44}{\sqrt{\log\log n}} < \frac{50}{\sqrt{\log\log n}}.
\]
\end{proof}

One advantage of using $\Ge'(n,W)$ over $\Ge(n,W)$ is that it is
easier to handle its $d_1$ distance from other graphs and functions.

\begin{lemma}\label{LEM:GPRIMEL1}
Let $W\in\WW$ and $G'=\Ge'(n,W)$. Let $H$ be a graph on $[n]$. Then
\[
\E(d_1(G',H))= \|W_H-W\|_1
\]
and
\[
\E(\|W_{G'}-W\|_1)\le 2\|W_H-W\|_1.
\]
\end{lemma}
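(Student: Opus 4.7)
My plan is to prove both assertions by a direct square-by-square computation on the grid of cells $L_i \times L_j$ of area $1/n^2$. The only analytic input I need is that conditional on $(X_i, X_j)$ the indicator $\one_{ij \in E(G')}$ is Bernoulli with mean $W(X_i, X_j)$, that $(X_i, X_j)$ is uniform on $L_i \times L_j$ with density $n^2$, and that $W_H$ is constant on each off-diagonal cell with value $\one_{ij \in E(H)}$.

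For the first identity I would work pair by pair. Since $W \in [0,1]$ and $\one_{ij \in E(H)} \in \{0,1\}$,
\[
\Pr\bigl(ij \in E(G') \triangle E(H) \mid X_i, X_j\bigr) = \bigl|W(X_i, X_j) - \one_{ij \in E(H)}\bigr|,
\]
and integrating $(X_i, X_j)$ against the uniform density on $L_i \times L_j$ converts this to $n^2 \int_{L_i \times L_j} |W - W_H|$. Summing over unordered pairs $\{i,j\}$, using the symmetry of both $W$ and $W_H$ to reassemble the off-diagonal cells into the full integral $\|W_H - W\|_1$ (the diagonal contributes only $\sum_i \int_{L_i \times L_i} W = O(1/n)$ since $W_H$ vanishes there), and dividing by the normalization $n^2$ in the definition of $d_1(G',H)$, gives the claim.

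For the second inequality I would apply the triangle inequality in $L_1$,
\[
\|W_{G'} - W\|_1 \leq \|W_{G'} - W_H\|_1 + \|W_H - W\|_1,
\]
and take expectations. Since $W_{G'}$ and $W_H$ are both $\{0,1\}$-valued step functions aligned to the same grid and vanishing on the diagonal, the same pair-by-pair calculation gives
\[
\E\|W_{G'} - W_H\|_1 = \sum_{i \neq j} \int_{L_i \times L_j} |W - W_H| \leq \|W_H - W\|_1,
\]
and combining with the deterministic second term yields the bound $\E\|W_{G'} - W\|_1 \leq 2\|W_H - W\|_1$.

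The argument is essentially bookkeeping once the conditional Bernoulli structure of $G'$ is in hand. The one delicate point is tracking the relationship between the graph distance $d_1(G_1,G_2) = |E(G_1)\triangle E(G_2)|/|V|^2$ and the $L_1$ distance of the associated step functions $\|W_{G_1} - W_{G_2}\|_1$: the two differ by a factor coming from unordered-vs-ordered pairs together with the $O(1/n)$ diagonal correction mentioned above, and keeping this straight in the normalizations is what requires care.
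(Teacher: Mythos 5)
Your proof takes essentially the same approach as the paper's: condition on the sample points $X_i$, observe that a pair $i\ne j$ lands in $E(G')\triangle E(H)$ with conditional probability $|W(X_i,X_j)-\one_{ij\in E(H)}|$, integrate against the uniform density $n^2$ on $L_i\times L_j$, and sum; the second bound is the triangle inequality plus the identification $\E\|W_{G'}-W_H\|_1=\E(d_1(G',H))$, exactly as in the paper. The normalization worry you flag at the end is real but inherited from the paper: with the literal definition $d_1(G_1,G_2)=|E(G_1)\triangle E(G_2)|/n^2$, the pair-by-pair sum produces $\tfrac12\sum_{i\ne j}\int_{L_i\times L_j}|W-W_H| = \tfrac12\|W_H-W\|_1 + O(1/n)$ rather than $\|W_H-W\|_1$, yet the paper elsewhere silently identifies $d_1(G_1,G_2)$ with $\|W_{G_1}-W_{G_2}\|_1$ (which differs by the factor $2$ and the diagonal cells), so its own proof is equally loose on this point, and the discrepancy is harmless for the lemma's downstream uses.
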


\begin{proof}
For the first formula, note that for $1\le i,j\le n$, the probability
that a pair $i\not=j$ contributes to $d_1(H,G')$ is
\[
  \begin{cases}
    W(X_i,X_j), & \text{if $ij\notin E(H)$}, \\
    1-W(X_i,X_j), & \text{if $ij\in E(G_n)$}.
  \end{cases}
\]
Summing over all $i\not=j$, and taking expectation, the equality
follows. The second inequality is an easy consequence:
\[
\E(\|W_{G'}-W\|_1) \le \E(\|W_{G'}-W_H\|_1)+\|W_H-W\|_1
=\E(d_1(G',H))+\|W_H-W\|_1\le 2\|W_H-W\|_1.
\]
\end{proof}

As a useful consequence, we obtain the following fact. The Regularity
Lemma implies (see e.g. \cite{LSz2}) that functions in $\WW_0$ can be
approximated by functions of the form $W_G$, so that the number of
nodes of $G$ can be bounded uniformly if the error is measured in the
cut distance. Obviously, one cannot approximate all functions by
functions $W_G$ in the $L_1$-norm. But for every $n$ there is graph
on $n$ nodes that approximates $W$ so that the cut distance tends to
$0$ uniformly, and at the same time the approximation in the $L_1$
norm is almost as good as possible.

\begin{corollary}\label{COR:SQUARECLOSE}
Let $G$ be a simple graph and $U\in\WW_0$. Then there exists a simple
graph $G'$ such that
\[
\|W_{G'}-U\|_1\le 4\|W_G-U\|_1\qquad\text{and}\qquad
\delta_\square(W_{G'},U)\le \frac{50}{\sqrt{\log\log n}}.
\]
\end{corollary}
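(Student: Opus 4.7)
The plan is to produce $G'$ probabilistically. Fix a labeling of $V(G)$ by $[n]$, where $n=|V(G)|$, and let $G'$ be a single sample of the random graph $\Ge'(n,U)$ defined in Section~\ref{SEC:WRAND}. I will show that both required inequalities hold simultaneously with positive probability, giving a deterministic $G'$ meeting the corollary. The key observation is that Lemma~\ref{LEM:GPRIME} and Lemma~\ref{LEM:GPRIMEL1} both concern exactly the construction $\Ge'(n,U)$, one controlling $\delta_\square$ from $U$ and the other the $L_1$ distance from $U$ (given a reference graph $H$), so the two bounds can be combined by a union bound on one and the same random object.

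For the cut-distance half, I would simply invoke Lemma~\ref{LEM:GPRIME} with $W=U$, giving
\[
\Pr\Bigl(\delta_\square(W_{G'},U)\ge \tfrac{50}{\sqrt{\log\log n}}\Bigr)<\tfrac{1}{\sqrt{n}}.
\]
For the $L_1$ half, I would apply Lemma~\ref{LEM:GPRIMEL1} with $H=G$ and $W=U$, which yields $\E(\|W_{G'}-U\|_1)\le 2\|W_G-U\|_1$. Markov's inequality then gives
\[
\Pr\bigl(\|W_{G'}-U\|_1>4\|W_G-U\|_1\bigr)\le \tfrac{1}{2}.
\]

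A union bound shows that both bad events fail simultaneously with probability at least $\tfrac{1}{2}-\tfrac{1}{\sqrt{n}}$, which is strictly positive for all $n\ge 5$. Hence a $G'$ satisfying both bounds exists. For the handful of small values of $n$ (including those where $\log\log n$ is too small to make the stated inequality meaningful) the cut-distance bound $50/\sqrt{\log\log n}\ge 1$ is automatic, and taking $G'=G$ trivially satisfies the $L_1$-bound; so these cases present no problem.

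I do not expect a real obstacle: the two lemmas were designed precisely to give matched control over the cut-norm and $L_1$-norm behaviour of $\Ge'(n,U)$, and the whole argument is one probabilistic union bound. The only minor care needed is the bookkeeping to ensure that the failure probabilities from Lemma~\ref{LEM:GPRIME} and from Markov's inequality sum to less than $1$ for all relevant $n$, which is where the constant $4$ in front of $\|W_G-U\|_1$ comes from.
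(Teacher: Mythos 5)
Your argument is correct and follows essentially the same route as the paper's proof: sample $G'=\Ge'(n,U)$, use Lemma~\ref{LEM:GPRIMEL1} plus Markov to get the $L_1$ bound with probability at least $1/2$, use Lemma~\ref{LEM:GPRIME} for the cut-distance bound, and combine with a union bound. Your extra care with the small-$n$ cases is fine but not really needed, since (as you note) $50/\sqrt{\log\log n}\ge 1$ already trivializes the cut-distance bound for any $n$ for which the failure probability from Lemma~\ref{LEM:GPRIME} could conceivably exceed $1/2$.
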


\begin{proof}
Let $V(G)=[n]$. We are going to show that the random graph
$G'=\Ge'(n,W)$ satisfies the conditions with positive probability. By
Lemma \ref{LEM:GPRIMEL1}, $\E\bigl(d_1(W_G',W)\bigr) \le
2\|W_G-W\|_1$, and so with probability at least $1/2$,
\begin{equation}\label{EQ:WGEW}
d_1(W_{G'},W)\le 4\|W_G-W\|_1.
\end{equation}
On the other hand, Lemma \ref{LEM:GPRIME} implies that with
probability at least $3/4$, we have
\begin{equation}\label{EQ:DSGU}
\delta_\square(G',U) \le \frac{50}{\sqrt{\log\log n}}.
\end{equation}
So with positive probability, both \eqref{EQ:WGEW} and
\eqref{EQ:DSGU} hold.
\end{proof}

\subsection{Relating different norms}\label{RELATE-NORMS}

As pointed out in the introduction, the analytic problem behind
property testing is to relate the cut norm and the $L_1$ norm. In
this section, which contains our main technical tools, we study this
connection. Some of the lemmas below are purely analytic in nature,
and may be of interest on their own right.

\begin{lemma}\label{fura1}
Suppose that $\|U_n-U\|_\square\to 0$ and $\|W_n-W\|_\square\to 0$ as
$n\to\infty$ ($U,W,U_n,W_n\in\WW_0$). Then
\[
\liminf_{n\to\infty} \|W_n-U_n\|_1\ge \|W-U\|_1.
\]
\end{lemma}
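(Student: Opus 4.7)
The plan is to exploit the variational characterization
\[
\|W-U\|_1 = \int_{[0,1]^2} \phi\,(W-U)\,dx\,dy
\]
with $\phi = \sign(W-U)$, and then use Lemma \ref{WSTAR} to push $\phi$ through the cut-norm convergence. More precisely, the argument splits the sign into its positive and negative parts, applies Lemma \ref{WSTAR} separately to each, and then uses the trivial duality $\|f\|_1 \ge \int \psi f$ for any $\psi$ with $|\psi|\le 1$.

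In detail, I would first set $\phi_+ = \one_{\{W>U\}}$ and $\phi_- = \one_{\{U>W\}}$, so that $\phi := \phi_+ - \phi_-$ satisfies $|\phi|\le 1$ and $\phi(W-U) = |W-U|$ pointwise. Since $W$ and $U$ are symmetric, $\phi_+$ and $\phi_-$ are symmetric $\{0,1\}$-valued functions in $\WW_0$, so Lemma \ref{WSTAR} applies to each with $Z=\phi_+$ and with $Z=\phi_-$. This yields
\[
\int \phi_\pm W_n \to \int \phi_\pm W,\qquad \int \phi_\pm U_n \to \int \phi_\pm U,
\]
and combining these four limits gives $\int \phi(W_n-U_n) \to \int \phi(W-U) = \|W-U\|_1$.

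The final step is the easy bound
\[
\|W_n-U_n\|_1 \;\ge\; \int \phi\,(W_n-U_n)\,dx\,dy,
\]
valid because $|\phi|\le 1$. Taking $\liminf_n$ on the left and using the limit computed in the previous paragraph on the right gives $\liminf_n \|W_n-U_n\|_1 \ge \|W-U\|_1$, as required.

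No serious obstacle is anticipated; the only thing to be careful about is that Lemma \ref{WSTAR} is applied to functions ($\phi_+$, $\phi_-$) that belong to $\WW_0$, which is why I split $\phi$ rather than try to use $\sign(W-U)$ directly (which is $\{-1,0,1\}$-valued and hence not in $\WW_0$). Conceptually, the statement is simply the weak lower semicontinuity of the $L_1$ norm under the weak-type convergence implied by the cut norm, and Lemma \ref{WSTAR} is precisely what encodes that weak convergence in our setting.
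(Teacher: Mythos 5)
Your proof is correct and follows essentially the same route as the paper: exhibit $\|W-U\|_1$ as $\int\phi\,(W-U)$ for a bounded $\phi$ satisfying $\phi\,(W-U)=|W-U|$ pointwise, use Lemma~\ref{WSTAR} to pass to the limit, and bound $\|W_n-U_n\|_1\ge\int\phi\,(W_n-U_n)$. The paper applies Lemma~\ref{WSTAR} directly with $Z=\sigma:=\sign(U-W)$, which takes values in $\{-1,0,1\}$ and so lies outside $\WW_0$ as the lemma is literally stated; your decomposition $\phi=\phi_+-\phi_-$ with $\phi_\pm=\one_{\{W>U\}},\one_{\{U>W\}}\in\WW_0$ is a minor but tidy refinement that stays strictly within the lemma's stated hypotheses (though the proof of Lemma~\ref{WSTAR} handles any bounded integrable $Z$ in any case).
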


\begin{proof}
Let $\sigma(x,y)=\sign(U(x,y)-W(x,u))$. Then
\begin{align*}
\|U_n-W_n\|_1 & \ge  \int_{[0,1]^2}
\sigma(x,y)(U_n(x,y)-W_n(x,y))\,dx\,dy\\
&\longrightarrow\int_{[0,1]^2} \sigma(x,y)(U(x,y)-W(x,y))\,dx\,dy
=\|U-W\|_1
\end{align*}
by Lemma \ref{WSTAR}.
\end{proof}

There are easy examples showing that $\|W_n-U_n\|_1\to \|W-U\|_1$
does not hold in general: for example, let $W_n=W_{\Ge(n,1/2)}$ and
$W=U=U_n=1/2$. But we can formulate two statements that provide
partial converses to this lemma.

\begin{lemma}\label{REV-CONV}
Suppose that $\|U_n-U\|_\square\to 0$ and $\|W_n-W\|_\square\to 0$ as
$n\to\infty$ ($U,W,U_n,W_n\in\WW_0$). Suppose further that $U$ is
$0-1$ valued. Then
\[
\|U_n-W_n\|_1 \to \|U-W\|_1.
\]
\end{lemma}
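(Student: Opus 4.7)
The plan is to establish the reverse inequality to Lemma \ref{fura1}, namely $\limsup_n \|U_n-W_n\|_1 \le \|U-W\|_1$, and then combine the two bounds. The key input is that $U$ being $0$-$1$ valued lets us split the domain into two pieces on which $U$ is constant, and on each piece we can replace $U_n$ by its target constant value in $L^1$ at vanishing cost, using Lemma \ref{WSTAR}.

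Concretely, let $A = \{(x,y) \in [0,1]^2 : U(x,y) = 1\}$ and $B = [0,1]^2 \setminus A$. Then
\[
\|U-W\|_1 = \int_A (1-W)\,dx\,dy + \int_B W\,dx\,dy,
\]
since $|U-W| = 1-W$ on $A$ and $|U-W| = W$ on $B$. The first step is to show that $U_n \to 1$ in $L^1(A)$ and $U_n \to 0$ in $L^1(B)$. Since $0 \le U_n \le 1$, we have $\int_A (1-U_n) \ge 0$ and $\int_B U_n \ge 0$, and Lemma \ref{WSTAR} (applied to $Z = \mathbf{1}_A$ and $Z = \mathbf{1}_B$) gives $\int_A U_n \to \int_A U = |A|$ and $\int_B U_n \to \int_B U = 0$, which is exactly what is needed.

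The second step is the triangle-inequality split. On $A$, write
\[
|U_n - W_n| \le (1 - U_n) + (1 - W_n),
\]
and on $B$, write
\[
|U_n - W_n| \le U_n + W_n.
\]
Integrating, applying Lemma \ref{WSTAR} once more (now to $1-W_n$ and $W_n$ against $\mathbf{1}_A$ and $\mathbf{1}_B$), and using the $L^1$-convergence of $U_n$ from the first step, we get
\[
\limsup_{n\to\infty}\int_A |U_n - W_n| \le \int_A (1-W) \quad\text{and}\quad \limsup_{n\to\infty}\int_B |U_n - W_n| \le \int_B W.
\]
Adding these yields $\limsup_n \|U_n - W_n\|_1 \le \|U-W\|_1$, which together with Lemma \ref{fura1} proves the claim.

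I do not anticipate a serious obstacle: the proof is essentially two applications of Lemma \ref{WSTAR} plus the pointwise triangle inequality, and the $\{0,1\}$-valuedness of $U$ is used in exactly the way one would expect (it makes the crude upper bounds $1-U_n$ and $U_n$ tight in the limit). The only thing to be careful about is making sure the upper bound on $|U_n - W_n|$ used on each piece is actually an equality in the limit, which it is precisely because $U$ takes only the extremal values $0$ and $1$ in $\WW_0$.
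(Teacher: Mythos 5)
Your proof is correct and takes essentially the same approach as the paper's: your pointwise bounds $|U_n-W_n|\le(1-U_n)+(1-W_n)$ on $\{U=1\}$ and $|U_n-W_n|\le U_n+W_n$ on $\{U=0\}$ are exactly the triangle inequality $|U_n-W_n|\le|U_n-U|+|U-W_n|$ rewritten using the $0$-$1$ values of $U$, which is what the paper uses (in the integrated form $\|U_n-W_n\|_1\le\|U_n-U\|_1+\|U-W_n\|_1$), followed by the same two applications of Lemma \ref{WSTAR} and the same appeal to Lemma \ref{fura1} for the lower bound.
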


\begin{proof}
By Lemma \ref{WSTAR},
\begin{align*}
\|U_n-W_n\|_1&\le \|U_n-U\|_1+\|U-W_n\|_1\\
&=\int_{U=0} U_n +\int_{U=1} (1-U_n)+ \int_{U=0} W_n +\int_{U=1} (1-W_n)\\
&\to\int_{U=0} W +\int_{U=1} (1-W)=\|U-W\|_1\,.
\end{align*}
Combined with Lemma \ref{fura1}, the assertion follows.
\end{proof}

\begin{lemma}\label{REV-CONV-1}
Suppose that $\|U_n-U\|_\square\to 0$ as $n\to\infty$
($U,U_n\in\WW_0$). Then for every $W\in\WW_0$ there is a sequence of
functions $W_n\in \WW_0$ such that $\|W_n-W\|_\square\to 0$ and
\[
\|U_n-W_n\|_1 \to \|U-W\|_1.
\]
\end{lemma}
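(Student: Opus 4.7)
The plan is to construct $W_n$ explicitly in the form $W_n := W + Z \cdot (U_n - U)$, where $Z \in \WW_0$ is a pointwise multiplier depending on $U$ and $W$ (but independent of $n$). This ansatz ensures $W_n - W = Z (U_n - U)$, so $\|W_n - W\|_\square \to 0$ follows at once from Lemma~\ref{WSTAR} applied to the bounded function $Z$. The multiplier will be chosen so that (i) $W_n$ remains in $[0,1]$ automatically, and (ii) the pointwise identity $|U_n - W_n| - |U - W| = \eta \cdot (U_n - U)$ holds for a bounded $\eta$, so that integrating and applying Lemma~\ref{WSTAR} once more yields $\|U_n - W_n\|_1 \to \|U - W\|_1$.

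The explicit recipe partitions $[0,1]^2$ into the sets $\{W \le U\}$ and $\{W > U\}$. On $\{W \le U\}$, set $Z := W/U$ (with the convention $Z := 0$ when $U = 0$, in which case $W = 0$ as well). Then $W_n = (W/U) U_n$ lies in $[0, W/U] \subseteq [0,1]$, one has $U_n - W_n = (1 - Z) U_n \ge 0$, and so $|U_n - W_n| = (1 - Z) U_n$ differs from $|U - W| = (1 - Z) U$ exactly by $(1 - Z)(U_n - U)$. On $\{W > U\}$, set $Z := (1 - W)/(1 - U)$ (the denominator is positive since $W \le 1$ forces $U < 1$ on this set). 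Then $1 - W_n = Z (1 - U_n)$ lies in $[0, 1]$, so $W_n \in [0, 1]$; moreover $W_n - U_n = (1 - Z)(1 - U_n) \ge 0$, and $|U_n - W_n| = (1 - Z)(1 - U_n)$ differs from $|U - W| = (1 - Z)(1 - U)$ by $-(1 - Z)(U_n - U)$. A direct check confirms that on both regions the uniform formula $W_n - W = Z (U_n - U)$ holds, with $Z \in [0,1]$ everywhere.

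Combining the two pieces gives $|U_n - W_n| - |U - W| = \eta \cdot (U_n - U)$ with $\eta := (1 - Z)\bigl(\one_{\{W \le U\}} - \one_{\{W > U\}}\bigr) \in [-1, 1]$. Integrating over $[0,1]^2$ and invoking Lemma~\ref{WSTAR} (whose proof works for any bounded measurable weight, by splitting into positive and negative parts) shows $\int \eta (U_n - U) \to 0$, so $\|U_n - W_n\|_1 \to \|U - W\|_1$. Combined with $\|W_n - W\|_\square = \|Z (U_n - U)\|_\square \to 0$ (again by Lemma~\ref{WSTAR}), this completes the proof.

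The main obstacle lies in choosing the right $Z$. The naive additive choice $W_n = W + (U_n - U)$ would trivially give $\|U_n - W_n\|_1 = \|U - W\|_1$, but can take values outside $[0,1]$, and simply truncating to $[0,1]$ fails because the truncation set may be large in Lebesgue measure even though $\|U_n - U\|_\square$ is small (witnessed by the $W$-random-graph-style example noted immediately after Lemma~\ref{fura1}). The multiplicative rescaling by $Z \in \WW_0$ --- equal to $W/U$ when $W \le U$ and to $(1-W)/(1-U)$ when $W > U$ --- sidesteps this by attenuating the correction $(U_n - U)$ just enough to keep $W_n$ in $[0,1]$ without any truncation, while remaining a bounded multiplier to which Lemma~\ref{WSTAR} applies.
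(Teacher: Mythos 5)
Your proof is correct, and it rests on the same core device as the paper's — a multiplicative rescaling of the form $W_n = ZU_n$ (or its $1-{}$ mirror) to keep $W_n$ in $[0,1]$ while making $W_n - W = Z(U_n - U)$ a bounded multiple of $U_n - U$. The organization, however, is genuinely different. The paper first treats the two ordered cases $U\ge W$ and $U\le W$ separately, then reduces the general case to these two by passing through $V = \max(U,W)$ and a triangle-inequality argument, and finally combines a $\limsup$ bound with Lemma~\ref{fura1} to upgrade to a limit. You instead build a single piecewise multiplier $Z$ (equal to $W/U$ on $\{W\le U\}$ and $(1-W)/(1-U)$ on $\{W>U\}$), observe that $W_n := W + Z(U_n-U)$ stays in $[0,1]$ automatically, and track the sign of $U_n - W_n$ on each region to reduce $\|U_n-W_n\|_1 - \|U-W\|_1$ to an integral $\int \eta\,(U_n-U)$ against a bounded symmetric weight $\eta \in [-1,1]$, which tends to $0$ by Lemma~\ref{WSTAR} applied to $\eta^\pm$. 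This buys you three things: it collapses the three-stage case analysis into one construction; it delivers exact convergence directly, without invoking Lemma~\ref{fura1}; and it routes the $L_1$ estimate through precisely the integral statements that Lemma~\ref{WSTAR} actually guarantees (the paper's line ``$\|Z(U-U_n)\|_1 \to \|U-W\|_1$'' is a slight overreach — cut-norm convergence does not give $L_1$-smallness of $Z(U-U_n)$ in general — and what is really needed, and what you use, is the convergence of $\int Z(U_n-U)$, which is exactly what Lemma~\ref{WSTAR} provides).
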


\begin{proof}
First we consider the case when $U\ge W$. Let
\[
Z(x,y)=
  \begin{cases}
   W(x,y)/U(x,y)  & \text{if $U(x,y)>0$}, \\
    0 & \text{otherwise}.
  \end{cases}
\]
and $W_n=Z U_n$. Trivially $W_n\in\WW_0$, $W=ZU$, and
\[
\|W-W_n\|_\square = \|Z(U-U_n)\|_\square \to 0
\]
by Lemma \ref{WSTAR}. Furthermore,
\[
\|U_n-W_n\|_1 = \|(U-W) + Z(U-U_n)\|_1 \le \|U-W\|_1 +
\|Z(U-U_n)\|_1\to \|U-W\|_1
\]
(using Lemma \ref{WSTAR} again). Combining this with Lemma
\ref{fura1} we get that $\|U_n-W_n\|_1 \to \|U-W\|_1$.

The case when $U\le W$ follows by a similar argument, replacing
$U,W,\dots$ by $1-U, 1-W,\dots$.

Finally, in the general case, consider the function $V=\max(U,W)$.
Then clearly $\|U-V\|_1+\|V-W\|_1 = \|U-W\|_1$. Since $U\le V$, there
exists a sequence $(V_n)$ of functions such that
$\|V_n-V\|_\square\to 0$ and $\|V_n-U_n\|_1\to \|V-U\|_1$. Since
$V\ge W$, there is a sequence $(W_n)$ of functions such that
$\|W_n-W\|_\square\to 0$ and $\|W_n-V_n\|_1\to \|W-V\|_1$. Hence
\begin{align*}
\limsup_{n\to\infty} \|U_n-W_n\|_1 &\le\limsup_{n\to\infty}
\|U_n-V_n\|_1 +\limsup_{n\to\infty} \|V_n-W_n\|_1 \\
&= \|U-V\|_1 + \|V-W\|_1 = \|U-W\|_1.
\end{align*}
Using Lemma \ref{fura1} again, the lemma follows.
\end{proof}

A fact similar to Lemma \ref{fura1} holds for the distances
$\delta_1$ and $\delta_\square$ replacing the norms $\|.\|_1$ and
$\|.\|_\square$. This does not seem to follow directly from Lemma
\ref{fura1}, and the proof is more involved.

\begin{lemma}\label{fura}
Suppose that $\delta_\square(U_n,U)\to 0$ and
$\delta_\square(W_n,W)\to 0$ as $n\to\infty$ $(U,W,U_n,W_n\in\WW_0)$.
Then
\begin{equation}\label{EQ:SQU1}
\liminf \delta_1(W_n,U_n)\ge \delta_1(W,U).
\end{equation}
\end{lemma}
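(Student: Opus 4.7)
The strategy is to reduce the claim to Lemma \ref{fura1} by coupling the two sequences through a common measure-preserving reparametrization. Write $\ell=\liminf_n \delta_1(W_n,U_n)$ and pass to a subsequence on which $\delta_1(W_n,U_n)\to\ell$. For each $n$, pick an invertible measure-preserving map $\phi_n$ with $\|W_n-U_n^{\phi_n}\|_1\le \delta_1(W_n,U_n)+1/n$, so that $\|W_n-U_n^{\phi_n}\|_1\to\ell$. Reparametrizing $U_n$ by $\phi_n$ does not change its isomorphism class, hence $\delta_\square(U_n^{\phi_n},U)=\delta_\square(U_n,U)\to 0$, while $\delta_\square(W_n,W)\to 0$ by hypothesis. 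What is missing to apply Lemma \ref{fura1} to the pair of sequences $(W_n)$ and $(U_n^{\phi_n})$ is cut-\emph{norm} convergence, not merely $\delta_\square$-convergence.

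The main technical step is to produce a single invertible measure-preserving $\psi_n$ that $\|\cdot\|_\square$-aligns both sequences simultaneously. I would show that after passing to a further subsequence there exist such $\psi_n$ and graphons $W^*\cong W$ and $U^*\cong U$ with
\[
\|W_n^{\psi_n}-W^*\|_\square\to 0 \qquad\text{and}\qquad \|U_n^{\phi_n\circ\psi_n}-U^*\|_\square\to 0.
\]
This is a joint compactness statement for pairs of graphons: the space $\WW_0\times\WW_0$ modulo \emph{simultaneous} measure-preserving transformations, equipped with the joint cut metric $\inf_\phi\bigl(\|X_1-X_2^\phi\|_\square+\|Y_1-Y_2^\phi\|_\square\bigr)$, is compact. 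It is proved by the same weak Szemer\'edi regularity argument as for scalar graphons in \cite{LSz2}, applied to $(W_n,U_n^{\phi_n})$ viewed as a symmetric function into $[0,1]^2$, so that a common partition of $[0,1]$ approximates both components in cut norm at once. The resulting joint subsequential limit must have the form $(W^*,U^*)$ with $W^*\cong W$ and $U^*\cong U$ because each coordinate's individual $\delta_\square$-limit is prescribed.

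With the alignment $\psi_n$ in hand, Lemma \ref{fura1} applied to the cut-norm-convergent sequences $W_n^{\psi_n}\to W^*$ and $U_n^{\phi_n\psi_n}\to U^*$ yields
\[
\liminf_n \|W_n^{\psi_n}-U_n^{\phi_n\psi_n}\|_1 \;\ge\; \|W^*-U^*\|_1.
\]
The $L_1$ norm is invariant under applying the same invertible measure-preserving map to both arguments, so the left side equals $\lim_n \|W_n-U_n^{\phi_n}\|_1 = \ell$. The right side is at least $\delta_1(W^*,U^*)$ (take the identity as the competing map in the infimum defining $\delta_1$), and this in turn equals $\delta_1(W,U)$ because $\delta_1$ descends to isomorphism classes: $\delta_1(A,A')=0$ whenever $A\cong A'$, whence the triangle inequality gives independence of the chosen representatives. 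Therefore $\ell\ge\delta_1(W,U)$, proving \eqref{EQ:SQU1}. The primary obstacle is the joint compactness: a naive iterative scheme that first chooses $\psi_n$ to cut-norm-align $W_n$ to $W$ and then composes with a second transformation $\tau_n$ to align $U_n^{\phi_n\psi_n}$ inevitably breaks the first alignment, so one genuinely needs pairs-compactness rather than two applications of the scalar compactness theorem.
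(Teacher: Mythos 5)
Your proposal is correct in outline but takes a genuinely different route from the paper. You reduce Lemma~\ref{fura} to Lemma~\ref{fura1} by first choosing near-optimal alignments $\phi_n$ for $\delta_1(W_n,U_n)$ and then invoking a \emph{joint} compactness theorem --- that pairs of graphons modulo \emph{simultaneous} measure-preserving maps, with the metric $\inf_\phi\bigl(\|X_1-X_2^\phi\|_\square+\|Y_1-Y_2^\phi\|_\square\bigr)$, form a compact space --- to obtain a common cut-norm alignment $\psi_n$ of both sequences. That compactness statement is indeed true (it follows from a vector-valued weak regularity lemma, where a single partition simultaneously approximates both coordinates, exactly as in the compactness proof of \cite{LSz2}); you correctly observe that one cannot simply apply the scalar compactness twice, since the second alignment would destroy the first. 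Given the joint alignment, your downstream chain --- identifying the limits as $W^*\cong W$, $V^*\cong U$ by uniqueness of $\delta_\square$-limits, applying Lemma~\ref{fura1}, using invariance of $\|\cdot\|_1$ under simultaneous measure-preserving maps, and descending $\delta_1$ to isomorphism classes --- is sound.

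The paper avoids any appeal to compactness or to Lemma~\ref{fura1}. After the implicit WLOG replacing $\delta_\square$-convergence by cut-norm convergence (permissible because applying invertible measure-preserving maps to $W_n$ and $U_n$ separately changes neither the hypotheses nor $\delta_1(W_n,U_n)$), it approximates $W$ and $U$ in $L_1$ by stepfunctions $W',U'$ on a common $k$-part partition $\{S_i\}$, picks near-optimal $\rho,\pi$ achieving $\|W_n^\rho-U_n^\pi\|_1\approx\delta_1(W_n,U_n)$, and decomposes this $L_1$ norm cell by cell over the common refinement $S_i^\rho\cap S_j^\pi$. Because each cell is a rectangle, the cut norm bound $\|W_n-W\|_\square\le\eps/k^4$ lets one replace $W_n^\rho$ by $W^\rho$ (and $U_n^\pi$ by $U^\pi$) with total error $O(\eps)$; then the $L_1$-proximity of $W',U'$ and the fact that both $W'^\rho$ and $U'^\pi$ are constant on cells let one read off $\|W'^\rho-U'^\pi\|_1$, yielding $\delta_1(W,U)\le d+O(\eps)$. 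This is a fully elementary argument using only $L_1$-density of stepfunctions and the rectangle-integral characterization of the cut norm, with no regularity lemma and no compactness. Your approach is conceptually cleaner (it literally says ``align, then invoke Lemma~\ref{fura1}''), but buys this at the cost of a nontrivial joint-compactness ingredient not established in the paper, whereas the paper's proof is self-contained within the tools it has already introduced.
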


\begin{proof}
Let $d$ denote the lim inf on the left hand side of \eqref{EQ:SQU1}.
Let $\eps$ be an arbitrary positive number. There is a number $k$
(depending on $U$, $W$ and $\eps$), a partition $[0,1]=\cup_{i=1}^k
S_i$ of the unit interval into $k$ measurable pieces and two
functions $W',U'\in\WW_0$ such that both $W'$ and $U'$ are constant
on every rectangle $S_i\times S_j$ and furthermore that $\|W-W'\|_1,
~ \|U-U'\|_1\leq\eps$. Let $n$ be a natural number such that $\|
W_n-W\|_\square ~,~ \| U_n-U\|_\square\leq\eps/k^4$ and
$\delta_1(W_n,U_n)\le d+\eps$. There are measure preserving
transformations $\rho,\pi:[0,1]\mapsto [0,1]$ such that
$\|W_n^{\rho}-U_n^{\pi}\|_1\leq d+2\eps$. Let $S_{i,j}$ denote the
set $S_i^{\rho}\cap S_j^{\pi}$ for every $1\leq i,j\leq k$. It is
clear that $S_{i,j}$ is a partition of the unit interval and that
both ${W'}^{\rho}$ and ${U'}^{\pi}$ are constant on each rectangle
$S_{i_1,j_1}\times S_{i_2,j_2}$. We have that
\[
d+2\eps\ge \| W_n^{\rho}-U_n^{\pi}\|_1\ge \sum_{1\leq
i_1,j_1,i_2,j_2\leq k}\Bigl|\int_{S_{i_1,j_1} \times
S_{i_2,j_2}}(W_n^{\rho}-U_n^{\pi})\Bigr|.
\]
Using that
\[
\Bigl|\int_{S_{i_1,j_1}\times S_{i_2,j_2}} (W^{\rho}-W_n^{\rho})
\Bigr|\le \frac{\eps}{k^4}
~~\text{and}~~\Bigl|\int_{S_{i_1,j_1}\times S_{i_2,j_2}}
(U^{\pi}-U_n^{\pi})\Bigr|\le \frac{\eps}{k^4},
\]
we get that
\[
\sum_{1\leq i_1,j_1,i_2,j_2\leq k}\Bigl|\int_{S_{i_1,j_1}\times
S_{i_2,j_2}}(W^{\rho}-U^{\pi})\Bigr|\le d+4\eps.
\]
Writing
$W^{\rho}-U^{\pi}=(W^{\rho}-{W'}^{\rho})+({W'}^{\rho}-U'^{\pi})
+(U'^{\pi}-U^{\pi})$ we obtain that
\begin{align*}
\sum_{1\leq i_1,j_1,i_2,j_2\leq k}&\Bigl|\int_{S_{i_1,j_1}\times
S_{i_2,j_2}}(W^{\rho}-U^{\pi})\Bigr|\geq \sum_{1\leq
i_1,j_1,i_2,j_2\leq k}\Bigl(\Bigl|\int_{S_{i_1,j_1}\times
S_{i_2,j_2}} ({W'}^{\rho}-{U'}^{\pi})\Bigl|\\&-
\Bigl|\int_{S_{i_1,j_1}\times S_{i_2,j_2}}
(W^{\rho}-{W'}^{\rho})\Bigr|-\Bigl|\int_{S_{i_1,j_1}\times
S_{i_2,j_2}} (U^{\pi}-{U'}^{\pi})\Bigr|\Bigr).
\end{align*}
This implies that
\[
\sum_{1\leq i_1,j_1,i_2,j_2\leq k}\Bigl|\int_{S_{i_1,j_1}\times
S_{i_2,j_2}}(W^{\rho}-U^{\pi})\Bigr|\ge \sum_{1\leq
i_1,j_1,i_2,j_2\leq k}\Bigl|\int_{S_{i_1,j_1}\times S_{i_2,j_2}}
({W'}^{\rho}-{U'}^{\pi})\Bigl|-2\eps.
\]
Using that both ${W'}^{\rho}$ and ${U'}^{\pi}$ are constant on the
sets $S_{i_1,j_1}\times S_{i_2,j_2}$ we obtain that the right side of
the above inequality is equal to
$\|{W'}^{\rho}-{U'}^{\pi}\|_1-2\eps$. Consequently
\[
\| {W'}^{\rho}-U'^{\pi}\|_1\leq d+6\eps.
\]
Using that $\| W^{\rho}-{W'}^{\rho}\|_1,~\|U^{\pi}-{U'}^{\pi}\|_1\le
\eps$ we get that
\[
\|W^{\rho}-U^{\pi}\|_1\leq d+10\eps.
\]
Since $\eps>0$ is arbitrary, \eqref{EQ:SQU1} follows.
\end{proof}

\section{Main results}

\subsection{Property testing for functions}

We define a notion of testability for properties of functions in
$\WW_0$ and for graphons. Formally, a {\it function property} is a
subset $\RR\subseteq\WW_0$; a {\it graphon property} is a function
property that is invariant under isomorphism. A function property is
{\it closed} if it is closed in the $\|.\|_\square$ norm.

\begin{example}\label{EXA:01}
The function property of being 0-1 valued is a graphon property by
the characterization of isomorphism, but it is not closed, since for
a sequence of random graphs $G_n$ with edge probability $1/2$, the
functions $W_{G_n}$ are 0-1 valued, but their limit in the
$\|.\|_\square$ norm, namely the identically $1/2$ function, is not.
\end{example}

The following definition of testability of a function property is
analogous to the testability of a graph property. The framework is
that we study a function $W\in\WW_0$ by observing a $W$-random
graph $\Ge(k,W)$.

\begin{definition}
A function property $\RR$ is {\it testable} if there is a graph
property $\RR'$ (called a {\it test property} for $\RR$) such that

\smallskip

(a) $\Pr(\Ge(k,W)\in\RR') \ge 2/3$ for every function $W\in\RR$ and
every $k\ge 1$, and

\smallskip

(b) for every $\eps>0$ there is a $k_\eps\ge 1$ such that
$\Pr(\Ge(k,W)\in\RR') \le 1/3$ for every $k\ge k_\eps$ and every
function $W\in\WW_0$ with $d_1(W,\RR)\ge \eps$.
\end{definition}

Similarly as for graph properties, the constants $1/3$ and $2/3$ are
arbitrary, but it would not change the property if we replaced them
by any two real numbers $0<a<b<1$:

\begin{lemma}\label{CONSTANTS}
Let $0<a<b<1$. A graphon property $\RR$ is testable if and only if
there is a graph property $\RR''$ such that for every $\eps>0$ there
is a constant $k(\eps)$ such that for every function $W\in\WW_0$ and
$k\ge k(\eps)$,

\smallskip

(a) if $W\in\RR$, then $\Pr(\Ge(k,W)\in\RR'') \ge b$, and

\smallskip

(b) if $d_1(W,\RR)>\eps$ then $\Pr(\Ge(k,W)\in\RR'') \le a$.
\end{lemma}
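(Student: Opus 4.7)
Both directions are proved by majority-vote amplification based on two facts:
(i) for $\ell\le k$, $\Ge(\ell,\Ge(k,W))$ and $\Ge(\ell,W)$ have the same distribution (conditional on the random $\ell$-subset, the corresponding $\ell$ underlying uniform points are themselves iid);
(ii) for any graph property $\SS$ and any $\ell(n)=o(\sqrt{n})$, the statistic $Y_k(H):=\Pr(\Ge(\ell(k),H)\in\SS\mid H)$ evaluated at $H=\Ge(k,W)$ is a function of $k$ iid uniform points and $\binom{k}{2}$ edge coin-flips, with bounded differences $\ell(k)/k$ per point and $(\ell(k)/k)^2$ per coin-flip. McDiarmid gives
\[
\Pr\bigl(|Y_k-\E Y_k|>t\bigr)\le 2\exp\bigl(-\Omega(t^2 k/\ell(k)^2)\bigr),
\]
and by (i), $\E Y_k=\Pr(\Ge(\ell(k),W)\in\SS)$.

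For \emph{necessity}, start from a standard test property $\RR'$ and set $\RR''=\{H:Y_{|V(H)|}(H)>1/2\}$ with $\SS=\RR'$. Then $\E Y_k\ge 2/3$ for $W\in\RR$, while for $d_1(W,\RR)\ge\eps$ and $\ell(k)\ge k_\eps$ we have $\E Y_k\le 1/3$; concentration pushes $\Pr(\Ge(k,W)\in\RR'')$ past $b$ or below $a$ once $k$ is large. For \emph{sufficiency}, starting from $\RR''$ with constants $a<b$ and threshold $k(\cdot)$, set
\[
\RR'=\{H:|V(H)|<N_0\}\,\cup\,\{H:|V(H)|\ge N_0,\ Y_{|V(H)|}(H)>\tfrac{a+b}{2}\}
\]
with $\SS=\RR''$, and pick $N_0$ so that $\ell(N_0)\ge k(1)$ and the McDiarmid bound at $t=(b-a)/2$ is $\le 1/3$ for every $k\ge N_0$. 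The small-graph clause gives the standard condition~(a) trivially for $k<N_0$; for $k\ge N_0$ the mean is $\ge b$ and concentration yields $\Pr(\Ge(k,W)\in\RR')\ge 2/3$. For standard~(b), given $\eps$ take $k_\eps\ge N_0$ large enough that $\ell(k_\eps)\ge k(\eps)$, so that $\E Y_k\le a$ and the same concentration bound caps $\Pr(\Ge(k,W)\in\RR')$ at $1/3$.

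The one genuinely subtle point is that condition~(a) of the standard definition demands its lower bound for \emph{every} $k\ge 1$, while the $(a,b)$-hypothesis provides it only past $k(\eps)$; the ``$|V(H)|<N_0$'' clause handles this cleanly and does no harm to (b), since the standard definition allows $k_\eps$ to depend on $\eps$ and may be taken $\ge N_0$. Isomorphism invariance of $\RR''$ and $\RR'$ is automatic since each depends only on $|V(H)|$ and the isomorphism class of $H$; verifying the bounded differences and the sub-sampling identity (i) is routine.
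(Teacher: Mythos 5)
Your proof is correct and takes essentially the same route as the paper: majority-vote amplification, where the new test property thresholds the fraction of subsamples landing in the old test property, combined with a concentration inequality (you use McDiarmid with bounded differences; the paper uses Azuma with a vertex-revealing martingale---these are interchangeable here). The extra care in your write-up, namely the growing subsample size $\ell(k)$ and the small-graph clause in the sufficiency direction, addresses details the paper leaves implicit when it says the converse and part (b) are analogous.
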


\begin{proof}
Suppose that $\RR$ is testable, and let $\RR'$ be the graph property
in the definition of testability. For every simple graph $F$ and
$k\le |V(F)|$, define
\[
q_k(F)=\Pr(\Ge(k,F)\in\RR').
\]
Define the graph property
\[
\RR''=\{G:~q_k(F) \ge \frac12\}.
\]
Let $W\in\RR$ and let $n>k$ be large enough. Then
\[
\E\bigl(q_k(\Ge(n,W))\bigr)=\Pr(\Ge(k,W)\in\RR') \ge \frac23.
\]
We use Azuma's inequality to show that $q_k(\Ge(n,W))$ is highly
concentrated. Indeed, let $X_1,\dots,X_n\in[0,1]$ be independent
uniform samples, and let $Z_t=\E_t\bigl(q_k(\Ge(n,W))\bigr)$, where
$E_t$ means conditional expectation with respect to the choice of
$X_1,\dots,X_t$ and the randomization of the edges between nodes
$\{1,\dots,t\}$. Then $Z_0,\dots,Z_n$ is a martingale, with
expectation $\E(Z_0)=\E\bigl(q_k(\Ge(n,W))\bigr)\ge 2/3$.
Furthermore, $|Z_{t+1}-Z_t|$ is bounded by the probability that a
random $k$-subset of $\{1,\dots,n\}$ contains $t+1$, which is $k/n$.
Thus by Azuma's Inequality,
\[
\Pr\Bigl(q_k(\Ge(n,W)) \le \frac12\Bigr)\le \Pr\Bigl(q_k(\Ge(n,W))\le
\E(q_k(\Ge(n,W)))-\frac16\Bigr) \le \exp\Bigl(-\frac{n}{72k^2}\Bigr).
\]
Choosing $n$ large enough, this probability will be less than $1-b$,
and hence
\[
\Pr\bigl(\Ge(n,W)\in\RR''\bigr) >b.
\]
So (a) is satisfied. The proof of (b) is analogous, and so is the
proof of the converse.
\end{proof}

It follows from the definition that a graphon property is testable if
and only if its closure in the $\|.\|_1$ norm is testable.
Furthermore, the closure in the $\|.\|_\square$ norm of a testable
property is testable (but not the other way around, see example
\ref{TEST-FN-EX0}(d) below). It follows from Theorem
\ref{PROPTEST-FN} below (but it is not hard to see directly too) that
if $\RR$ is testable, then its closures in the $\|.\|_1$ norm and
$\|.\|_\square$ norm coincide.

Since the distribution of $\Ge(k,W)$ is preserved under isomorphism
of $W$, every closed testable function property is a graphon
property. In our applications to graph theory only closed graphon
properties will play a role, and we'll focus on characterizing
testability for such properties.

It is trivial that $B_1(\RR,\eps)\subseteq B_\square(\RR,\eps)$ for
every $\RR\subseteq\WW$ and $\eps>0$. A reverse containment
characterizes testable graphon properties.

\begin{theorem}\label{PROPTEST-FN}
A graphon property $\RR$ is testable if and only if for every
$\eps>0$ there is an $\eps'>0$ such that $B_{\square}(\RR,\eps')
\subseteq B_1(\RR,\eps)$.
\end{theorem}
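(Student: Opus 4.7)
The plan is to prove both implications. For the forward direction (testability $\Rightarrow$ inclusion), I argue by contrapositive: if for some $\eps>0$ no $\eps'$ works, then there is a sequence $W_n\in\WW_0$ with $d_\square(W_n,\RR)\to 0$ but $d_1(W_n,\RR)\ge\eps$; since $d_\square(W_n,\RR)=\inf_{U\in\RR}\|W_n-U\|_\square$, I can pick $U_n\in\RR$ with $\|W_n-U_n\|_\square\to 0$. Summing the Lipschitz estimate \eqref{kozel-fn} over all graphs in $\FF_k$ yields
\[
\bigl|\Pr(\Ge(k,W)\in\RR')-\Pr(\Ge(k,U)\in\RR')\bigr|\le 2^{\binom{k}{2}}\binom{k}{2}\,\|W-U\|_\square
\]
for any test property $\RR'$ and any $W,U\in\WW_0$. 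Fixing $k=k_\eps$, the right-hand side tends to $0$ along $n$, contradicting $\Pr(\Ge(k_\eps,U_n)\in\RR')\ge 2/3$ versus $\Pr(\Ge(k_\eps,W_n)\in\RR')\le 1/3$.

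For the reverse direction, I construct a single test property and invoke Lemma \ref{CONSTANTS}. Define
\[
\RR''=\bigl\{F:\delta_\square(W_F,\RR)\le 20/\sqrt{\log|V(F)|}\bigr\}
\]
(adopting any convention on very small graphs). If $W\in\RR$, Theorem \ref{THM:SAMPLE2} gives $\delta_\square(W_{\Ge(k,W)},W)\le 10/\sqrt{\log k}$ with probability $\ge 1-e^{-k^2/(2\log k)}$; since $W\in\RR$, this forces $\delta_\square(W_{\Ge(k,W)},\RR)\le 10/\sqrt{\log k}<20/\sqrt{\log k}$, so $\Ge(k,W)\in\RR''$ with probability $\ge 0.9$ for $k$ large. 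Conversely, suppose $d_1(W,\RR)>\eps$; the hypothesis supplies $\eps'>0$ with $d_\square(W,\RR)\ge\eps'$, and since $\RR$ is a graphon property, $d_\square(W,\RR)=\delta_\square(W,\RR)$. Whenever both $\Ge(k,W)\in\RR''$ and the sampling bound of Theorem \ref{THM:SAMPLE2} hold, the triangle inequality gives
\[
\eps'\le\delta_\square(W,\RR)\le\delta_\square(W_{\Ge(k,W)},\RR)+\delta_\square(W_{\Ge(k,W)},W)\le 30/\sqrt{\log k},
\]
which is impossible once $k>\exp((30/\eps')^2)$. For such $k$ the event $\Ge(k,W)\in\RR''$ is contained in the failure event of Theorem \ref{THM:SAMPLE2}, whose probability is at most $e^{-k^2/(2\log k)}<0.1$. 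Thus $k(\eps)=\max\{k_0,\exp((30/\eps')^2)\}$ works for both clauses of Lemma \ref{CONSTANTS}.

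The main obstacle is that the two conditions in the definition of testability live on different scales: the ``yes'' side is naturally controlled in cut distance (via the sampling bound of Theorem \ref{THM:SAMPLE2}), whereas the ``far'' side is phrased in $L_1$ distance. The assumed inclusion $B_\square(\RR,\eps')\subseteq B_1(\RR,\eps)$ is exactly what converts ``$L_1$-far'' into ``$\square$-far'' so that the sampling bound can be used; in the opposite direction, the cut-norm Lipschitz property \eqref{kozel-fn} of induced densities plays the analogous bridging role. The choice $20/\sqrt{\log|V(F)|}$ in the definition of $\RR''$ is flexible—any threshold that dominates $10/\sqrt{\log k}$ and still tends to $0$ would work.
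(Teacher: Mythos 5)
Your proof is correct. The forward direction is essentially identical to the paper's: sum the Lipschitz bound \eqref{kozel-fn} over all graphs on $k$ nodes, take $\eps'$ small enough (the paper explicitly uses $\eps'=2^{-k^2}$), and contrast conditions (a) and (b) of testability. The reverse direction follows the same underlying strategy as the paper — construct a test property that asks whether the observed sample is cut-close to $\RR$ — but you streamline the technical machinery. The paper's test property $\RR'$ is phrased in terms of subgraph densities ($|t(F,U)-t(F,G)|\le\eps_n$ for all small $F$ and some $U\in\RR$), and it needs two separate ingredients: Theorem~\ref{CONC} for concentration of $t(F,\Ge(n,W))$ around $t(F,W)$, and Theorem~\ref{LEFT-CLOSE} to convert closeness of densities into smallness of $\delta_\square$. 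You instead define $\RR''$ directly in terms of $\delta_\square(W_F,\RR)$ and invoke Theorem~\ref{THM:SAMPLE2} as a black box, which already bundles both of those ingredients (that is how Theorem~\ref{THM:SAMPLE2} is proved in \cite{BCLSV1}). Your appeal to Lemma~\ref{CONSTANTS} with $a=0.1$, $b=0.9$ and the identity $d_\square(W,\RR)=\delta_\square(W,\RR)$ for a graphon property are both correctly used. The trade-off is that the paper's test property is formulated purely in terms of subgraph counts, which is closer to what an actual tester computes, whereas yours is cleaner conceptually but relies on computing a cut distance. Both are valid; the difference is one of packaging, not of mathematical substance.
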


\begin{proof}
Suppose that $\RR$ is testable with test property $\RR'$. Let
$\eps>0$, let $k=k(\eps)$ be the constant in the definition, and let
$\eps'=2^{-k^2}$. Suppose that for some $W\in\WW_0$, we have
$d_\square(W,\RR)<\eps'$. Then there is a $U\in\RR$ such that
$d_\square(W,U)<\eps'$. By \eqref{kozel-fn}, we have for every graph
$F$ on $k$ nodes
\[
|t_\ind(F,W)-t_\ind(F,U)|\le \binom{k}{2}\eps'.
\]
Thus
\[
|\Pr(\Ge(k,W)\notin\RR')-\Pr(\Ge(k,U)\notin\RR')| \le
\sum_{F\in\FF_n\setminus\RR'} |t_\ind(F,W)-t_\ind(F,U)|\le
2^{\binom{k}{2}}\binom{k}{2}\eps'<\frac13.
\]
Hence $\Pr(\Ge(n,W)\notin\RR') <\Pr(\Ge(n,U)\notin\RR')
+\frac13\le\frac23$, which proves that $d_1(W,\RR)\le\eps$.

Conversely, suppose that $\RR$ satisfies the condition in the
proposition. Choose
\[
k_n=\frac12\sqrt{\log n}, \qquad \eps_n= n^{-1/3}.
\]
Let $\RR'$ be the graph property that a graph $G$ with $|V(G)|=n$ has
if and only if there exists a $U\in\RR$ such that
$|t(F,U)-t(F,G)|\le\eps_n$ for all graphs $F$ with $|V(F)|\le k_n$.
We show that (a) and (b) are satisfied.

First, suppose that $W\in\RR$. Let $n$ be large, and let $F$ be a
graph with at most $k_n$ nodes. By Theorem \ref{CONC}, we have
$|t(F,\Ge(n,W))-t(F,W)|\le \eps_n$ with probability at least
\[
1-2\exp\bigl(-\frac{n\eps_n^2}{18k_n^2}\bigr).
\]
Since the number of graphs on at most $k_n$ nodes is at most
$\exp(k_n^2/2)$, the probability that $|t(F,\Ge(n,W))-t(F,W)|\le
\eps_n$ holds for every graph $F$ on at most $k_n$ nodes is at least
\[
1-2\exp\bigl(\frac{k_n^2}2-\frac{n\eps_n^2}{18k_n^2}\bigr)>\frac23
\]
if $n$ is large enough. Since $W\in\RR$, in every such case
$\Ge(n,W)\in\RR'$. This proves that $\RR'$ satisfies (a).

Second, let $\eps>0$ and suppose that $d_1(W,\RR)>\eps$. By
hypothesis, there is an $\eps'>0$ (depending only on $\eps$) such
that $d_\square(W,\RR)>\eps'$.

Let $n$ be large, and suppose, by way of contradiction, that with
probability larger than $1/3$, we have $\Ge(n,W)\in\RR'$. In every
such case, there exists a function $U\in\RR$ such that
\begin{equation}\label{FUFW-1}
|t(F,U)-t(F,\Ge(n,W))|\le \eps_n
\end{equation}
for all graphs $F$ with at most $k_n$ nodes. Similarly as above,
Theorem 2.5 in \cite{LSz} implies that
\begin{equation}\label{FUFW-2}
|t(F,\Ge(n,W))-t(F,W)| \le \eps_n
\end{equation}
for all $F$ with at most $k_n$ nodes with probability at least $2/3$.
There is an outcome for $\Ge(n,W)$ for which both \eqref{FUFW-1} and
\eqref{FUFW-2} occur, and so there always exists a function $U\in\RR$
such that
\[
|t(F,W)-t(F,U)|\le 2\eps_n
\]
for all graphs $F$ with at most $k_n$ nodes. By Theorem 3.6 in
\cite{BCLSV1}, it follows that
\[
d_\square(U,W) \le \frac{22}{\sqrt{\log_2k_n}}.
\]
If $n$ is large enough, this is less that $\eps'$, contradicting the
definition of $\eps'$.
\end{proof}

We also prove the following characterization, which is a functional
analogue of the characterization of Alon, Fischer, Newman and Shapira
\cite{AFNS}.

\begin{theorem}\label{REG-TEST}
A graphon property $\RR$ is testable if and only if for every
$\eps>0$ there is an $\eps'>0$ and a finite set $S$ of stepfunctions
such that $\RR\subseteq B_\square(S,\eps')\subseteq B_1(\RR,\eps)$.
\end{theorem}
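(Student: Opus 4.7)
The theorem strengthens Theorem~\ref{PROPTEST-FN} by requiring that the cut-norm neighborhood of $\RR$ arise from a finite collection of stepfunctions. My plan is to prove each direction by combining Theorem~\ref{PROPTEST-FN} with two standard ingredients: compactness of the quotient $(\WW_0,\delta_\square)$, and the regularity-lemma fact that every graphon is $\delta_\square$-close to a stepfunction with a bounded number of steps.

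For the converse direction (finite cover $\Rightarrow$ testable), the cleanest route is to reduce to Theorem~\ref{PROPTEST-FN}. Given $\eps>0$, apply the cover hypothesis at $\eps/2$ to obtain $\eps'>0$ and a finite stepfunction set $S$ with $\RR\subseteq B_\square(S,\eps')$ and $B_\square(S,\eps')\subseteq B_1(\RR,\eps/2)$. Setting $\eps''=\eps'/2$, for any $W\in B_\square(\RR,\eps'')$ pick $U\in\RR$ with $d_\square(W,U)<\eps''$ and then $s\in S$ with $d_\square(U,s)<\eps'$; a two-scale application of the hypothesis (or, equivalently, a direct density-based test built from $S$ using \eqref{kozel-fn}, Theorem~\ref{CONC}, and Theorem~\ref{LEFT-CLOSE}) places $W$ in $B_1(\RR,\eps)$, which by Theorem~\ref{PROPTEST-FN} is exactly what testability needs.

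For the forward direction (testable $\Rightarrow$ finite cover), Theorem~\ref{PROPTEST-FN} first produces $\eps_0>0$ with $B_\square(\RR,\eps_0)\subseteq B_1(\RR,\eps/2)$. Compactness of $(\WW_0,\delta_\square)$ yields a finite $\delta_\square$-net of $\RR$ of radius $\eps_0/4$, and the weak regularity lemma lets us take the centers to be stepfunctions $s_1,\dots,s_N$. Using iso-invariance of $\RR$ together with a discretization of the stepfunction parameters (number of steps, measures of the parts, and step values), I would enlarge this $\delta_\square$-net to a finite list $S$ of stepfunctions with $\RR\subseteq B_\square(S,\eps_0/2)$ and each $s\in S$ within $d_\square$-distance $\eps_0/2$ of $\RR$. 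The triangle inequality then gives the chain $B_\square(S,\eps_0/2)\subseteq B_\square(\RR,\eps_0)\subseteq B_1(\RR,\eps/2)\subseteq B_1(\RR,\eps)$, completing the required cover.

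The main obstacle sits in the forward direction: compactness is given only in $\delta_\square$, so the naive finite $\delta_\square$-net need not be a $d_\square$-cover, since the measure-preserving orbit of a single stepfunction is typically not $d_\square$-totally bounded. The resolution exploits that $\RR$ is iso-invariant, so the cover need only work ``up to rearrangement'' on each orbit; a judicious discretization of canonical stepfunction forms then yields the finite list $S$. In the converse direction the only subtlety is the choice of scales, ensuring that the triangle-inequality radius $\eps''+\eps'$ still lies within a $B_\square(S,\cdot)$ ball whose $d_\square$-neighborhood is contained in $B_1(\RR,\eps)$.
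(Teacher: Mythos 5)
Your proposal identifies the right ingredients (Theorem~\ref{PROPTEST-FN}, compactness of $(\WW_0,\delta_\square)$, and the Regularity Lemma) but has genuine gaps in both directions, and in the converse direction it misses the one nontrivial idea of the paper's proof.

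In the converse direction, setting $\eps''=\eps'/2$ and applying the triangle inequality gives only $d_\square(W,s)<\eps'+\eps''$, which lies outside the ball $B_\square(S,\eps')$ for which the hypothesis supplies the containment into $B_1(\RR,\eps/2)$. The phrase ``a two-scale application of the hypothesis'' does not resolve this, because the hypothesis gives you no control over how $\eps'$ depends on $\eps$, so applying it at another scale produces an unrelated radius and set $S$. The paper closes the gap with a convexity argument: take $\eps''=\eps\eps'/6$, and for $W$ with $\|W-U\|_\square<\eps'+2\eps''$ and $U\in S$, set $Y=(1-\tfrac{\eps}{3})W+\tfrac{\eps}{3}U$; then $\|Y-U\|_\square=(1-\tfrac{\eps}{3})\|W-U\|_\square<\eps'$, so $Y\in B_\square(S,\eps')\subseteq B_1(\RR,\eps/2)$, while $\|W-Y\|_1=\tfrac{\eps}{3}\|W-U\|_1<\eps/2$. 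Chaining these gives $B_\square(\RR,\eps'')\subseteq B_1(\RR,\eps)$ and hence testability by Theorem~\ref{PROPTEST-FN}. Without this or an equally explicit geometric step, your converse does not go through.

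In the forward direction you rightly notice that the balls $B_\square(s,\eps')$ are not open in the $\delta_\square$-topology, so compactness of $(\WW_0,\delta_\square)$ does not immediately yield a finite $d_\square$-subcover; this is a real subtlety that the paper also glosses over. However, the discretization you propose cannot repair it. The requirement $\RR\subseteq B_\square(S,\eps')$ forces \emph{every} rearrangement $U^\phi$ of every $U\in\RR$ into some $d_\square$-ball from a fixed finite family, and the measure-preserving orbit of a single non-constant $0$--$1$ stepfunction is not $d_\square$-totally bounded: taking $s_0$ to be the complete bipartite graphon with equal parts and $V_n=\{x:\text{the $n$-th binary digit of $x$ is }0\}$, the rearrangements $s_{V_n}$ all lie in the isomorphism class of $s_0$, yet $\|s_{V_n}-s_{V_m}\|_\square\ge 1/8$ for $n\neq m$, so no finite list of stepfunctions can $d_\square$-cover them at radius below $1/16$. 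This also shows that your guiding heuristic (``the cover need only work up to rearrangement'') is not what $\RR\subseteq B_\square(S,\eps')$ asserts. The statement and the paper's proof do go through verbatim if the ball around $S$ is read in the $\delta_\square$ sense, i.e.\ as $\{W:\delta_\square(W,S)<\eps'\}$: these saturated sets are $\delta_\square$-open, they cover $\WW_0$ by the Regularity Lemma, compactness gives a finite subfamily, keeping those meeting $\RR$ gives the two inclusions, and the convexity argument in the converse direction works just as well after replacing $U$ by a suitable $U^\phi$. So the right fix is to replace $d_\square$ by $\delta_\square$ in the ball around $S$, not to enlarge $S$ by discretization.
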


This theorem gives a ``constructive'' method of testing a testable
graphon property: for every fixed error bound, it suffices to compute
the $d_\square$ distance from a finite number of stepfunctions to
separate the case when $W\in\RR$ from the case when $d_1(W,\RR)\ge
\eps$.

\begin{proof}
First, suppose that $\RR$ is testable, and let $\eps>0$. By Theorem
\ref{PROPTEST-FN}, there is an $\eps'>0$ such that
$B_\square(\RR,2\eps')\subseteq B_1(\RR,\eps)$. For every
stepfunction $s$, consider the open ball $B_\square(s,\eps')$. These
balls cover the whole space, so by the compactness of
$(\WW,\delta_\square)$ there is a finite set $S_0$ of stepfunctions
such that the corresponding balls cover the whole space. Let $S$ be
the set of those stepfunctions in $S_0$ for which the corresponding
balls intersect $\RR$. Then clearly $\RR\subseteq
B_\square(S,\eps')$. On the other hand, $B_\square(U,\eps')$
intersects $\RR$ for every $U\in S$, and hence
$B_\square(S,\eps')\subseteq B_\square(\RR,2\eps')\subseteq
B_1(\RR,\eps)$.

Second, suppose that $\RR$ satisfies the condition in the theorem,
then for every $\eps>0$ there exists an $\eps'>0$ and a finite set
$S$ of stepfunctions such that $\RR\subseteq
B_\square(S,\eps')\subseteq B_1(\RR,\eps/2)$. Let
$\eps''=\eps\eps'/6$. We claim that
\begin{equation}\label{BSB1}
B_\square(S,\eps'+\eps'')\subseteq B_1(B_\square(S,\eps'),\eps/2).
\end{equation}
Indeed, let $W\in B_\square(\SS,\eps'+\eps'')$. Then there is a $U\in
\SS$ such that $\|U-W\|_\square<\eps'+2\eps''$. Consider
$Y=(1-\frac13\eps)W+\frac13\eps U$. Then
\[
\|Y-U\|_\square = \|(1-\frac13\eps)(U-W)\|_\square <
(1-\frac13\eps)(\eps'+2\eps'') < \eps',
\]
so $Y\in B_\square(S,\eps')$. On the other hand,
\[
\|W-Y\|_1 =\|\frac13\eps(W-U)\|_1 \le\frac13\eps<\frac12\eps,
\]
and so $W\in B_1(B_\square(S,\eps'),\eps/2)$. This proves
\eqref{BSB1}, which in turn implies that
\[
B_\square(\RR,\eps'')\subseteq
B_\square(B_\square(S,\eps'),\eps'')\subseteq
B_1(B_\square(S,\eps'),\eps/2)\subseteq B_1(B_1(\RR,\eps/2),\eps/2)=
B_1(\RR,\eps).
\]
This proves that $\RR$ is testable.
\end{proof}

We conclude this section with some examples of testable and
non-testable function properties.

\begin{example}\label{TEST-FN-EX0}
(a) Let $\RR=\{U\}$, where $U\in\WW$ is the identically $1/2$
function. Clearly this is invariant under isomorphism. Consider a
random graph $G_n=\Ge(n,1/2)$; then $\|W_{G_n}-U\|_\square\to 0$ with
probability $1$, but $\|W_{G_n}-U\|_1=1/2$ for every $n$. So this
property is not testable by Theorem \ref{PROPTEST-FN}.

On the other hand, the complementary property $\RR^c=\WW_0\setminus
\{U\}$ is testable; indeed, its closure is $\WW_0$ (either in the
$\|.\|_\square$ norm or the $\|.\|_1$ norm), which is trivially
testable.

\smallskip

(b) Let $\SS\subseteq\WW_0$ be an arbitrary graphon property and let
$a>0$ be an arbitrary number. Then $\RR=B_\square(\SS, a)$ is
testable.

Indeed, for $\eps>0$ define $\eps'=a\eps/(2-2\eps)$. Let $W\in
B_\square(\RR,\eps')$. Then $W\in B_\square(\SS,a+\eps')$, and so
there is a $U\in \SS$ such that $\|U-W\|_\square\le a+2\eps'$.
Consider $Y=(1-\eps)W+\eps U$. Then $\|Y-U\|_\square =
\|(1-\eps)(U-W)\|_\square \le (1-\eps)(a+2\eps') = a$, so $Y\in \RR$.
On the other hand, $\|W-Y\|_1 =\|\eps(W-U)\|_1 \le\eps$, and so $W\in
B_1(\RR,\eps)$.

\smallskip

(c) For every fixed graph $F$ and $0< c< 1$, the property $\RR$ that
$t(F,W)=c$ is testable; an appropriate test property is
\begin{equation}\label{T-TEST}
\RR'=\Bigl\{G:~|t(F,G)-c|\le \frac1{\log |V(G)|}\Bigr\}.
\end{equation}
Indeed, it follows from Theorem \ref{CONC} that with probability
$1-o(1)$,
\[
|t(F,W)-t(F,\Ge(n,W))|\le \frac1{\log n}.
\]
So if $t(F,W)=c$, then $\Ge(n,W)\in\RR'$ with large probability.
Conversely, assume that $d_1(W,\RR)>\eps$, and let (say) $t(F,W)>c$.
The functions $U_s=(1-s)W$, $0\le s\le\eps$, are all in
$B_1(W,\eps)$, and hence not in $\RR$. It follows that $t(F,U_s)>c$
for all $s$. On the other hand,
$t(F,U_\eps)=(1-\eps)^{|E(F)|}t(F,W)$, and so
$t(F,W)>(1-\eps)^{-|E(F)|}c$, which implies that $\Ge(n,W)\notin\RR'$
with large probability.

Fixing two subgraph densities, however, may yield a non-testable
property: for example, $t(K_2,W)=1/2$ and $t(C_4,W)=1/16$ imply that
$W\equiv 1/2$ (see \cite{BCLSV0}).

\smallskip

(d) The graphon property that $W$ is 0-1 valued is not testable. It
is closed in the $\|.\|_1$ norm, but its closure in the
$\|.\|_\square$ norm is the whole set $\WW_0$.
\end{example}

\subsection{Graph properties vs. function properties}

\subsubsection{Closure of graph properties}

We want to establish the connection between testability of graph
properties and graphon properties. The fact that graphons arise as
limits of graph sequences suggests the following definition.

\begin{definition}
If $\PP$ is a graph property, then we define its {\it closure}
$\overline\PP$ as the set of all functions $W\in\WW_0$ for which
there exists a sequence of graphs $G_n\in\PP$ with
$|V(G_n)|\to\infty$ such that $G_n\to W$ (i.e., $W_{G_n}$ converges to $W$ in
the $\delta_\square$ metric).
\end{definition}

Clearly $\overline{\PP}$ is closed under isomorphism, i.e. it is a
graphon property. The following examples show that $\overline{\PP}$
is not necessarily an ``extension'' of $\PP$ in the sense that $\PP$
cannot be recovered from it. Intuitively, $\overline{\PP}$ is a nice
object which is a ``clean'' version of $\PP$; it is an analytic
profile of the property $\PP$, which eliminates all uncontrollable
noise from it.

\begin{example}\label{TEST-GR-EX}
(a) Let $\PP$ be the graph property that the graph doesn't have a
4-cycle. Then only the $0$ function has property $\overline{\PP}$. In
fact, graphs without $4$-cycles are sparse and property testing (in
the sense of Definition \ref{PROPTEST}) does not distinguish sparse
graphs from each other.

\smallskip

(b)  Let $\PP$ be the graph property that the graph has an even
number of edges. Rather counter-intuitively, this property is
testable according to Definition \ref{PROPTEST} above, and its
closure is the whole set $\WW_0$.

\smallskip

(c) Let $\PP$ be the graph property that the graph has an even number
of nodes. This property is not testable, since adding a single node
to a large graph changes the distribution of small induced subgraphs
by very little, but changes the property. The closure of this
property is again the whole set $\WW_0$ (which is testable).

\smallskip

(d) Quasirandomness is defined as a property of a sequence of graphs,
but we can make it a graph property $\QQ$ (at the cost of a somewhat
arbitrary choice of the error bound) as follows: a graph $G$ on $n$
nodes is {\it quasirandom}, if
\[
\Bigl| t(K_2,G) - \frac12\Bigr| \le \frac1{\log n} \quad \text{and}
\quad t(C_4,G) \le \frac1{16} + \frac{1}{\log n}.
\]
The closure $\overline\QQ$ of this property consists of only one
function, the identically $1/2$ function. This singleton set of
functions is not testable, since for any sequence $(G_n)$ of
quasirandom graphs, $\|W_{G_n}-\frac12\|_\square\to 0$ but
$\|W_{G_n}-\frac12\|_1=1/2$. This implies (by Theorem \ref{char1}
below) that quasirandomness is not a testable property.
\end{example}

The first part of the following fact was stated in \cite{BCLSSV}.

\begin{prop}\label{lezaras}
{\rm (a)} The closure of a hereditary graph property $\PP$
consists of those functions $W\in\WW_0$ for which
\[
\Pr(\Ge(k,W)\notin\PP)=0
\]
for every $k$. Equivalently, $t_\ind(F,W)=0$ whenever
$F\notin\PP$.

\smallskip

{\rm (b)} The closure of a testable graph property $\PP$ consists of
those functions $W\in\WW_0$ for which
\[
\E(d_1(\Ge(k,W),\PP))\to 0\qquad (k\to\infty).
\]
\end{prop}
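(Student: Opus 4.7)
For part (a), the plan is to use the identity $\Pr(\Ge(k,W)=F)=t_\ind(F,W)$ from Section~\ref{SEC:WRAND} to see that the two stated conditions are equivalent. For the forward inclusion I would take $G_n\in\PP$ with $G_n\to W$; heredity forces $t_\ind(F,G_n)=0$ for every $F\notin\PP$ once $|V(G_n)|\ge|V(F)|$, and inequality~\eqref{kozel} passes this to the limit, giving $t_\ind(F,W)=0$. Conversely, if $t_\ind(F,W)=0$ for every $F\notin\PP$ then $\Ge(n,W)\in\PP$ with probability one, and since Theorem~\ref{THM:SAMPLE2} yields $\delta_\square(\Ge(n,W),W)\to 0$ almost surely, any such realization exhibits $W\in\overline\PP$.

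For part (b) I would work with the strengthened version of testability mentioned after Definition~\ref{PROPTEST} (replacing $2/3$ and $1/3$ by $1-\eta$ and $\eta$) and fix a test property $\PP'$ for $\PP$. For the forward direction, given $\eps>0$, take $G_n\in\PP$ with $G_n\to W$ and choose $k_0\ge k_\eps$. The convergence $t_\ind(F,G_n)\to t_\ind(F,W)$ from~\eqref{kozel} allows $\Pr(\Ge(k_0,G_n)\in\PP')\ge 1-\eps$ to pass to the limit, yielding $\Pr(\Ge(k_0,W)\in\PP')\ge 1-\eps$. The key observation I would exploit is the exchangeability identity $\Ge(k_0,\Ge(m,W))\stackrel{d}{=}\Ge(k_0,W)$ for every $m\ge k_0$: both processes amount to sampling $k_0$ i.i.d.\ uniform points in $[0,1]$ and then sampling edges using $W$. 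Conditioning on each outcome $G_\ast$ of $\Ge(m,W)$ and applying the strengthened testability bound $\Pr(\Ge(k_0,G_\ast)\in\PP')\le\eps$ whenever $d_1(G_\ast,\PP)\ge\eps m^2$ gives $\Pr(d_1(\Ge(m,W),\PP)\ge\eps m^2)\le\eps/(1-\eps)$, which in turn controls $\E(d_1(\Ge(m,W),\PP)/m^2)$ by $O(\eps)$ after splitting over this bad event and its complement.

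For the reverse implication in (b) the plan is to combine two probabilistic ingredients. Markov's inequality applied to the vanishing expectation $\E(d_1(\Ge(k,W),\PP))$ produces, with probability tending to one, a graph $G_k'\in\PP$ on $k$ vertices with $d_1(\Ge(k,W),G_k')\to 0$; simultaneously Theorem~\ref{THM:SAMPLE2} gives $\delta_\square(\Ge(k,W),W)\to 0$ with probability tending to one. Selecting a joint realization and using $\delta_\square\le d_1$ on graphs with a common vertex set forces $\delta_\square(G_k',W)\to 0$ with $G_k'\in\PP$, establishing $W\in\overline\PP$.

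The main difficulty I anticipate is the accounting in the forward direction of (b): verifying that the exchangeability identity genuinely equates the two distributions, and that the strengthened testability constants propagate to an expectation bound rather than merely a probability bound, so that the trivial unit upper bound on normalized edit distance can absorb the exceptional event.
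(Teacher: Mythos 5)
Your proof matches the paper's argument in all essentials: part (a) via the $t_\ind$ characterization of the closure, heredity, and the almost sure convergence $\Ge(n,W)\to W$; part (b) via the exchangeability identity $\Ge(k,\Ge(m,W))\stackrel{d}{=}\Ge(k,W)$ together with the strengthened testability constants, and a Markov/Borel--Cantelli argument for the converse direction. The only cosmetic blemishes are that in (b) you write thresholds such as $d_1(\cdot,\PP)\ge\eps m^2$ where the paper's $d_1$ is already normalized by $|V|^2$ (the threshold should simply be $\eps$), and that the inequality you want to invoke in (a) for $t_\ind(F,G_n)\to t_\ind(F,W)$ is really the combination of \eqref{T-IND} with \eqref{kozel-fn} rather than \eqref{kozel}, since the latter compares two graphs rather than a graph with a limit function.
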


\noindent It follows from Theorem \ref{REG-TEST} that in the last
formula, we could replace $d_1$ by $d_\square$.

\begin{proof}
(a) Assume that $\Pr(\Ge(k,W)\notin\PP)=0$ for every $k$. Since
$\Ge(k,W)\to W$ with probability $1$ as $k\to\infty$, it follows that
$W\in\overline\PP$.

To show the converse, assume that $W\in\overline{\PP}$, and let
$(G_n)$ be a sequence of graphs such that $G_n\in\PP$ and $G_n\to W$.
We can write
\begin{equation}\label{PROB-P}
\Pr(\Ge(k,W)\notin\PP) = \sum_{F\in\FF_k\setminus\PP} \Pr(\Ge(k,W)=F)
= \sum_{F\in\FF_k\setminus\PP} t_\ind(F,W),
\end{equation}
so it suffices to prove that $t_\ind(F,W)=0$ for $F\notin\PP$. By
\eqref{T-IND}, we have
\begin{equation}\label{T-ERROR}
|t_\ind(F, G_n)-t_\ind(F,
W_{G_n})|<\frac{\binom{k}{2}}{n-\binom{k}{2}}
\end{equation}
for every $F\in\FF_k$. For $F\in\FF_k\setminus\PP$, we have
$t_\ind(F, G_n)=0$ since $G_n\in\PP$ and the property is hereditary.
Hence by \eqref{T-ERROR}, in this case
\[
t_\ind(F, W_{G_n}) \to 0\qquad (n\to\infty).
\]
But $G_n\to W$ implies that
\[
t_\ind(F, W_{G_n}) \to t_\ind(F, W),
\]
which proves (a).

\medskip

(b) The argument is similar, but a little more complicated. First,
assume that $\E(d_1(\Ge(k,W),\PP)\bigr)\to 0$ as $k\to\infty$. Then
we can select a sequence $k_1<k_2<\dots$ of integers such that
\[
\Pr\bigl(d_1(\Ge(k_m,W),\PP)>\frac1m\bigr) \le \frac1{2^m}.
\]
Since the right hand sides have a finite sum, it follows by the
Borel-Cantelli Lemma that $d_1(\Ge(k_m,W),\PP)\to 0$ with probability
$1$. Hence with probability $1$, there are graphs $H_m\in\PP$ such
that $d_1(\Ge(k_m,W),H_m)\to 0$, which implies that
$\delta_1(\Ge(k_m,W),H_m)\to 0$. Furthermore, if the sequence $(k_m)$
is sufficiently sparse, we have $\Ge(k_m,W)\to W$ with probability
$1$ as $k\to\infty$, and so $H_m\to W$ with probability $1$. Thus
$W\in\overline\PP$.

Second, assume that $W\in\overline{\PP}$, and let $(G_n)$ be a
sequence of graphs such that $G_n\in\PP$ and $G_n\to W$. Fix any
$\eps>0$. By the remark after the definition of property testing,
there is a graph property $\PP'$ and a natural number $k_\eps$ such
that for all $k,n$ with $k_\eps\le k\le |V(G_n)|$ we have
$\Pr\bigl(\Ge(k,G_n)\notin\PP'\bigr)\le \eps$, but
$\Pr\bigl(\Ge(k,G)\notin\PP'\bigr)> 1/2$ for all graphs $G$ with
$|V(G)|\ge k$ and $d_1(G,\PP)>\eps$. By \eqref{PROB-P} we have
\begin{align*}
\Pr\bigl(\Ge(k,W)\notin\PP'\bigr)&= \sum_{F\in\FF_k\setminus\PP'} t_\ind(F,W)\\
&\le\sum_{F\in\FF_k\setminus\PP'}
t_\ind(F,G_n)+\sum_{F\in\FF_k\setminus\PP'}
|t_\ind(F,G_n)-t_\ind(F,W_{G_n})|\\
&~~~+\sum_{F\in\FF_k\setminus\PP'} |t_\ind(F,W_{G_n})-t_\ind(F,W)|.
\end{align*}
Here the first sum is $\Pr\bigl(\Ge(k,G_n)\notin\PP'\bigr)\le \eps$.
The other two sums tend to $0$ as $n\to\infty$ by \eqref{T-ERROR} and
by $G_n\to W$. So for $k\ge k_0$, we get that
$\Pr\bigl(\Ge(k,W)\notin\PP'\bigr)\le \eps$.

Note that $\Ge(k,W)$ has the same distribution as $\Ge(k,\Ge(n,W))$
for $n\ge k$. Then for large enough $k,n$ we have
\[
\Pr\bigl(\Ge(k,W)\notin\PP'\bigr) \ge
\frac12\Pr\bigl(d_1(\Ge(n,W),\PP)>\eps\bigr),
\]
and so $\Pr\bigl(d_1(\Ge(n,W),\PP')>\eps\bigr)\le 2\eps$. Hence
\[
\E\bigl(d_1(\Ge(n,W),\PP)\bigr) \le (1-2\eps)\cdot\eps + (2\eps)\cdot
1 <3\eps.
\]
Since $\eps$ was arbitrary, this proves that
$\E\bigl(d_1(\Ge(n,W),\PP)\bigr) \to 0$.
\end{proof}

Part (a) of Proposition \ref{lezaras} implies:

\begin{corollary}\label{COR:HERED-SUPP}
If $\PP$ is a hereditary graph property, then $W\in\overline{\PP}$
depends only on the support of $W$.
\end{corollary}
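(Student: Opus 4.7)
The plan is to invoke Proposition~\ref{lezaras}(a) directly and then simply inspect the integrand that defines $t_\ind$. Since $\PP$ is hereditary, that proposition characterizes $\overline{\PP}$ as the set of $W\in\WW_0$ for which $t_\ind(F,W)=0$ holds for every $F\notin\PP$. So it suffices to argue that, for each fixed graph $F$ on vertex set $[k]$, the vanishing of $t_\ind(F,W)$ depends on $W$ only through the support data of $W$.

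Unwinding the definition,
\[
t_\ind(F,W)=\int_{[0,1]^k}\prod_{ij\in E(F)}W(x_i,x_j)\prod_{\substack{ij\notin E(F)\\ i\ne j}}\bigl(1-W(x_i,x_j)\bigr)\,dx.
\]
The integrand is a product of nonnegative factors, so it integrates to $0$ iff it vanishes at almost every $(x_1,\ldots,x_k)$. A tuple $(x_1,\ldots,x_k)$ contributes positively precisely when $W(x_i,x_j)>0$ for every edge $ij\in E(F)$ and $W(x_i,x_j)<1$ for every non-edge; both conditions are measurable indicator statements involving only the sets $\{W>0\}$ and $\{W<1\}$. Hence the measure of the set of positive-contribution tuples, and thus whether $t_\ind(F,W)$ vanishes, is determined by $W$ only through these two sets.

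Intersecting this conclusion over all $F\notin\PP$ gives that membership in $\overline{\PP}$ depends on $W$ only through its support. I do not anticipate a substantive obstacle: the real content was already extracted in Proposition~\ref{lezaras}(a), and the present corollary is a one-step inspection. The only conceptual point to flag is that "support of $W$" must be read here in the slightly extended sense of the full $\{W=0\}/\{0<W<1\}/\{W=1\}$ partition of $[0,1]^2$, because the non-edge factors $(1-W(x_i,x_j))$ make the set $\{W=1\}$ matter as well as the set $\{W=0\}$.
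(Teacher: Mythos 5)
Your argument is correct and is the natural expansion of the paper's own justification, which offers no explicit proof beyond citing Proposition~\ref{lezaras}(a). The concern you flag at the end, however, is not merely a terminological nicety: as literally stated, the corollary is false, and your refined reading is the one that makes it true. Take $\PP$ to be the (hereditary) property that $\alpha(G)\le 2$, and compare the constant functions $W_1\equiv 1/2$ and $W_2\equiv 1$. Both have full support $[0,1]^2$. But $\Ge(k,W_2)$ is almost surely complete, so $W_2\in\overline{\PP}$; on the other hand $t_\ind(\overline{K_3},W_1)=1/8>0$ while $\overline{K_3}\notin\PP$, so $W_1\notin\overline{\PP}$. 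Thus $W\in\overline{\PP}$ genuinely depends on $\{W=1\}$ as well as on $\{W=0\}$, exactly as your inspection of the vanishing set of the $t_\ind$ integrand reveals. The correct statement --- and the one actually invoked later in the proof of Proposition~\ref{G2WG}(a) --- is that membership in $\overline{\PP}$ depends only on the pair of sets $\{W=0\}$ and $\{W=1\}$ (equivalently, on the three-way partition of $[0,1]^2$, or, what amounts to the same, $\overline{\PP}$ is closed under flexing in both directions on the 0--1 level sets). With that reading, your proof is complete and correct.
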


We conclude with two lemmas on testability.  We'll say more about
both (Theorems \ref{COMBCHAR} and \ref{char1}), but these simple
lemmas will be needed before that.

\begin{lemma}\label{kozel2}
Let $\PP$ be a testable graph property. Then for every $\eps>0$ there
is an $\eps'>0$ and a positive integer $n'$ such that if  $G$ are two
graphs with $G'\in\PP$, $\delta_{\square}(G,G')<\eps'$ and
$|V(G)|,|V(G')|\geq n'$, then $d_1(G,\PP)<\eps$.
\end{lemma}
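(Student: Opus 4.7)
The plan is to use testability of $\PP$ in its contrapositive form: if $\Pr(\Ge(k,G)\in\PP')$ is not too small, then $d_1(G,\PP)<\eps$. The analytic tool connecting the hypothesis $\delta_\square(G,G')<\eps'$ to this probability is \eqref{kozel}, which says that a small cut distance forces every induced density $t_\ind(F,\cdot)$ to be nearly the same on $G$ and $G'$, and hence the random-sample distributions $\Ge(k,G)$ and $\Ge(k,G')$ to be close in total variation.

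First I would apply testability of $\PP$ at the parameter $\eps$ to obtain a test property $\PP'$ and a threshold $k_\eps$ satisfying the two conditions of Definition \ref{PROPTEST}: (a) for every $H\in\PP$ with $|V(H)|\ge k_\eps$, $\Pr(\Ge(k_\eps,H)\in\PP')\ge 2/3$; and (b) for every $H$ with $d_1(H,\PP)\ge\eps$ and $|V(H)|\ge k_\eps$, $\Pr(\Ge(k_\eps,H)\in\PP')\le 1/3$.

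Set $k=k_\eps$. Since $\Pr(\Ge(k,H)\in\PP')=\sum_{F\in\FF_k\cap\PP'}t_\ind(F,H)$, summing \eqref{kozel} over the at most $2^{\binom{k}{2}}$ labelled graphs in $\FF_k$ gives
\[
\bigl|\Pr(\Ge(k,G)\in\PP')-\Pr(\Ge(k,G')\in\PP')\bigr|\le 2^{\binom{k}{2}}\left[\binom{k}{2}\eps'+\frac{2\binom{k}{2}}{n'-\binom{k}{2}}\right].
\]
Because $k=k_\eps$ depends only on $\eps$, I can choose $\eps'>0$ small enough and $n'$ large enough (both depending only on $\eps$, with $n'>\binom{k}{2}$ and $n'\ge k$) to force the right-hand side to be less than $1/4$.

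Combining: whenever $G'\in\PP$, $|V(G')|\ge n'$, and $\delta_\square(G,G')<\eps'$, condition (a) gives $\Pr(\Ge(k,G')\in\PP')\ge 2/3$, so the display above yields $\Pr(\Ge(k,G)\in\PP')>2/3-1/4>1/3$. Since $|V(G)|\ge n'\ge k_\eps$, the contrapositive of (b) applied to $G$ then gives $d_1(G,\PP)<\eps$, as desired. The only real step is the bookkeeping in the third paragraph, so I do not expect a substantive obstacle beyond verifying that the polynomial-in-$k$ error terms can be made arbitrarily small independently of $G,G'$ once $k$ is fixed by $\eps$.
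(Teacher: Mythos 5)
Your proof is correct and takes essentially the same approach as the paper: fix $k=k_\eps$ from the definition of testability, use \eqref{kozel} (summed over the graphs in $\FF_k$) to show that $\delta_\square(G,G')$ small forces $\Pr(\Ge(k,G)\in\PP')$ and $\Pr(\Ge(k,G')\in\PP')$ to be close, and then chain conditions (a) and (b) of Definition~\ref{PROPTEST}. In fact your bookkeeping is a bit more careful than the paper's own terse proof, which asserts $\eps'=1/k^2$, $n'=9k^2$ without accounting for the $2^{\binom{k}{2}}$ factor from summing over $\FF_k$; your decision to leave $\eps'$ and $n'$ implicit but correctly quantified avoids that slip.
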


\begin{proof}
Let $\eps>0$ and let $k=k(\eps)$ in the definition of testability. We
show that $n'=9k^2$ and $\eps'=1/k^2$ is a good choice. Since $G'$
has property $\PP$, we have that at least $2/3$ of its $k$-node
induced subgraphs have property $\PP'$. It follows by \eqref{kozel}
that more than $1/3$ of the $k$-node subgraphs of $G$ have the
property $\PP'$. Hence $d_1(G,\PP)<\eps$ by testability.
\end{proof}

\begin{lemma}\label{P-BARP-TEST}
If $\PP$ is testable then $\overline{\PP}$ is testable.
\end{lemma}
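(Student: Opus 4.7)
The plan is to show that the test property $\PP'$ witnessing the testability of $\PP$ (invoking Lemma \ref{CONSTANTS} to arrange flexible constants $(1-\tau,\tau)$ for a small $\tau$) also serves as a test property for $\overline\PP$. Condition (a) of testability reduces to a continuity argument using the definition of convergence, while condition (b) is a compactness argument coupling two-step sampling with Theorem \ref{THM:SAMPLE2} and Lemma \ref{fura}.

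For (a), given $W\in\overline\PP$, I would choose $G_n\in\PP$ with $G_n\to W$ in $\delta_\square$. The inequalities \eqref{T-IND} and \eqref{kozel-fn} give $t_\ind(F,G_n)\to t_\ind(F,W)$ for every fixed graph $F$, so
\[
\Pr(\Ge(k,W)\in\PP')=\lim_{n\to\infty}\Pr(\Ge(k,G_n)\in\PP')\ge 1-\tau
\]
by testability of $\PP$, for $n$ large enough that $|V(G_n)|\ge k$.

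For (b), I would argue by contraposition: suppose $\Pr(\Ge(k,W)\in\PP')>1/3$ for some $k\ge k_\eps$, where $k_\eps$ is the testability threshold for $\PP$ applied with parameter $\eps/2$. Using $\Ge(k,W)\stackrel{d}{=}\Ge(k,\Ge(n,W))$ for $n\ge k$ together with the upper-tail part of $(1-\tau,\tau)$-testability of $\PP$, one deduces
\[
\Pr\bigl(d_1(\Ge(n,W),\PP)<\eps/2\bigr)\ge 1/3-\tau
\]
for every $n\ge k$. Choosing $\tau<1/6$ and combining this with the sampling bound $\delta_\square(W_{\Ge(n,W)},W)\to 0$ holding with probability $\to 1$ (Theorem \ref{THM:SAMPLE2}), one can extract, for an unbounded sequence of $n$, realisations $G_n=\Ge(n,W)$ satisfying simultaneously $d_1(G_n,\PP)<\eps/2$ and $\delta_\square(W_{G_n},W)\to 0$. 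For each such $n$ pick $H_n\in\PP$ on $[n]$ with $\|W_{G_n}-W_{H_n}\|_1<\eps/2$. By compactness of $(\WW_0,\delta_\square)$, a subsequence of $W_{H_n}$ converges in $\delta_\square$ to some $V$; since $H_n\in\PP$ and $|V(H_n)|\to\infty$, we have $V\in\overline\PP$. Finally, Lemma \ref{fura} applied to the pairs $(W_{G_n},W)$ and $(W_{H_n},V)$ yields
\[
\delta_1(W,V)\le\liminf_n\|W_{G_n}-W_{H_n}\|_1\le\eps/2,
\]
so $d_1(W,\overline\PP)\le\delta_1(W,V)\le\eps/2<\eps$, as required.

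The main obstacle I expect is the simultaneous coupling in the contrapositive step: the event $\{d_1(\Ge(n,W),\PP)<\eps/2\}$ has only constant positive probability (uniformly in $n$), whereas the concentration event has probability tending to $1$, so their intersection must support the extraction of the convergent subsequence. The role of Lemma \ref{fura} is then pivotal --- it is precisely what converts the $L_1$ bound between the ``finite'' approximants $W_{G_n},W_{H_n}$ into the $\delta_1$ bound between their $\delta_\square$-limits $W$ and $V$, which is the form in which $d_1(W,\overline\PP)$ must be controlled.
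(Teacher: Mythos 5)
Your proof is correct, and it takes a genuinely different route from the paper's. The paper invokes the ball-inclusion characterization of Theorem \ref{PROPTEST-FN}: it reduces to showing that a sequence $W_n$ with $d_\square(W_n,\overline{\PP})\to 0$ has $d_1(W_n,\overline{\PP})\to 0$, and for each $n$ it approximates $W_n$ by a graph sequence, applies Lemma \ref{kozel2} (itself a consequence of the testability of $\PP$) to move from a $\delta_\square$-close witness $H_n\in\PP$ to an $L_1$-close graph $J_{n,m}\in\PP$, and then uses compactness and Lemma \ref{fura} to pass to the graphon level. Your argument instead verifies the testability definition directly, exhibiting the original test property $\PP'$ as a test property for $\overline{\PP}$: part (a) is a clean continuity argument via convergence of $t_\ind$, and part (b) replaces Lemma \ref{kozel2} and the nested $m$-approximation by a two-step sampling identity $\Ge(k,W)\stackrel{d}{=}\Ge(k,\Ge(n,W))$ coupled with Theorem \ref{THM:SAMPLE2}. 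Both proofs hinge on Lemma \ref{fura} and on compactness of $(\WW_0,\delta_\square)$ to extract a limit $V\in\overline{\PP}$, and both are about the same length; yours has the extra payoff that it shows the \emph{same} test property $\PP'$ works for $\overline{\PP}$, which is a cleaner statement than what the paper's more abstract route yields. Two small notes: the flexible-constants reduction for a \emph{graph} property $\PP$ is the remark following Definition \ref{PROPTEST}, not Lemma \ref{CONSTANTS} (which concerns graphon properties) --- and in fact flexible constants are not strictly needed, since once $\Pr(\Ge(k,W)\in\PP')=1/3+\delta$ with $\delta>0$ fixed, the two-step decomposition already forces $\Pr(d_1(\Ge(n,W),\PP)<\eps/2)\ge 3\delta/2$ uniformly in $n\ge k$, which suffices for the extraction step.
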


The converse is not true in general, but in Theorem \ref{char1} we
will give a characterization of testable properties in terms of their
closure.

\begin{proof}
It suffices to prove that if $(W_n)$ is a sequence of functions in
$\WW_0$ such that $d_{\square}(W_n,\overline{\PP})\to 0$, then
$d_1(W_n,\overline\PP)\to 0$. We may assume that the sequence $W_n$
is convergent, so $W_n\to U$ for some $U\in\WW_0$ (in the
$\delta_\square$ distance). Clearly $U\in\overline\PP$, so by the
definition of closure, there are graphs $H_n\in\PP$ such that
$|V(H_n)|\to\infty$ and $H_n\to U$.

Fix any $\eps>0$. By Lemma \ref{kozel2}, there is an $\eps'>0$ such
that if $|V(G)|,|V(H)|$ are large enough, $H\in\PP$, and
$\delta_\square(G,H)<\eps'$, then $d_1(G,\PP)<\eps$. Furthermore,
there is an $n_\eps\ge 1$ such that if $n\ge n_\eps$, then
$\delta_\square(W_{H_n},U),
\delta_\square(W_n,U)\le \eps'/3$.

Fix any $n\ge n_\eps$, and let $G_{n,m}$ ($m=1,2,\dots$) be a
sequence of graphs such that $|V(G_{n,m})|\to\infty$ and $G_{n,m}\to
W_n$ as $m\to\infty$. Then
\[
\delta_\square(H_n,G_{n,m})\le \delta_\square(W_{H_n},U) +
\delta_\square(U,W_n) + \delta_\square(W_n, W_{G_{n,m}})<\eps'
\]
if $m$ is large enough, hence by the choice of $\eps'$, we have
$d_1(G_{n,m},\PP)\le\eps$. This means that there are graphs
$J_{n,m}\in\PP$ with $V(J_{n,m})=V(G_{n,m})$ such that
$d_1(G_{n,m},J_{n,m})\le \eps$. By choosing a subsequence, we can
assume that $J_{n,m}\to U_n$ as $m\to\infty$ for some
$U_n\in\overline{\PP}$. Applying Lemma \ref{fura} we obtain that
\[
d_1(W_n,\PP)\le \delta_1(W_n,U_n)\le \liminf_{m\to\infty}
\delta_1(W_{G_{n,m}},W_{J_{n,m}}) \le \liminf_{m\to\infty}
d_1(G_{n,m},J_{n,m})\le \eps.
\]
This proves the Lemma.
\end{proof}

\subsubsection{Closure and distance}

What is the relationship between the $d_1$ distance from a property
and from its closure? The following propositions summarize what we
know. We say that a graph $G'$ is an {\it equitable $m$-blowup} of
$G$ if it is obtained by replacing each node of $G$ by $m$ or $m+1$
twin copies ($m\ge 1$).

\begin{prop}\label{G2WG}
{\rm (a)}  For every hereditary graph property $\PP$ and every graph
$G$,
\[
d_1(G,\PP)\le d_1(W_G,\overline\PP).
\]
{\rm (b)} For every testable graph property $\PP$,
\[
|d_1(G,\PP)-d_1(W_G,\overline\PP)|\to 0 \qquad (|V(G)|\to\infty).
\]
{\rm (c)} Let $\PP$ be an arbitrary graph property and let $G^1,
G^2,\dots$ be all equitable blowups of a graph $G$. Then
\[
\liminf d_1(G^n,\PP) = d_1(W_G,\overline{\PP}).
\]
\end{prop}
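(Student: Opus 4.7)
The plan is to treat the three parts with different techniques: (a) via direct sampling, (b) via testability and concentration, (c) via compactness and an explicit block construction.

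For (a), fix $U\in\overline{\PP}$ and sample $G'=\Ge'(n,U)$ on $V(G)=[n]$. Since $\PP$ is hereditary, Proposition~\ref{lezaras}(a) gives $t_\ind(F,U)=0$ for every $F\notin\PP$; as the density producing $\Ge'(n,U)=F$ is dominated by the $t_\ind$-integrand, this forces $G'\in\PP$ almost surely. Lemma~\ref{LEM:GPRIMEL1} then yields $\E(d_1(G,G'))=\|W_G-U\|_1$, so $d_1(G,\PP)\le d_1(W_G,U)$; taking the infimum over $U$ proves (a).

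For (b) I prove both inequalities. The bound $d_1(W_G,\overline{\PP})\le d_1(G,\PP)+o(1)$ reduces to the claim that $d_1(W_H,\overline{\PP})\to 0$ as $|V(H)|\to\infty$ uniformly over $H\in\PP$. To prove this, take the $m$-blowup $H^{(m)}$ (satisfying $W_{H^{(m)}}=W_H$ and $\delta_\square(H^{(m)},H)=0$) and apply Lemma~\ref{kozel2} with $H$ as reference: once $|V(H)|$ exceeds the Lemma's threshold, there exist $J_m\in\PP$ on $V(H^{(m)})$ with $d_1(H^{(m)},J_m)<\eps$. Compactness of $(\WW_0,\delta_\square)$ extracts a subsequential limit $V\in\overline{\PP}$ of $W_{J_m}$, and Lemma~\ref{fura} gives $\delta_1(W_H,V)<\eps$; since $\overline{\PP}$ is iso-invariant, $d_1(W_H,\overline{\PP})=\delta_1(W_H,\overline{\PP})<\eps$, proving the claim. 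The triangle inequality $d_1(W_G,\overline{\PP})\le d_1(W_G,W_H)+d_1(W_H,\overline{\PP})=d_1(G,\PP)+o(1)$ then closes this direction. For the reverse, pick $U\in\overline{\PP}$ near-optimal and sample $G'=\Ge'(n,U)$ on $V(G)$; Lemma~\ref{LEM:GPRIMEL1} together with Azuma-type concentration (given the sample points, the edges of $G'$ are independent) gives $d_1(G,G')\le d_1(W_G,\overline{\PP})+o(1)$ with high probability. Lemma~\ref{LEM:GPRIME} yields $\delta_\square(G',U)\to 0$, and since $U\in\overline{\PP}$ has nearby $K\in\PP$ in $\delta_\square$, Lemma~\ref{kozel2} gives $d_1(G',\PP)\to 0$; the triangle inequality finishes.

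For (c), the direction $\liminf d_1(G^n,\PP)\ge d_1(W_G,\overline{\PP})$ follows from compactness: extract optimal $H_n\in\PP$ on $V(G^n)$ with $W_{H_{n_k}}\to V\in\overline{\PP}$ in $\delta_\square$; since $W_{G^n}=W_G$ as graphons, Lemma~\ref{fura} combined with $\delta_1\le d_1$ and iso-invariance of $\overline{\PP}$ (so that $d_1(\cdot,\overline{\PP})=\delta_1(\cdot,\overline{\PP})$) gives $\liminf d_1(G^n,\PP)\ge\delta_1(W_G,V)\ge d_1(W_G,\overline{\PP})$. For the reverse, given $\eps>0$ and $U\in\overline{\PP}$ with $d_1(W_G,U)\le d_1(W_G,\overline{\PP})+\eps$, take $K_\ell\in\PP$ with $K_\ell\to U$ in $\delta_\square$ and measure-preserving $\phi_\ell$ with $\|W_{K_\ell}^{\phi_\ell}-U\|_\square\to 0$. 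With $k=|V(G)|$, $L_a=[(a-1)/k,a/k]$, and $n_\ell=|V(K_\ell)|$, partition $V(K_\ell)$ into $k$ nearly-equal parts $\{B_a^{(\ell)}\}$ by rounding $\phi_\ell^{-1}(L_a)$ to unions of intervals $J_i=[(i-1)/n_\ell,i/n_\ell]$, and relabel $K_\ell$ by a bijection $\sigma_\ell$ sending $B_a^{(\ell)}$ onto the $a$-th block of $V(G^{n_\ell})$. Then $K_\ell^{\sigma_\ell}\in\PP$; the cut-norm hypothesis forces the between-block edge densities $e_{ab}^{(\ell)}$ to converge to $\tilde U_{ab}:=k^2\int_{L_a\times L_b}U$, and the formula $d_1(G^{n_\ell},K_\ell^{\sigma_\ell})=k^{-2}\sum_{a,b}|e_{ab}^{(\ell)}-g_{ab}|$ (with $g_{ab}$ the value of $W_G$ on the $(a,b)$-block) gives $d_1(G^{n_\ell},\PP)\to\|\tilde U-W_G\|_1\le\|U-W_G\|_1$ by Jensen.

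The main technical obstacle is the rounding step in (c)'s upper bound: approximating the measurable sets $\phi_\ell^{-1}(L_a)$ by unions of intervals must produce an honest partition of $V(K_\ell)$ while keeping the edge-density perturbation vanishing. This works because $W_{K_\ell}$ is itself a stepfunction on the $J_i$'s, so $\phi_\ell$ may be replaced by a block permutation of intervals without altering the block-integrated statistics $\int_{L_a\times L_b}W_{K_\ell}^{\phi_\ell}$; after this replacement the partition is an honest vertex partition, and the density convergence follows directly from the cut-norm convergence.
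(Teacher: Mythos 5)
Parts (a) and (b) are correct. In (a) your route via $\Ge'(n,U)$ and Proposition~\ref{lezaras}(a) is sound and in fact dispenses with the paper's reduction to a $0$--$1$ valued $U$. In (b), your first direction (the uniform bound $d_1(W_H,\overline\PP)\to 0$ over $H\in\PP$ via blowups, Lemma~\ref{kozel2}, compactness and Lemma~\ref{fura}) is a genuinely different argument from the paper's, which passes to a convergent subsequence of near-optimal $H_n\in\PP$ and applies Theorem~\ref{PROPTEST-FN}; both work, and yours has the small advantage of making the uniformity explicit rather than by contradiction. The second direction follows the paper's $\Ge'$-sampling strategy.

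The gap is in the upper bound of (c), in the sentence claiming that since $W_{K_\ell}$ is a stepfunction on the $J_i$, the map $\phi_\ell$ ``may be replaced by a block permutation of intervals without altering the block-integrated statistics $\int_{L_a\times L_b}W_{K_\ell}^{\phi_\ell}$.'' This is false. (Also, the sets to round are $\phi_\ell(L_a)$, not $\phi_\ell^{-1}(L_a)$.) Writing $\mu_{a,i}=\lambda\bigl(J_i\cap\phi_\ell(L_a)\bigr)$, one has $\int_{L_a\times L_b}W_{K_\ell}^{\phi_\ell}=\sum_{i,j}\mu_{a,i}\mu_{b,j}(W_{K_\ell})_{ij}$, and for a general measure preserving $\phi_\ell$ the $\mu_{a,i}$ are genuinely fractional; no block permutation of the $J_i$ reproduces these bilinear forms exactly. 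For example, with $k=2$, if $\phi_\ell$ splits every $J_i$ evenly between $\phi_\ell(L_1)$ and $\phi_\ell(L_2)$, then every fractional block has density equal to the global edge density of $K_\ell$, whereas an honest bipartition of $[n_\ell]$ has block densities determined by the actual cut structure of $K_\ell$ (if $K_\ell$ is bipartite, these are $0$ and $1$). So the rounding step requires a real argument: either a randomized rounding with a concentration bound giving an error that vanishes with $n_\ell$, or an appeal to the fact that the cut distance between a step function $W_{K_\ell}$ and a graphon is achieved up to $o(1)$ by a node relabeling of $K_\ell$ --- this is precisely what the paper invokes when it says ``for an appropriate labeling of the nodes of $H_n$, we have $\|W_{H_n}-U\|_\square\to 0$.'' Once that relabeling fact is in place, the paper avoids your block-density computation altogether: Lemma~\ref{REV-CONV} (using that $W_G$ is $0$--$1$ valued) gives $\|W_{H_n}-W_G\|_1\to\|U-W_G\|_1$, and defining the equitable blowup $G'$ on $V(H_n)$ one bounds $d_1(G',H_n)=\|W_{G'}-W_{H_n}\|_1\le\|W_{G'}-W_G\|_1+\|W_G-W_{H_n}\|_1$ directly. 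Your block-density calculation and the Jensen step are fine \emph{conditionally} on having an honest equitable partition with the right densities, but you have not produced one, and the exactness claim that was supposed to do so is wrong.
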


\begin{proof}\killtext{GPR}
(a) Let $\delta>0$ and $U\in\overline\PP$ be such that
$\|W_G-U\|_1\le d_1(W_G,\overline\PP)+\delta$. Using Corollary
\ref{COR:HERED-SUPP}, we may assume that $U$ is a $0-1$-valued
function. From the fact that $\Ge(n,U)$ has property $\PP$ with
probability 1, it follows that $G'=\Ge'(n,U)$ has property $\PP$ with
probability 1. By Lemma \ref{LEM:GPRIMEL1}
\[
\E(d_1(G,G'))= \|W_G-U\|_1\le d_1(W_G,\overline\PP)+\delta.
\]
Hence there is an instance of $G'$ for which $G'\in\PP$ and
$d_1(G,G')\le d_1(W_G,\PP)+\delta$. Thus $d_1(G,\PP)\le d_1(G,G')\le
d_1(W_G,\overline\PP)+\delta$. Since $\delta$ is arbitrary, this
proves (a).

\smallskip

(b) Suppose not, then there exists a sequence of graphs $(G_n)$ with
$|V(G_n)|\to\infty$ such that $d_1(G_n,\PP)\to a$ and
$d_1(W_{G_n},\overline\PP)\to b$, where $a\not= b$. We may assume
that $V(G_n)=[q_n]$, where $q_n\to\infty$.

First, select graphs $H_n\in\PP$ such that $V(H_n)=V(G_n)$ and
$d_1(G_n,H_n)=d(G_n,\PP)$. By selecting a subsequence, we may assume
that the sequence $H_n$ is convergent; let $U\in\WW_0$ be its limit.
Clearly $U\in\overline\PP$. Then
$\delta_\square(W_{H_n},\overline\PP)\le
\delta_\square(W_{H_n},U)\to 0$, and hence $d_\square(W_{H_n},\overline\PP)\to
0$. By Theorem \ref{PROPTEST-FN}, this implies that
$d_1(W_{H_n},\overline\PP)\to 0$. But then in the inequality
\[
d_1(W_{G_n},\overline\PP) \le d_1(W_{G_n},W_{H_n})
+d_1(W_{H_n},\overline\PP)
\]
the first term on the right hand side tends to $a$, while the second
tends to $0$, showing that $b\le a$.

Second, fix $\eps>0$, and let $U_n\in\overline\PP$ be such that
$d_1(W_{G_n},U_n) \le d_1(W_{G_n},\overline{\PP})+ \eps$. By the
definition of testability of functions, there is an $m\ge 1$ such
that for $m\ge m_0$ and every $n$, we have that $\Ge(m,U_n)\in\PP'$
with probability at least $2/3$.

Similarly as in the proof of (a), consider the random graph
$G'=\Ge'(q_n,U_n)$. Clearly
\begin{equation}\label{GNGX}
d_1(G_n, \PP) \le d_1(G_n,G')+d_1(G', \PP).
\end{equation}
Here by Lemma \ref{LEM:GPRIMEL1}
\begin{equation}\label{EQ:EGNGX}
\E(d_1(G_n,G'))=d_1(W_{G_n},U_n).
\end{equation}
Next we show that, with probability tending to $1$ as $n\to\infty$,
we have
\begin{equation}\label{GNUN}
d_1(G_n,G')\le d_1(W_{G_n},U_n)+\eps.
\end{equation}
Indeed, the left hand side is the sum of $q_n/2$ independent random
variables, all $0-1$ valued, so this follows by the Law of Large
Numbers.

To estimate the other term in \eqref{GNGX}, let $Z$ be a random
$m$-element subset of $X$, and let $Y$ be a random $m$-elements
subset of $[0,1]$. The distributions of $Z$ and $Y$ are very close;
indeed, we can generate $Z$ by generating $Y$ and accepting it if its
elements fall into different intervals $[i/q_n, (i+1)/q_n]$,
$i=0,\dots,q_n-1$, and only regenerate otherwise. The probability
that we need to regenerate tends to $0$ as $n\to\infty$, so the total
variation distance of $Z$ and $Y$ tends to $0$. The probability that
$\Ge(Y,U_n)\in\PP'$ is at least $2/3$ as $\overline\PP$ is testable
by Lemma \ref{P-BARP-TEST}, and so the probability that
$\Ge(Z,U_n)\in\PP'$ is at least $1/2$. But $\Ge(Z,U_n)$ has the same
distribution as a random $m$-node subgraph of $G'$, and so by
testability,
\[
\Pr\bigl(d_1(G',\PP)\le\eps\bigr) + \frac13
\Pr\bigl(d_1(G',\PP)>\eps\bigr) \ge \frac12.
\]
This implies that with probability at least $1/4$,
\begin{equation}\label{GXPP}
d_1(G', \PP)\le\eps.
\end{equation}
With positive probability, both \eqref{GNUN} and \eqref{GXPP} occur,
and so by \eqref{GNGX} we have
\[
d_1(G_n, \PP) \le d_1(G_n,G')+d_1(G', \PP) \le d_1(W_{G_n},U_n)+\eps
+ \eps \le d_1(W_{G_n},\overline\PP)+3\eps.
\]
Sending $n\to\infty$ we see that $a\le b+3\eps$. Since $\eps$ was
arbitrary, it follows that $a\le b$, which is a contradiction.

(c) Let $\eps>0$, and let $U\in\overline{\PP}$ be a function such
that $\|W_G-U\|_1\le d_1(W_G,U)+\eps$. Let $H_n$ be a sequence of
graphs such that $H_n\to U$ and $H_n\in\PP$. Then for an appropriate
labeling of the nodes of $H_n$, we have $\|W_{H_n}-U\|_\square \to
0$. Since $W_G$ is $0-1$ valued, Lemma \ref{REV-CONV} implies that
$\|W_{H_n}-W_G\|_1 \to \|U-W_G\|_1$. Let $V(G)=\{1,\dots,k\}$ and
$V(H_n)=\{1,\dots,m\}$. Choose $n$ large enough so that
$\|W_{H_n}-W_G\|_1 \le\|U-W_G\|_1+\eps$ and $m\ge k/\eps$.

Let $V_i=\bigl\{\lfloor (i-1)m/k\rfloor +1,\dots, \lfloor
im/k\rfloor\bigr\}$ for $i=1,\dots,k$. Then $(V_1,\dots,V_k)$ is a
partition of $V(H_n)$ into $k$ almost equal classes. Define a graph
$G'$ on $\{1,\dots,k\}$ by connecting $u\in V_i$ to $v\in V_j$ if and
only if $ij\in E(G)$. Then $G'$ is an equitable blowup of $G$.
Furthermore, $W_{G'}$ and $W_G$ differ only on stripes of width less
than $1/m$ along the orders of the squares on which $W_G$ is
constant, so $\|W_{G'}-W_G\|_1 \le 2k/m\le 2\eps$. Thus
\begin{align*}
 d_1(G',\PP)&\le d_1(G',H_n) = \|W_{G'}-W_{H_n}\|_1 \le
\|W_{G'}-W_G\|_1+\|W_G-W_{H_n}\|_1\\
&\le 2\eps + \bigl(\|U-W_G\|_1+\eps\bigr) \le
d_1(W_G,\overline{\PP})+4\eps.
\end{align*}
Since $\eps$ was arbitrary, this proves that
\[
\liminf d_1(G^n,\PP) \le d_1(W_G,\overline{\PP}).
\]
To prove the converse, let $a=\liminf d_1(G^n,\PP)$, and let
$H_n\in\PP$ be chosen so that $V(H_n)=V(G^n)$ and $d_1(H_n, G^n) =
d_1(G^n,\PP)$. Select a subsequence such $d_1(H_n-G^n)\to a$, and
choose a further subsequence so that $H_n$ is convergent. Let $H_n\to
U\in\overline{\PP}$. We have $\delta_\square(W_{H_n},U)\to 0$ and
$\delta_\square(W_{G^n},W_G)\to 0$, hence by Lemma \ref{fura},
\[
\delta_1(W_G,U) \le \liminf\delta_1(W_{H_n},W_{G_n}) \le a.
\]
Hence $d_1(W_G,\overline{\PP})\le \delta_1(W_G,U)\le a$.
\end{proof}

\begin{prop}\label{prop:fura1}
Let $\PP$ be any graph property and $G_n\to W$, a convergent graph
sequence. Then
\[
\liminf_{n\to\infty} d_1(G_n,\PP) \ge d_1(W,\overline{\PP}),
\]
\end{prop}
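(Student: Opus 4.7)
The plan is to combine the compactness of $(\WW_0,\delta_\square)$ with Lemma \ref{fura}, which is precisely the lower semicontinuity statement for $\delta_1$ under $\delta_\square$-convergence. Set $a=\liminf_{n\to\infty} d_1(G_n,\PP)$; we may assume $a<\infty$, since otherwise the claim is trivial. Passing to a subsequence, we may arrange that $d_1(G_n,\PP)\to a$ and in particular $d_1(G_n,\PP)<\infty$ for all large $n$, so that $\PP$ contains at least one graph on the vertex set of each $G_n$.

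For each such $n$, choose $H_n\in\PP$ with $V(H_n)=V(G_n)$ and $d_1(H_n,G_n)\le d_1(G_n,\PP)+1/n$. By compactness of $(\WW_0,\delta_\square)$, we may pass to a further subsequence so that $W_{H_n}\to U$ in $\delta_\square$ for some $U\in\WW_0$. Since $H_n\in\PP$ and $|V(H_n)|=|V(G_n)|\to\infty$, the definition of the closure gives $U\in\overline{\PP}$.

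Now $W_{G_n}\to W$ and $W_{H_n}\to U$ in $\delta_\square$, so Lemma \ref{fura} yields
\[
\delta_1(W,U)\le \liminf_{n\to\infty} \delta_1(W_{G_n},W_{H_n}) \le \liminf_{n\to\infty} d_1(W_{G_n},W_{H_n}) = \liminf_{n\to\infty} d_1(G_n,H_n) = a.
\]
Because $\overline{\PP}$ is closed under isomorphism (any function isomorphic to a $\delta_\square$-limit of graphs in $\PP$ is itself such a limit), every $U^\phi$ lies in $\overline{\PP}$, and therefore
\[
d_1(W,\overline{\PP})\le \inf_\phi \|W-U^\phi\|_1 = \delta_1(W,U)\le a,
\]
which is the claimed inequality. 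The only substantive ingredient here is Lemma \ref{fura}; once one has that lower semicontinuity statement, the rest is a routine compactness/diagonal extraction, so I do not expect any real obstacle.
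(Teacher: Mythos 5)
Your proof is correct, and it follows the same overall strategy as the paper's: choose near-optimal $H_n\in\PP$ on the vertex set of $G_n$, pass to a $\delta_\square$-convergent subsequence $H_n\to U\in\overline{\PP}$, and apply a lower-semicontinuity lemma to conclude. The one genuine difference is the choice of lemma: the paper invokes Lemma \ref{fura1}, which is phrased in terms of cut-\emph{norm} convergence $\|W_{G_n}-W\|_\square\to0$ and $\|W_{H_n}-U\|_\square\to0$, whereas you invoke Lemma \ref{fura}, phrased in terms of $\delta_\square$-convergence. Your choice is actually the cleaner one here: $G_n\to W$ and $H_n\to U$ only give $\delta_\square$-convergence directly, and since $G_n$ and $H_n$ share a vertex set, one cannot independently relabel the two sequences to obtain cut-norm convergence to both $W$ and $U$ simultaneously; justifying the paper's step rigorously would require an alignment argument (e.g.\ via two-colored graph limits) that the paper does not spell out. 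Using Lemma \ref{fura} avoids this issue entirely, at the small cost of the final observation, which you correctly supply, that $\overline{\PP}$ is isomorphism-invariant so $d_1(W,\overline{\PP})\le\delta_1(W,U)$. Your explicit handling of the $a=\infty$ case is also a small but legitimate addition; the paper's choice of $H_n$ achieving exact equality is fine, since there are only finitely many graphs on a given finite vertex set, so your $+1/n$ slack is unnecessary but harmless.
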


\begin{proof}
Let $H_n\in\PP$ be such that $V(G_n)=V(H_n)$ and
$d_1(G_n,H_n)=d_1(G_n,\PP)$. We may select a subsequence so that
$H_n\to U$ for some $U\in\WW_0$. Clearly $U\in\overline{\PP}$.
Furthermore, $\|W_{G_n}-W\|_\square\to 0$ and
$\|W_{H_n}-U\|_\square\to 0$, so by Lemma \ref{fura1}, we have
\[
d_1(W,\PP) \le \|W-U\|_1 \le \liminf \|W_{G_n}-W_{H_n}\|_1= \liminf
d_1(G_n,\PP),
\]
a contradiction.
\end{proof}

\subsubsection{Monotone closure}

For two functions $U,W\in\WW_0$ we write $U\preceq W$ if there exist
functions $U',W'\in\WW$ such that $U\cong U'$, $W\cong W'$, and
$U'\le W'$ almost everywhere.

Let $\PP$ a graph property. By its {\it upward closure} we mean the
graph property $\PP^\uparrow$ consisting of those graphs that have a
spanning subgraph in $\PP$. For a function property $\RR$, we define
its {\it upward closure} to consist of those functions $W\in\WW_0$
for which there exists a function $U\in\RR$ such that $U\preceq W$.
In both versions, the {\it downward closure} is defined analogously.

The following theorem, whose proof is surprisingly nontrivial,
asserts that closure and upward closure commute.

\begin{theorem}\label{THM:UP}
For every graph property $\PP$,
\[
\overline{\PP^\uparrow} =(\overline{\PP})^\uparrow.
\]
\end{theorem}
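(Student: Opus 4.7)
The plan is to prove the two inclusions separately. The forward inclusion is constructive, combining Lemma \ref{REV-CONV-1} with the sampling procedure from Section \ref{SEC:WRAND}, while the reverse inclusion rests on a joint-compactness argument.

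\textit{Direction} $(\overline{\PP})^\uparrow \subseteq \overline{\PP^\uparrow}$: Let $W\in(\overline{\PP})^\uparrow$ and pick $U\in\overline{\PP}$ with $U\preceq W$; replacing $U$ by a suitable isomorph, I assume $U\le W$ a.e. Choose $G_n\in\PP$ with $G_n\to U$ in $\delta_\square$, and after relabelling vertices assume $\|W_{G_n}-U\|_\square\to 0$. Following the proof of Lemma \ref{REV-CONV-1} in the case $U\le W$, set $p=(W-U)/(1-U)$ on $\{U<1\}$ and $p=0$ elsewhere, and let $W_n = W_{G_n}+(1-W_{G_n})\,p$. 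Then $W_n\in\WW_0$, $W_n\ge W_{G_n}$, and $W_n\equiv 1$ on every square $J_i\times J_j$ on which $W_{G_n}\equiv 1$; a direct expansion gives $W_n-W=(W_{G_n}-U)(1-p)$, so $\|W_n-W\|_\square\to 0$ by Lemma \ref{WSTAR}. Now let $H_n=\Ge'(n,W_n)$ be the sampled random graph of Section \ref{SEC:WRAND}; the property just noted forces every edge of $G_n$ to be retained in $H_n$ with probability one, so $G_n\subseteq H_n$ and therefore $H_n\in\PP^\uparrow$. Lemma \ref{LEM:GPRIME}, whose bound is uniform in the underlying function, gives $\delta_\square(W_{H_n},W_n)\to 0$ with probability tending to $1$; combined with $\|W_n-W\|_\square\to 0$ this produces, with positive probability, a sequence $H_n\in\PP^\uparrow$ with $H_n\to W$, so $W\in\overline{\PP^\uparrow}$.

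\textit{Direction} $\overline{\PP^\uparrow}\subseteq(\overline{\PP})^\uparrow$: Let $W\in\overline{\PP^\uparrow}$, pick $H_n\in\PP^\uparrow$ with $H_n\to W$, and choose spanning subgraphs $G_n\subseteq H_n$ with $G_n\in\PP$. The crucial analytic step is to extract, along a subsequence, a \emph{single} sequence of measure-preserving maps $\phi_n$ for which $\|W_{G_n}^{\phi_n}-U\|_\square\to 0$ and $\|W_{H_n}^{\phi_n}-W'\|_\square\to 0$ simultaneously, for some $U,W'\in\WW_0$ with $W'\cong W$. This joint compactness for pairs of graphs can be obtained by regarding $(G_n,H_n\setminus G_n)$ as an edge-three-colouring of $K_{V(H_n)}$ and applying the standard multicoloured extension of graphon compactness (equivalently, by running Szemer\'edi regularity on both sequences simultaneously and diagonalising over $\eps$). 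Once such $\phi_n$ is in hand, the pointwise inequality $W_{G_n}^{\phi_n}\le W_{H_n}^{\phi_n}$ survives the common relabelling, and testing against indicator functions of measurable sets via Lemma \ref{WSTAR} gives $\int_A U\le\int_A W'$ for every measurable $A\subseteq[0,1]^2$, hence $U\le W'$ a.e. Since $U\in\overline{\PP}$ (because $G_n\to U$) and $W'\cong W$, this yields $U\preceq W$ and thus $W\in(\overline{\PP})^\uparrow$.

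The main obstacle is the joint compactness used in the second direction: one must find a single sequence of relabellings that works in the cut norm for both $G_n$ and $H_n$, which does not follow mechanically from the compactness of $(\WW_0,\delta_\square)$ applied to either sequence in isolation, and where weak-$*$ extraction alone is insufficient because weak-$*$ limits of a sequence of isomorphs of $W$ need not lie in the isomorphism class of $W$. The first direction, by contrast, is essentially a routine combination of the ``graphon interpolant'' $W_n$ provided by Lemma \ref{REV-CONV-1} with the sampling estimate of Lemma \ref{LEM:GPRIME}.
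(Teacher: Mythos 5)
Your proof is correct and takes essentially the same approach as the paper: the inclusion $\overline{\PP^\uparrow}\subseteq(\overline{\PP})^\uparrow$ via joint convergence of the pair $(G_n,H_n)$ as edge-colored graphs (the paper cites \cite{LSz3} for exactly this), and the reverse inclusion by building a graphon $W_n\ge W_{G_n}$ with $W_n\equiv 1$ wherever $W_{G_n}\equiv 1$ and then sampling a random graph that automatically contains $G_n$ as a spanning subgraph. The only variations from the paper are cosmetic: for the interpolant the paper averages $U,V$ over the partition $\LL_n$ and uses the resulting stepfunction $W'_n=V_\LL+\frac{1-V_\LL}{1-U_\LL}(W_{G_n}-U_\LL)$ together with the weighted-graph sampling estimate from \cite{BCLSV1}, whereas you use $W_n=W_{G_n}+(1-W_{G_n})p$ with $p=(W-U)/(1-U)$ and Lemma \ref{LEM:GPRIME}; and your opening line ``replacing $U$ by a suitable isomorph, I assume $U\le W$'' should really be ``replacing $U$ \emph{and} $W$ by suitable isomorphs'' (one may not be able to fix $W$ and adjust only $U$), which is harmless here since $\overline{\PP^\uparrow}$ is isomorphism-invariant, as the paper makes explicit by working with $V\cong W$.
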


\begin{proof}
First, let $W\in\overline{\PP^\uparrow}$. Then there exists a graph
sequence $G_n\to W\in\WW_0$ such that $G_n$ has a spanning subgraph
$G_n'\in\PP$. We consider the pair $(G_n,G_n')$ as the graph $G_n$ in
which the edges of $G_n'$ colored red, the remaining edges are
colored blue. We may choose a subsequence of the indices so that the
remaining sequence is convergent as {\it 2-edge-colored graphs},
meaning that for every 2-edge-colored simple graph $F$, the sequence
of densities $t(F,G_n)$ of the densities of color-preserving
homomorphisms is convergent. It is shown in \cite{LSz3} that the
limit object of a such a sequence can be described by a pair of
functions $U,V\in\WW_0$, such that $V$ is the limit of the sequence
$(G_n)$, $U$ is the limit of the sequence $(G_n')$, and $U\le V$
almost everywhere. Hence $V\cong W$ and $U\in\overline{\PP}$. This
proves that $W\in (\overline{\PP})^\uparrow$.

Conversely, let $W\in (\overline{\PP})^\uparrow$, then there is a
$U\in\overline{\PP}$ and a $V\cong W$ such that $U\le V$. Let
$G_n\in\PP$ be such that $G_n\to U$. By Lemma 4.16 in \cite{BCLSV1},
we may label the graphs $G_n$ so that $\|W_{G_n}-U\|_\square\to 0$.
Let $\LL=\LL_n$ denote the partition of $[0,1]$ into $N=|V(G_n)|$
equal intervals $\{I_1,\dots,I_N\}$, and for $W\in\WW$, let $W_\LL$
denote the function obtained by replacing $W$ by its average on each
of the intervals $I_i\times I_j$. Clearly $U_\LL \le V_\LL$, and
\begin{equation}\label{EQ:WWL}
\|U-U_\LL\|_\square \to 0, \qquad \|V-V_\LL\|_\square \to 0
\end{equation}
(this holds even with the $L_1$-norm in place of the cut norm), and
so
\begin{equation}\label{EQ:ULWGN}
\|U_\LL-W_{G_n}\|_\square\to 0.
\end{equation}
Define
\[
W'_n=V_\LL+\frac{1-V_\LL}{1-U_\LL}(W_{G_n}-U_\LL)
\]
(where the second term is $0$ wherever $U_\LL=1$). Then
\[
W'_n\le V_\LL+\frac{1-V_\LL}{1-U_\LL}(1-U_\LL)=1,
\]
and
\[
W'_n= V_\LL\Bigl(1-\frac{W_{G_n}-U_\LL}{1-U_\LL}\Bigr) +
\frac{W_{G_n}-U_\LL}{1-U_\LL} \ge
U_\LL\Bigl(1-\frac{W_{G_n}-U_\LL}{1-U_\LL}\Bigr)+
\frac{W_{G_n}-U_\LL}{1-U_\LL} = W_{G_n}.
\]
By Lemma \ref{WSTAR},
\begin{equation}\label{EQ:WNVL}
\|W'_n-V_\LL\|_\square =
\Bigl\|\frac{1-V_\LL}{1-U_\LL}(W_{G_n}-U_\LL)\Bigr\|_\square \to 0
\qquad (n\to\infty).
\end{equation}
Since $W'_n$ is a stepfunction that is constant on intervals $I\times
J$ $(I,J\in\LL)$, it can be viewed as $W_{H_n}$ for some weighted
graph $H_n$ on $[N]$. Create a random graph $G_n'$ as follows: for
$1\le i < j\le N$, connect $i$ and $j$ with probability equal to
weight of the edge in $H_n$. Lemma 4.3 in \cite{BCLSV1} implies that
with probability at least $1-e^{-N}$,
\[
\|W_{G'_n}-W_{H_n}\|_\square \le \frac{10}{\sqrt{N}},
\]
and hence
\begin{equation}\label{EQ:HNGN}
\|W_{G'_n}-W'_n\|_\square \to 0 \qquad (n\to\infty).
\end{equation}
Trivially, $G_n$ is a subgraph of $G_n'$ with probability $1$, and
\eqref{EQ:WWL}, \eqref{EQ:ULWGN} and \eqref{EQ:HNGN} imply that
\[
\|W_{G'_n}-V\|_\square \le \|W_{G'_n}-W'_n\|_\square +
\|W'_n-V_\LL\|_\square + \|V_\LL-V\|_\square \to 0.
\]
This proves that $V\in\overline{\PP^\uparrow}$, and so
$W\in\overline{\PP^\uparrow}$.
\end{proof}

\subsubsection{Robustness}

The following definition will be important in our characterization of
testable graph properties.

\begin{definition}
A graph property $\PP$ is {\it robust}, if for every $\eps>0$ there
are numbers $n=n(\eps)>0$ and $\eps'>0$ such that if $G$ is a graph
with $|V(G)|\geq n(\eps)$ and $d_1(W_G,\overline{\PP})\leq \eps'$,
then $d_1(G,\PP)\leq\eps$.
\end{definition}

Proposition \ref{G2WG}(b) implies that every testable property, in
particular every hereditary property, is robust. The following fact,
which is an immediate consequence of Proposition \ref{G2WG}, provides
a combinatorial criterion for robustness.

\begin{prop}\label{BLOWUP-ROB}
A graph property $\PP$ is robust if and only if for every $\eps>0$
there is an $\eps'>0$ and an $n_\eps\ge 1$ such that if $G$ is a
graph with $|V(G)|\ge n_\eps$ and $G$ has infinitely many equitable
blowups $G'$ with $d_1(G',\PP)\le\eps'$, then $d_1(G,\PP)\le\eps$.
\end{prop}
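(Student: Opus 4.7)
The plan is to apply Proposition \ref{G2WG}(c) directly. That proposition asserts that for every graph $G$,
\[
\liminf_n d_1(G^n, \PP) \;=\; d_1(W_G, \overline{\PP}),
\]
where $G^1, G^2, \ldots$ enumerate the equitable blowups of $G$. This identity is the bridge between the ``distance from $\overline{\PP}$'' formulation of robustness and the ``infinitely many blowups close to $\PP$'' formulation of the proposition; once it is in hand, the proof is essentially bookkeeping about liminfs.

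For the forward direction I would assume $\PP$ is robust with parameters $n(\eps)$ and $\eps'$, take $n_\eps = n(\eps)$ and keep the same $\eps'$. If $G$ satisfies $|V(G)| \ge n_\eps$ and admits infinitely many equitable blowups $G'$ with $d_1(G', \PP) \le \eps'$, then $\liminf_n d_1(G^n, \PP) \le \eps'$, so Proposition \ref{G2WG}(c) gives $d_1(W_G, \overline{\PP}) \le \eps'$, and robustness yields $d_1(G, \PP) \le \eps$.

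For the converse, given the blowup condition with parameters $n_\eps, \eps'$ at tolerance $\eps$, I would set $\eps'' = \eps'/2$ and claim the pair $(n_\eps, \eps'')$ witnesses robustness at $\eps$. Indeed, if $|V(G)| \ge n_\eps$ and $d_1(W_G, \overline{\PP}) \le \eps''$, then by Proposition \ref{G2WG}(c), $\liminf_n d_1(G^n, \PP) \le \eps'' < \eps'$, so infinitely many blowups $G^n$ satisfy $d_1(G^n, \PP) \le \eps'$. The blowup condition then gives $d_1(G, \PP) \le \eps$. The only point requiring any care is this strict-vs-nonstrict issue on the liminf --- one cannot take $\eps'' = \eps'$ directly, since $\liminf \le \eps'$ need not guarantee that infinitely many terms are actually $\le \eps'$ --- and there is no further technical obstacle beyond the invocation of Proposition \ref{G2WG}(c).
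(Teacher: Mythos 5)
Your proof is correct and matches the paper's intent: the paper states Proposition \ref{BLOWUP-ROB} without proof, remarking only that it is ``an immediate consequence of Proposition \ref{G2WG},'' and your argument makes that derivation explicit via part (c), including the correct observation that one should shrink $\eps'$ in the converse to avoid the $\liminf$ boundary case.
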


\subsubsection{Characterizing testability of graph properties}

Our next main theorem shows the relationship between analytic and
graph theoretic testability.

\begin{theorem}\label{char1}
A graph property $\PP$ is testable if and only if it is robust and
its closure $\overline{\PP}$ is testable.
\end{theorem}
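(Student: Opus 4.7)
The plan is the following. For the forward direction, that $\PP$ testable implies $\overline{\PP}$ testable and $\PP$ robust, both halves come essentially for free from results already in hand: Lemma~\ref{P-BARP-TEST} gives testability of $\overline{\PP}$, while Proposition~\ref{G2WG}(b), which says $|d_1(G,\PP)-d_1(W_G,\overline{\PP})|\to 0$ as $|V(G)|\to\infty$, lets me translate $d_1(W_G,\overline{\PP})\le\eps'$ into $d_1(G,\PP)\le\eps$ for large enough $|V(G)|$, giving robustness.

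For the converse, assume $\overline{\PP}$ is testable as a graphon property and $\PP$ is robust. The first step I would establish is a short compactness claim: for every $\eta>0$ there is an $n_0$ such that if $G\in\PP$ and $|V(G)|\ge n_0$, then $\delta_\square(W_G,\overline{\PP})<\eta$. This is by contradiction, using compactness of the space of graphons under $\delta_\square$: a violating sequence $G_n\in\PP$ with $|V(G_n)|\to\infty$ and $\delta_\square(W_{G_n},\overline{\PP})\ge\eta$ would $\delta_\square$-subconverge to some $W$, and $W\in\overline{\PP}$ by definition of closure, contradicting $\delta_\square(W_{G_n},W)\to 0$.

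Next I would convert a test property $\RR'$ for $\overline{\PP}$ into a test property $\PP'$ for $\PP$, allowing $\PP'$ to depend on $\eps$ (as permitted by the remark after Definition~\ref{PROPTEST}). For the far-from-$\PP$ half of condition (b): robustness converts $d_1(G,\PP)\ge\eps$, with $|V(G)|$ large, into $d_1(W_G,\overline{\PP})\ge\eps'$ for some $\eps'=\eps'(\eps)$, and the functional test tuned to threshold $\eps'/2$ then gives $\Pr(\Ge(k,W_G)\in\RR')\le 1/6$ for all sufficiently large $k$; the total-variation gap between $\Ge(k,G)$ and $\Ge(k,W_G)$ is $O(k^2/|V(G)|)$, which is absorbed into the $1/3$ budget. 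For condition (a): the compactness claim combined with \eqref{kozel-fn} summed over all $F\in\FF_k$ yields $|\Pr(\Ge(k,W_G)\in\RR')-\Pr(\Ge(k,U)\in\RR')|\le 2^{\binom{k}{2}}\binom{k}{2}\delta_\square(W_G,U)$ for a near-optimal $U\in\overline{\PP}$, pushing $\Pr(\Ge(k,W_G)\in\RR')$ above $5/6$ once $|V(G)|$ is large enough relative to the fixed $k=k_\eps$. The finitely many $G\in\PP$ with $|V(G)|<n_0$ can be handled by enlarging $\PP'$ on small graphs.

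The main obstacle will be balancing three error sources: (i) the with/without replacement discrepancy between $\Ge(k,G)$ and $\Ge(k,W_G)$; (ii) the cut-distance slack $\delta_\square(W_G,\overline{\PP})$ from the compactness claim; and (iii) the amplification $2^{\binom{k}{2}}$ incurred when summing \eqref{kozel-fn} across all potential $k$-node outputs. The ordering of choices will be crucial: first fix $\eps'$ from robustness, then fix the sample size $k$ required by the graphon test of $\overline{\PP}$ at threshold $\eps'/2$, and only then choose $n_0$ large enough (depending on $k$) to suppress (i) and (ii) well below the remaining budget separating $1/3$ from $2/3$.
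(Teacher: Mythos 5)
The forward direction of your proposal matches the paper exactly (Proposition~\ref{G2WG}(b) for robustness, Lemma~\ref{P-BARP-TEST} for testability of the closure), and your compactness claim for the converse is likewise implicit in the paper's point~(i). The gap is in how you build the test property $\PP'$.

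You propose to take the test property $\RR'$ certifying testability of $\overline{\PP}$ and carry it over wholesale, comparing $\Pr(\Ge(k,G)\in\RR')$ with $\Pr(\Ge(k,U)\in\RR')$ for $U\in\overline{\PP}$ via the sum of \eqref{kozel-fn} over $F\in\FF_k$, which costs a factor $2^{\binom{k}{2}}\binom{k}{2}$. Your stated order of quantifiers is: first $\eps'$ from robustness, then $k=k_\eps$ from the graphon test, then $n_0$ depending on $k$. But Definition~\ref{PROPTEST}(a) demands the conclusion for \emph{every} $1\le k\le |V(G)|$, and (b) for \emph{every} $k_\eps\le k\le |V(G)|$, with a single test property. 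Once $n_0$ (hence the attainable $\delta_\square(W_G,\overline{\PP})=\eta$) is fixed, the bound $2^{\binom{k}{2}}\binom{k}{2}\eta$ is vacuous for all $k\gtrsim\sqrt{\log(1/\eta)}$, so your condition~(a) argument collapses for larger $k$. The same problem afflicts the with/without-replacement bridge: the $O(k^2/|V(G)|)$ total-variation gap between $\Ge(k,G)$ and $\Ge(k,W_G)$ (coming from~\eqref{T-IND}) is only small when $k\ll\sqrt{|V(G)|}$, yet both conditions must hold out to $k=|V(G)|$. Fixing $k$ first trades away exactly the uniformity the definition requires.

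The paper avoids this by \emph{not} transporting $\RR'$; it uses testability of $\overline{\PP}$ only through Theorem~\ref{PROPTEST-FN} (to get an $\eps''$ with $d_\square(\cdot,\overline{\PP})<\eps''\Rightarrow d_1(\cdot,\overline{\PP})<\eps'$) and then defines $\PP'$ directly as the graph property $\{H:\ d_\square(W_H,\overline{\PP})\le\eps''/2\}$. Condition~(a) is checked via the induced-subgraph sampling bound (Theorem~2.9/2.11 of \cite{BCLSV1}), which gives $\delta_\square(\Ge(m,G),G)\lesssim 1/\sqrt{\log m}$ with high probability; crucially this estimate \emph{improves} as $m$ grows, so a single $n_\eps$ works uniformly for every $m\ge n_\eps$. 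Condition~(b) is a short contradiction: a good $\PP'$-sample close to $W_G$ in $\delta_\square$ would force $d_\square(W_G,\overline{\PP})<\eps''$, hence $d_1(W_G,\overline{\PP})<\eps'$, hence $d_1(G,\PP)<\eps$ by robustness. If you want to repair your proposal, you should replace the transported $\RR'$ by this $d_\square$-ball test and replace the $2^{\binom{k}{2}}$-amplified induced-density comparison by the uniform $\delta_\square$-concentration of induced subgraphs.
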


\begin{proof}
We have seen that every testable graph property is robust
(Proposition \ref{G2WG}(b)), and that its closure is testable (Lemma
\ref{P-BARP-TEST}). So to complete the proof, it suffices to prove
that if $\PP$ is robust and $\overline{\PP}$ is testable then $\PP$
is testable.

Let $\eps>0$. By the robustness of $\PP$, there is an $\eps'>0$ and
$k_\eps\ge 1$ such that if $|V(G)|\ge k_\eps$ and
$d_1(W_G,\overline{\PP})<\eps'$, than $d_1(G,\PP)<\eps$. By the
testability of $\overline\PP$ and Theorem \ref{PROPTEST-FN}, there is
an $\eps''>0$ such that if $d_\square(W, \overline{\PP})< \eps''$
then $d_1(W, \overline{\PP})<\eps'$. By the definition of $\bar\PP$
and by Theorem 2.9 in \cite{BCLSV1}, there exists an $n_\eps\ge
k_\eps$ such that

(i) for every graph $G\in\PP$ with $|V(G)|\geq n_\eps$, we have
$d_{\square}(W_G,\overline{\PP})<\eps''/4$;

(ii) for every graph $G$ with $|V(G)|\geq n_\eps$ and every $n_\eps
\le m\le |V(G)|$, we have $\delta_\square(\Ge(m,G),G)<\eps''/4$ with
probability at least $2/3$.

Let $\PP'$ denote the property of a graph $G$ that
$d_{\square}(W_G,\overline{\PP})\le \eps''/2$ (this depends on
$\eps$, but as we remarked after the definition, this is OK). We
claim that $\PP'$ is a good test property for $\PP$ (for the given
$\eps$).

Let $G$ be a graph, $n_\eps\le n\le |V(G)|$, and let $H=\Ge(n,G)$.
First, suppose that $G\in\PP$. By (i),
$d_{\square}(W_G,\overline{\PP})<\eps''/4$. Further by (ii),
$\delta_\square(H,G)=\delta_\square(W_H,W_G)\le \eps''/4$ with
probability at least $2/3$. In every such case,
\[
d_{\square}(W_H,\overline{\PP})\le \delta_\square(W_H,W_G) +
d_{\square}(W_G,\overline{\PP})< \eps''/2.
\]
Thus $H$ has property $\PP'$.

Second, suppose that $d_1(G,\PP)\geq \eps$; we want to prove by
contradiction that $H$ does not have the property $\PP'$ with
probability at least $2/3$. Assume that it is not true, then with
probability larger than $1/3$, $d_\square(W_H,\overline\PP)\le
\eps''/2$. We have $d_{\square}(W_G,W_H)\le \eps''/4$ with
probability more than $2/3$. This implies that there exists at least
one induced subgraph $H$ of $G$ with $n$ nodes such that
$d_{\square}(W_G,W_H)<\eps''/4$ and
$d_{\square}(W_H,\overline{\PP})\leq \eps''/2$. We obtain that
$d_{\square}(W_G,\overline{\PP})<\eps''$. It follows from the
testability of $\overline{\PP}$ that $d_1(W_G,\overline{\PP})<\eps'$.
It follows from the robustness of $\PP$ that $d_1(G,\PP)< \eps$, a
contradiction.
\end{proof}

A further connection between testable graph properties and graphon
properties is the following fact:

\begin{theorem}\label{genprop}
For every closed testable graphon property $\RR$ there exists a
testable graph property $\PP$ such that $\RR=\overline{\PP}$.
\end{theorem}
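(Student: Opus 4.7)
The natural construction is to take $\PP$ to be a cut-neighbourhood of $\RR$ among graphs whose tolerance shrinks with $|V(G)|$. Concretely, I would set
\[
\PP \;=\; \Bigl\{G : |V(G)|\ge n_0,\; \delta_\square(W_G,\RR) \le \tfrac{50}{\sqrt{\log\log|V(G)|}}\Bigr\},
\]
where $n_0$ is large enough that the denominator is meaningful. This set is iso-invariant because $W_G$ is well-defined up to measure preserving transformations, so it is a bona fide graph property. Once $\PP$ is in hand, testability of $\PP$ reduces, via Theorem~\ref{char1} and the hypothesised testability of $\RR$, to verifying (i) $\overline{\PP}=\RR$ and (ii) $\PP$ is robust.

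Step (i) is short. The inclusion $\RR\subseteq\overline{\PP}$ is witnessed by $W$-random samples: for $W\in\RR$, Theorem~\ref{THM:SAMPLE2} gives $\delta_\square(W_{\Ge(n,W)},W)\le 10/\sqrt{\log n}$ with high probability, which is eventually dominated by the tolerance $50/\sqrt{\log\log n}$, so $\Ge(n,W)\in\PP$ for large $n$; and $\Ge(n,W)\to W$ almost surely. Conversely, if $G_n\in\PP$ and $G_n\to W$, then $\delta_\square(W_{G_n},\RR)\to 0$ produces $U_n\in\RR$ with $\delta_\square(W_{G_n},U_n)\to 0$, combined with $\delta_\square(W_{G_n},W)\to 0$ this gives $\delta_\square(U_n,W)\to 0$. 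Since $\RR$ is closed in $\|.\|_\square$ and iso-invariant, it is closed in $\delta_\square$, so $W\in\RR$.

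Step (ii) is the crux, and is exactly what Corollary~\ref{COR:SQUARECLOSE} is designed for. Let $N=|V(G)|$ be large and suppose $d_1(W_G,\overline{\PP})=d_1(W_G,\RR)\le\eps'$. Pick $U\in\RR$ with $\|W_G-U\|_1$ within arbitrarily small slack of this infimum, and feed $(G,U)$ into the corollary to obtain a graph $G'$ on the vertex set $[N]$ with
\[
\|W_{G'}-U\|_1\le 4\|W_G-U\|_1 \qquad\text{and}\qquad \delta_\square(W_{G'},U)\le \tfrac{50}{\sqrt{\log\log N}}.
\]
The second inequality places $G'$ in $\PP$, and the first yields $d_1(G,G')=\|W_G-W_{G'}\|_1 \le 5\|W_G-U\|_1$ by the triangle inequality. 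Taking $\eps'=\eps/5$ therefore gives $d_1(G,\PP)\le d_1(G,G')\le\eps$, which is the definition of robustness.

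I do not foresee a genuine obstacle, since Corollary~\ref{COR:SQUARECLOSE} is tailor-made to convert a generic $L_1$-approximation of $W_G$ by an arbitrary function in $\WW_0$ into an $L_1$-approximation by a bona fide graph that is simultaneously cut-close to the original function. The only mild delicacy is choosing the tolerance sequence $f(n)=50/\sqrt{\log\log n}$ so as to simultaneously majorise the sampling error $10/\sqrt{\log n}$ of Theorem~\ref{THM:SAMPLE2} (needed for step (i)) and the cut-approximation error of Corollary~\ref{COR:SQUARECLOSE} (needed for step (ii)); both hold for the choice above.
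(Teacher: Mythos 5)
Your proof is correct, and while it uses the same high-level strategy as the paper (verify via Theorem \ref{char1} that $\PP$ is robust and that $\overline{\PP}=\RR$, with the robustness step powered by Corollary \ref{COR:SQUARECLOSE}), the construction of $\PP$ and the argument for $\RR\subseteq\overline{\PP}$ take a genuinely different and cleaner route. The paper builds the threshold $\eps_n$ from a compactness argument: it covers the compact set $\RR$ by finitely many $\delta_\square$-balls, observes that each ball is hit by some graph on every sufficiently large node count, and uses this covering both to define a more intricate $\eps_n$ (the maximum of $50/\sqrt{\log\log n}$ and a supremum coming from the cover sizes) and to exhibit, for each $U\in\RR$, graphs in $\PP$ converging to $U$. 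You bypass the covering entirely by taking the simple threshold $50/\sqrt{\log\log n}$ and witnessing $\RR\subseteq\overline{\PP}$ with $W$-random graphs via Theorem \ref{THM:SAMPLE2}, whose sampling error $10/\sqrt{\log n}$ is eventually dominated by your threshold; Borel--Cantelli plus $\Ge(n,W)\to W$ a.s.\ then closes the inclusion. This is shorter and avoids the fussy definition of $\eps_n$ altogether. A further point in your favor: you write the membership condition for $\PP$ with $\delta_\square$, which is exactly the quantity Corollary \ref{COR:SQUARECLOSE} controls, so the step ``the second inequality places $G'$ in $\PP$'' is immediate. (The paper states $\PP$ with $\delta_1$, which makes that same step in its own robustness argument less transparent; your choice is the one that actually matches the lemma being invoked.) The closure inclusion $\overline{\PP}\subseteq\RR$ and the robustness verification are essentially identical to the paper's. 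The only minor gap is that you do not explicitly say that the $n(\eps)$ in the definition of robustness should be taken $\ge n_0$, and that the constant-factor bookkeeping between $d_1(G,G')$ and $\|W_G-W_{G'}\|_1$ is being treated loosely (as the paper itself does), but neither affects the correctness of the argument.
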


\begin{proof}
Let $\eps>0$. Since $\RR$ is testable, it follows by Theorem
\ref{PROPTEST-FN} that there is and $\eps'>0$ such that
$B_{\square}(\RR,\eps') \subseteq B_1(\RR,\eps)$. The set $\RR$, with
the distance function $\delta_{\square}$, is a compact metric space,
and hence it can be covered by a finite number of balls
$B_i=B_\square(W_i,\eps'/2)$, $i=1,\dots,m$. For every $1\le i\le m$,
there is a smallest positive integer $n_i$ such that if $n\geq n_i$
then there is a graph $G$ with $n$ nodes such that $W_G\in B_i$. Let
$n_{\eps}=\max_i n_i$. So for every $U\in\RR$, and $n\ge n_\eps$
there is a graph $G$ with $n$ nodes such that (using the $i$ for
which $B_i\ni U)$,
\[
\delta_\square(W_G,U)\le \delta_\square(W_G,W_i) + \delta_\square(W_i,U)
\le \frac{\eps'}{2}+\frac{\eps'}{2}=\eps'\le\eps.
\]
This also implies that $\delta_1(W_G,\RR)\le \eps$.

For every $n\ge 1$, let
\[
\eps_n=\max\bigl(\frac{50}{\sqrt{\log\log n}},~\sup\{\eps:~n_\eps \ge
n\}\bigr).
\]
Clearly $\eps_n\searrow 0$ as $n\to\infty$, and $\eps_{n_\eps}\ge
\eps$. We prove that the property
$\PP=\{G:~\delta_1(W_G,\RR)\leq\eps_{|V(G)|}\}$ is robust and its
closure is $\RR$.

First we show that $\RR\supseteq\overline{\PP}$. Indeed, if
$W\in\overline{\PP}$, then there is a sequence $G_n\in\PP$ with
$|V(G_n)|\to\infty$ such that $\delta_\square(W_{G_n},W)\to 0$. Since
$G_n\in\PP$ means that $\delta_\square(W_{G_n},\RR)\le
\delta_1(W_{G_n},\RR)\le \eps_{|V(G_n)|}\to 0$, it follows that
$\delta_\square(W,\RR)= 0$. Since $\RR$ is closed, this implies that
$W\in\RR$.

To show that $\RR\subseteq\overline{\PP}$, consider any $U\in\RR$,
and let $\eps>0$. As we have seen above, there is a graph $G\in\PP$
with $n_\eps$ nodes such that $\delta_\square(W_G,U)\le\eps$ and
$\delta_1(W_G,\RR)\le\eps\le \eps_{n_\eps}$. So $G\in\PP$, which
shows that there is a function $W_G$ with $G\in\PP$ arbitrarily close
to $U$.

To show that property $\PP$ is robust, consider any graph $G$ with
$n$ nodes. Choose $U\in\RR$ so that $\|W_G-U\|_1 \le 2d_1(W_G,\RR)$.
By Corollary \ref{COR:SQUARECLOSE}, there is a graph $\widetilde{G}$
on $n$ nodes such that $d_1(\widetilde{G},U)\le 4\|W_G-U\|_1 \le
8d_1(W_G,\RR)$ and $\|W_{\widetilde{G}}-U\|_\square
\le\frac{50}{\sqrt{\log\log n}}\le \eps_n$. Thus
$\widetilde{G}\in\PP$, and
\begin{align*}
d_1(G,\PP) &\le d_1(G,\widetilde{G})= \|W_G-W_{\widetilde{G}}\|_1 \le
\|W_G-U\|_1 +\|\widetilde{G}-U\|_1\\ &\le 2d_1(W_G,\RR) +
8d_1(W_G,\RR) = 10d_1(W_G,\RR).
\end{align*}

Thus $\PP$ is robust and $\overline{\PP}=\RR$ is testable. Theorem
\ref{char1} implies that $\PP$ is testable.
\end{proof}

Let us say that two graph properties $\PP$ and $\PP'$ are {\it
equivalent} if their closure is the same. Theorem \ref{genprop}
implies that equivalence classes of testable graph properties are in
a one to one correspondence with the testable graphon properties.

As an application of these results, we give a purely combinatorial
characterization of testability, which generalizes the result of Alon
and Shapira on the testability of hereditary properties, and also
contains a finite analogue of Theorem \ref{REG-TEST}.

\begin{theorem}\label{COMBCHAR}
For a graph property $\PP$, the following are equivalent:

\smallskip

{\rm (a)} $\PP$ is testable;

\smallskip

{\rm (b)} For every $\eps>0$ there is an $\eps'>0$ and an $n'>0$ such
that if $G\in\PP$, $G'$ is any other graph such that
$\delta_\square(G,G')<\eps'$, and $|V(G)|, |V(G')|\ge n'$, then
$d_1(G',\PP)<\eps$.

\smallskip

{\rm (c)} For every $\eps>0$ there is an $\eps_0>0$ and an $n_0>0$
such that if $G\in\PP$ and $G'$ is an induced subgraph of $G$ such
that $\delta_\square(G,G')<\eps_0$ and $|V(G')|\ge n_0$, then
$d_1(G',\PP)<\eps$.
\end{theorem}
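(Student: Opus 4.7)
My plan is to prove the cyclic chain (a) $\Rightarrow$ (b) $\Rightarrow$ (c) $\Rightarrow$ (a). The implication (a) $\Rightarrow$ (b) is exactly Lemma~\ref{kozel2} with the roles of the two graphs renamed, while (b) $\Rightarrow$ (c) is immediate: if $G'$ is an induced subgraph of $G$ with $|V(G')|\ge n_0$, then $|V(G)|\ge|V(G')|\ge n_0$ as well, so the hypotheses of (b) are met with $n'=n_0$ and $\eps'=\eps_0$. All the substance lies in (c) $\Rightarrow$ (a), which I would deduce from Theorem~\ref{char1}: it suffices to check that, assuming (c), both $\overline\PP$ is testable and $\PP$ is robust.

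To verify that $\overline\PP$ is testable I would invoke Theorem~\ref{PROPTEST-FN} and establish $B_\square(\overline\PP,\eps')\subseteq B_1(\overline\PP,\eps)$. Given $W$ with $\|W-U\|_\square<\eps'$ for some $U\in\overline\PP$, I would fix graphs $H_m\in\PP$ with $H_m\to U$ in $\delta_\square$ and observe that for $k$ large the random $k$-subgraphs $\Ge(k,H_m)$ are $\delta_\square$-close to $H_m$ with overwhelming probability (Theorem~\ref{THM:SAMPLE2}), so (c) forces $\E(d_1(\Ge(k,H_m),\PP))$ to be arbitrarily small. Letting $m\to\infty$ and using the continuity of induced subgraph densities in $\delta_\square$ transfers this bound to $\E(d_1(\Ge(k,U),\PP))$; then choosing $\eps'$ small (with $k$ fixed) lets me use \eqref{kozel-fn} to transfer it again to $\E(d_1(\Ge(k,W),\PP))$. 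I would then take realizations $G_k=\Ge(k,W)$ for which both $d_1(G_k,\PP)<\eps$ and $\delta_\square(G_k,W)\to 0$ hold, choose $J_k\in\PP$ on the same vertex set with $d_1(G_k,J_k)<\eps$, and use compactness of $(\WW_0,\delta_\square)$ to extract a subsequential $\delta_\square$-limit $V\in\overline\PP$ of $W_{J_k}$. Lemma~\ref{fura} then yields $\delta_1(W,V)\le\liminf\delta_1(W_{G_k},W_{J_k})\le\eps$, hence $d_1(W,\overline\PP)\le\eps$.

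For robustness, consider $G$ on $n$ nodes with $d_1(W_G,\overline\PP)\le\eps'$. Proposition~\ref{G2WG}(c) applied to the equitable $N$-blowups of $G$ furnishes some $N$ and $H^*\in\PP$ with $|V(H^*)|=Nn$ and $d_1(G^N,H^*)\le 2\eps'$; because $W_{G^N}=W_G$, this means $\|W_G-W_{H^*}\|_1\le 2\eps'$. I would then analyze the random graph $R=\Ge'(n,W_{H^*})$: Lemma~\ref{LEM:GPRIMEL1} with reference graph $G$ gives $\E(d_1(R,G))\le 2\eps'$; Lemma~\ref{LEM:GPRIME} gives $\delta_\square(R,W_{H^*})<50/\sqrt{\log\log n}$ with high probability; and since the sampling intervals $L_i=[(i-1)/n,i/n]$ each lie within a distinct union of blocks of $W_{H^*}$, the construction automatically produces $R$ as an induced subgraph of $H^*$. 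Choosing $n$ large enough that $50/\sqrt{\log\log n}<\eps_0(\eps/3)$ from (c), condition (c) applied to $H^*$ and its induced subgraph $R$ gives $d_1(R,\PP)<\eps/3$. All three good events hold simultaneously with positive probability, so I can fix a realization with $d_1(R,G)\le 4\eps'$ (via Markov) and $d_1(R,\PP)<\eps/3$, yielding $d_1(G,\PP)<\eps$ once $\eps'\le\eps/12$.

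The hardest part will be the robustness step, where the random sample $R$ must play three roles simultaneously: it must be an induced subgraph of $H^*\in\PP$ so that (c) can be invoked, it must be $\delta_\square$-close to $H^*$ (the hypothesis of (c)), and it must be $d_1$-close to $G$ (to chain the two distance estimates together). Proposition~\ref{G2WG}(c) together with the tailor-made construction $\Ge'$ of Section~\ref{SEC:WRAND} is exactly what makes this simultaneous coupling possible.
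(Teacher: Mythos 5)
Your implications (a)$\Rightarrow$(b) (rename $G$ and $G'$ in Lemma~\ref{kozel2}) and (b)$\Rightarrow$(c) are correct and match the paper. The substance is in (c)$\Rightarrow$(a), and there you correctly reduce to showing two things via Theorem~\ref{char1}: testability of $\overline\PP$ and robustness of $\PP$.

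Your robustness argument is a genuinely different and rather clean route. The paper's proof of robustness goes through its Claims labelled FN-SAMPLE and W-FLEX, sampling $\Ge'(n,W)$ from a \emph{function} $W\in\overline\PP$ and then checking the $t_\ind>0$ hypothesis. You instead use Proposition~\ref{G2WG}(c) to manufacture an actual graph $H^*\in\PP$ on $Nn$ vertices with $\|W_G - W_{H^*}\|_1$ small, and observe that $\Ge'(n,W_{H^*})$ is deterministically an induced subgraph of $H^*$ (because $W_{H^*}$ is $0$--$1$ valued and the sampling intervals align with the blocks). That lets you invoke (c) directly, with no auxiliary claim. One technical wrinkle: Proposition~\ref{G2WG}(c) produces an \emph{equitable} blowup, not necessarily an exact one, so $|V(H^*)|$ need not be a multiple of $n$ and the interval/block alignment need not hold exactly. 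You would need to upgrade to exact blowups (easy, since the $d_1$ discrepancy between equitable and exact $m$-blowups is $O(1/m)$), or sample one node from each block of the equitable partition instead of using the uniform $\Ge'$ intervals. This is a fixable gap, and the overall idea is sound.

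The testability-of-$\overline\PP$ argument, however, has a real gap: an order-of-quantifiers problem. You bound $\E\bigl(d_1(\Ge(k,U),\PP)\bigr)$ for $U\in\overline\PP$ uniformly in $k$, which is fine. But to transfer this to $\E\bigl(d_1(\Ge(k,W),\PP)\bigr)$ via \eqref{kozel-fn} you need
\[
\sum_{F\in\FF_k}\bigl|t_\ind(F,W)-t_\ind(F,U)\bigr|\ \lesssim\ 2^{\binom{k}{2}}\binom{k}{2}\,\eps',
\]
so $\eps'$ must be roughly $2^{-k^2}$, i.e., \emph{depend on $k$}. You then ``take realizations $G_k=\Ge(k,W)$ for which both $d_1(G_k,\PP)<\eps$ and $\delta_\square(G_k,W)\to 0$'' and apply Lemma~\ref{fura}; but $\delta_\square(G_k,W)\to 0$ forces $k\to\infty$, while $\eps'$ must be fixed \emph{before} $W$ is chosen. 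So for the given $W$, your expectation bound only holds for $k$ up to roughly $\sqrt{\log(1/\eps')}$, and you cannot run $k\to\infty$ to feed Lemma~\ref{fura}. Working with a single large $k$ does not rescue you either: from a single $G_k$ and $J_k\in\PP$ with $d_1(G_k,J_k)$ small and $\delta_\square(G_k,W)$ small, all you can extract is that $d_\square(W,\overline\PP)$ is small --- which is the hypothesis, not the conclusion. The paper avoids this by \emph{not} going through an expectation bound. Its Claim FN-SAMPLE is a deterministic statement with a \emph{fixed} cut-distance threshold $\eps_1$ that does not degrade with $|V(G)|$, and the hypothesis $t_\ind(G,W)>0$ is secured via the flexing trick in Claim W-FLEX: one restricts to $U$ that agree with $W$ on $W^{-1}(\{0,1\})$, so that a $U$-random graph automatically has $t_\ind(\cdot,W)>0$, and the general $U_n$ are handled by replacing them with flexings $U_n'$ with $\|U_n-U_n'\|_1\to0$ (a consequence of cut-norm convergence via Lemma~\ref{WSTAR}). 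Some version of this support/flexing idea seems essential; I don't see how to repair the expectation-transfer route without it.
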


\begin{remark}
Condition (b) says that, roughly speaking, if a graph $G$ is close to
a graph $H\in\PP$ in the $\delta_\square$ distance, then it is also
close to a (possibly different) graph $J\in\PP$ in edit distance. But
we need to be careful: Let $\PP$ be the (trivial) graph property of
having at most $1$ node, then a large edgeless graph will be close to
$\PP$ in the $\delta_\square$ distance, but not in $d_1$. The theorem
shows that it is enough to add the assumption that the graphs are
large enough.

Condition (c) is a weakening of Alon--Shapira condition that the
graph is hereditary. Theorem 2.11 in \cite{BCLSV1} implies that a
randomly chosen $k$-node induced subgraph of $G$ is closer to $G$
than $10/\sqrt{\log k}$ in the $\delta_\square$ distance, with large
probability. So we can think of induced subgraphs $G'$ satisfying
$\delta_\square(G,G')<\eps_0$ as ``typical''.
\end{remark}

\begin{proof}
Lemma \ref{kozel2} says that (a)$\Rightarrow$(b), and
(b)$\Rightarrow$(c) is trivial. To prove that (c)$\Rightarrow$(a), we
start with proving a version of the condition in the theorem for
functions.

\begin{claim}\label{FN-SAMPLE}
For every $\eps>0$ there is an $\eps_1>0$ and an $n_1>0$ such that if
$W\in\overline{\PP}$ and $G$ is graph such that $|V(G)|\ge n_1$,
$t_\ind(G,W)>0$ and $d_\square(W_G,W)<\eps_1$, then
$d_1(G,\PP)<\eps$.
\end{claim}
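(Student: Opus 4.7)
The plan is to apply condition (c) of Theorem \ref{COMBCHAR} (which is the hypothesis in the direction (c)$\Rightarrow$(a) we are proving) with big graph some $H_n\in\PP$ that approximates $W$, and small graph $G$. Concretely: since $W\in\overline{\PP}$, there is a sequence $H_n\in\PP$ with $|V(H_n)|\to\infty$ and $\delta_\square(W_{H_n},W)\to 0$. I will show that for all sufficiently large $n$ the graph $G$ embeds as an induced subgraph of $H_n$, and that $\delta_\square(G,H_n)$ is small. Applying (c) to the pair $(H_n,G)$ then yields $d_1(G,\PP)<\eps$.

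More precisely, given $\eps>0$, let $\eps_0$ and $n_0$ be the constants furnished by condition (c), and set $\eps_1=\eps_0/2$ and $n_1=n_0$. Suppose $G$ satisfies the hypotheses of the claim. The key step is to produce an $H_n\in\PP$ that (i) contains $G$ as an induced subgraph and (ii) lies within cut distance $\eps_0$ of $G$. For (i), with $k=|V(G)|$ fixed, inequalities \eqref{kozel-fn} and \eqref{T-IND} give
\[
t_\ind(G,H_n)\;\ge\;t_\ind(G,W_{H_n})-\frac{\binom{k}{2}}{|V(H_n)|-\binom{k}{2}}\;\longrightarrow\;t_\ind(G,W)\;>\;0,
\]
using $\delta_\square(W_{H_n},W)\to 0$ and $|V(H_n)|\to\infty$. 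Hence $t_\ind(G,H_n)>0$ for all large $n$, meaning that $G$ is an induced subgraph of $H_n$. For (ii), the triangle inequality yields
\[
\delta_\square(G,H_n)=\delta_\square(W_G,W_{H_n})\;\le\;d_\square(W_G,W)+\delta_\square(W,W_{H_n})\;<\;\eps_1+o(1),
\]
which drops below $2\eps_1=\eps_0$ for all large $n$. Fixing any such $n$ and invoking condition (c) with the big graph $H_n\in\PP$ and the induced subgraph $G$ (of size $|V(G)|\ge n_1=n_0$) gives $d_1(G,\PP)<\eps$, as required.

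There is no real obstacle here; the only thing to be careful about is that $n_1$ must depend only on $\eps$ (it does, via $n_0$), whereas the choice of index $n$ in the approximating sequence $H_n$ is allowed to depend on $G$ and $W$, and we only need one such $n$ for each given $G$. The hypothesis $t_\ind(G,W)>0$ is used crucially to guarantee that $G$ actually appears as an induced subgraph of some $H_n$; without it, the argument breaks down exactly as one would expect.
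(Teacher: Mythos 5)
Your proof is correct and follows essentially the same route as the paper's: same choice of constants $\eps_1=\eps_0/2$, $n_1=n_0$, same use of an approximating sequence $H_n\in\PP$ with $H_n\to W$, same observation that $t_\ind(G,H_n)\to t_\ind(G,W)>0$ forces $G$ to be an induced subgraph of $H_n$ for large $n$, and the same triangle-inequality bound on $\delta_\square(G,H_n)$ before invoking condition (c). You merely spell out the convergence $t_\ind(G,H_n)\to t_\ind(G,W)$ via \eqref{kozel-fn} and \eqref{T-IND}, where the paper states it directly.
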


Given $\eps>0$, let $\eps_0$ and $n_0$ be as in the condition of the
theorem, and set $\eps_1=\eps_0/2$, $n_1=n_0$. Let $H_n\in\PP$ be a
sequence of graphs such that $H_n\to W$. Then $t_\ind(G,H_n)\to
t_\ind(G,W)>0$, so for large enough $n$, $G$ is an induced subgraph
of $H_n$. Furthermore, we have $\delta_\square(G,H_n)\le
\delta_\square(W_G,W)
+\delta_\square(W,W_{H_n})\to\delta_\square(W_G,W)\le\eps_0/2$, and
so for large enough $n$ we have $\delta_\square(G,H_n)<\eps_0$. So
the condition of the theorem implies that $d_1(G,\PP)<\eps$. This
proves Claim \ref{FN-SAMPLE}.

To prove that $\PP$ is testable, we use Theorem \ref{char1}: it
suffices to prove that $\overline{\PP}$ is testable and $\PP$ is
robust. To prove that $\overline{\PP}$ is testable, we use Theorem
\ref{PROPTEST-FN}: We want to prove that if a function is close to
$\overline{\PP}$ in the $\|.\|_\square$ norm, then it is also close
in the $\|.\|_1$ norm. Our next step is proving a special case of
this.

\begin{claim}\label{W-FLEX}
For every $\eps>0$ there is an $\eps_2>0$ such that if
$W\in\overline{\PP}$ and $U\in\WW$ is a function such that $U=W$
wherever $W(x,y)\in\{0,1\}$ and $d_\square(W,U)<\eps_2$, then
$d_1(U,\overline{\PP})<\eps$.
\end{claim}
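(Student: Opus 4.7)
The plan is to exhibit a function $V\in\overline{\PP}$ with $\delta_1(U,V)<\eps$ by sampling a $U$-random graph, correcting it to a member of $\PP$ via Claim \ref{FN-SAMPLE}, and then passing to a $\delta_\square$-limit.

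Given $\eps>0$, I would first apply Claim \ref{FN-SAMPLE} with $\eps/2$ in place of $\eps$ to obtain $\eps_1>0$ and $n_1\ge1$, and set $\eps_2=\eps_1/2$.

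The crux, and the only place where the compatibility hypothesis $U=W$ on $\{W\in\{0,1\}\}$ is used, is the following: contraposing this hypothesis gives $U(x,y)>0\Rightarrow W(x,y)>0$ and $U(x,y)<1\Rightarrow W(x,y)<1$ almost everywhere. Hence, for every finite graph $G$, any positive-measure family of tuples $(x_1,\dots,x_k)$ witnessing $t_\ind(G,U)>0$ simultaneously witnesses $t_\ind(G,W)>0$, so $t_\ind(G,U)>0$ implies $t_\ind(G,W)>0$.

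For each sufficiently large $n$, I would sample $G_n=\Ge'(n,U)$. By construction every realization has $t_\ind(G_n,U)>0$, hence $t_\ind(G_n,W)>0$ by the observation above. By Lemma \ref{LEM:GPRIME}, with probability greater than $1-1/\sqrt n$ we have $\delta_\square(W_{G_n},U)<50/\sqrt{\log\log n}$, and combining this with $\|U-W\|_\square<\eps_2$ yields $\delta_\square(W_{G_n},W)<\eps_1$ for all sufficiently large $n$. Since the hypotheses on $W$ in Claim \ref{FN-SAMPLE} depend only on the isomorphism class of $W$, we may replace $W$ by the representative that realizes this $\delta_\square$-distance to $W_{G_n}$ in cut norm; Claim \ref{FN-SAMPLE} then provides $J_n\in\PP$ with $V(J_n)=V(G_n)$ and $d_1(G_n,J_n)<\eps/2$.

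Finally I would extract a limit. By compactness of $(\WW_0,\delta_\square)$, some subsequence $(W_{J_{n_k}})$ converges in $\delta_\square$ to a graphon $V$; since $J_{n_k}\in\PP$ and $|V(J_{n_k})|\to\infty$, we have $V\in\overline{\PP}$. Along the same subsequence $\delta_\square(W_{G_{n_k}},U)\to0$, so Lemma \ref{fura} yields
\[
\delta_1(V,U)\le\liminf_{k\to\infty}\delta_1(W_{J_{n_k}},W_{G_{n_k}})\le\liminf_{k\to\infty}d_1(J_{n_k},G_{n_k})\le \eps/2.
\]
Therefore $d_1(U,\overline{\PP})\le\delta_1(U,V)<\eps$, as required.

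The main obstacle I anticipate is the coordination of labelings between $d_\square$ and $\delta_\square$ when invoking Claim \ref{FN-SAMPLE}, together with the extraction of the $\delta_\square$-limit $V$ and the use of Lemma \ref{fura} to convert an $L_1$-bound between matched pairs $(G_{n_k},J_{n_k})$ into an $L_1$-bound (up to isomorphism) between $U$ and $V$. The substantive content of the argument is compressed into the $t_\ind$-positivity observation, which is essentially free from the hypothesis that $U$ agrees with $W$ at the $0$ and $1$ values.
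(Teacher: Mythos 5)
Your proof is correct and follows essentially the same approach as the paper's: choose graphs $G_n\to U$ with $t_\ind(G_n,U)>0$ (via a $U$-random graph construction), observe that the compatibility hypothesis forces $t_\ind(G_n,W)>0$, apply Claim~\ref{FN-SAMPLE} to correct $G_n$ to $H_n\in\PP$ in edit distance, extract a $\delta_\square$-limit $W'\in\overline{\PP}$ of the $H_n$, and invoke Lemma~\ref{fura} to transfer the $L_1$-bound to $\delta_1(U,W')$. The only cosmetic difference is that you use $\Ge'(n,U)$ with Lemma~\ref{LEM:GPRIME} where the paper uses $\Ge(n,U)$, and you are slightly more explicit about the relabeling needed to pass from $\delta_\square$ to $d_\square$ when invoking Claim~\ref{FN-SAMPLE}.
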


Let $\eps_1$ and $n_1$ be as in Claim \ref{FN-SAMPLE}, and set
$\eps_2=\eps_1/2$. Choose a sequence $G_n$ of graphs such that
$G_n\to U$ and $t_\ind(G_n,U)>0$. (For example, random graphs
$G_n=\Ge(n,U)$ have this property with large probability.) By the
condition on $U$, we have $t_\ind(G_n,W)>0$. Furthermore,
$\delta_\square(W_{G_n},W)\le
\delta_\square(W_{G_n},U)+
\delta_\square(U,W) \to \delta_\square(U,W) < \eps_1/2$, and hence
for large enough $n$, we have $\delta_\square(W_{G_n},W)<\eps_1$. If
we also choose $n$ large enough so that $|V(G_n)|\ge n_1$, then
$d_1(G_n,\PP)<\eps$ by Claim \ref{FN-SAMPLE}. Thus there exist graphs
$H_n\in\PP$ with $V(H_n)=V(G_n)$ such that $d_1(G_n,H_n)\le \eps$. We
choose a subsequence such that $H_n\to W'$ for some function $W'$,
which is then in $\overline{\PP}$. Now Lemma \ref{fura} implies that
$\delta_1(U,W')< \eps$, and hence $d_1(U, \overline{\PP})<\eps$.

\begin{claim}\label{PPTEST}
$\overline{\PP}$ is testable.
\end{claim}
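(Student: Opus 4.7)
The plan is to verify the criterion of Theorem~\ref{PROPTEST-FN}: for every $\eps>0$ I need to exhibit $\eps'>0$ with $B_\square(\overline\PP,\eps')\subseteq B_1(\overline\PP,\eps)$. I would argue by contradiction, using compactness of $(\WW_0,\delta_\square)$ to pass to a limiting $W\in\overline\PP$ and then Claim~\ref{W-FLEX} to handle the resulting perturbation.

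Suppose, for contradiction, there exist $\eps>0$ and a sequence $U_n\in\WW_0$ with $d_\square(U_n,\overline\PP)\to 0$ but $d_1(U_n,\overline\PP)\ge\eps$. Pick $W_n\in\overline\PP$ with $\|U_n-W_n\|_\square\to 0$. By the compactness of $(\WW_0,\delta_\square)$, together with the fact that $\overline\PP$ is $\delta_\square$-closed (a routine diagonal argument on its definition), a subsequence of $(W_n)$ converges in $\delta_\square$ to some $W\in\overline\PP$, so there exist measure-preserving bijections $\phi_n$ with $\|W_n^{\phi_n}-W\|_\square\to 0$. Replacing $U_n$ and $W_n$ by $U_n^{\phi_n}$ and $W_n^{\phi_n}$ (this leaves the cut norm, the $L_1$ norm, and the distances from $\overline\PP$ unchanged, since $\overline\PP$ is isomorphism invariant) I may assume $\|W_n-W\|_\square\to 0$, and hence $\|U_n-W\|_\square\to 0$.

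Now set $E=\{(x,y):W(x,y)\in\{0,1\}\}$ and define a ``corrected'' function $U'_n$ to equal $W$ on $E$ and $U_n$ on $E^c$. Then $U'_n-W=\one_{E^c}(U_n-W)$, and Lemma~\ref{WSTAR}, applied with $Z=\one_{E^c}\in\WW_0$, yields $\|U'_n-W\|_\square\to 0$. Meanwhile,
\[
d_1(U_n,U'_n)=\int_{\{W=0\}}U_n\,dx\,dy+\int_{\{W=1\}}(1-U_n)\,dx\,dy\longrightarrow 0,
\]
since Lemma~\ref{WSTAR} also gives $\int_S U_n\to\int_S W$ for every measurable $S$, and the limiting integrals vanish by the definitions of $\{W=0\}$ and $\{W=1\}$. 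Feeding $U'_n$ into Claim~\ref{W-FLEX} with threshold $\eps/2$, we get $d_1(U'_n,\overline\PP)<\eps/2$ for large $n$; combining with $d_1(U_n,U'_n)<\eps/2$ gives $d_1(U_n,\overline\PP)<\eps$, contradicting the choice of $U_n$.

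The main subtlety is the compactness step: $\delta_\square$-convergence of $W_n$ is only up to isomorphism, so one has to apply the same $\phi_n$ to $U_n$ and invoke isomorphism invariance of $\overline\PP$ in order to upgrade the $\delta_\square$-approximation to a genuine $\|\cdot\|_\square$-approximation of $U_n$ by a fixed $W\in\overline\PP$. Once that is done, the remaining work is a routine double use of Lemma~\ref{WSTAR}, separating cut-norm control on $E^c$ from $L_1$-control on $E$, which is exactly what Claim~\ref{W-FLEX} is designed to consume.
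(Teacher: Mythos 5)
Your proposal is correct and follows essentially the same route as the paper: both invoke Theorem~\ref{PROPTEST-FN}, use compactness of $(\WW_0,\delta_\square)$ and isomorphism-invariance to reduce to genuine cut-norm convergence $\|U_n-W\|_\square\to 0$ for a single $W\in\overline\PP$, then form the corrected function $U_n'$ (equal to $W$ on $\{W\in\{0,1\}\}$), apply Lemma~\ref{WSTAR} twice to get $\|U'_n-W\|_\square\to 0$ and $\|U_n-U'_n\|_1\to 0$, and finish with Claim~\ref{W-FLEX}. If anything, you are a touch more careful than the paper at the compactness step, by taking the limit along the approximating sequence $W_n\in\overline\PP$ rather than along $U_n$, which makes $W\in\overline\PP$ immediate from $\delta_\square$-closedness.
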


We use Theorem \ref{PROPTEST-FN}. Let $(U_n)$ be a sequence of
functions in $\WW_0$ such that $d_\square(U_n,\overline{\PP})\to 0$,
we want to prove that $d_1(U_n,\overline{\PP})\to 0$. We may assume
that $(U_n)$ is convergent in the $\delta_1$ distance, and hence
applying appropriate measure preserving transformations, we may
assume that there is a function $W\in\WW_0$ such that
$\|U_n-W\|_\square\to 0$. Let
\[
U_n'(x,y)=
  \begin{cases}
    W(x,y), & \text{if $W(x,y)\in\{0,1\}$}, \\
    U_n(x,y), & \text{otherwise}.
  \end{cases}
\]
Let $\eps>0$ and choose $n_2,\eps_2$ as in Claim \ref{W-FLEX}, with
input $\eps/2$. Just as in the proof of Theorem \ref{FLEX-TEST}, we
see that $\|U_n-U_n'\|_1\to 0$, so we can choose a large enough $n$
such that $\|U_n-U_n'\|_1<\min\{\eps/2,\eps_2/2\}$ and also
$\|U_n-W\|_\square<\eps_2/2$. Then $U_n'=W$ wherever $W\in\{0,1\}$
and $\|U'_n-W\|_\square\le \|U'_n-U_n\|_\square+\|U_n-W\|_\square\le
\|U'_n-U_n\|_1+\|U_n-W\|_\square<\eps_2$, and so by Claim
\ref{W-FLEX} we have $\|U'_n-W\|_1<\eps/2$. Thus $\|U_n-W\|_1\le
\|U'_n-U_n\|_1+\|U_n-W\|_1\le\eps$.

\begin{claim}\label{PPROBUST}
$\PP$ is robust.
\end{claim}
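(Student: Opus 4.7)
\medskip
\noindent\textbf{Proof plan.}
The plan is to combine Claim~\ref{FN-SAMPLE} with the $\Ge'(n,W)$ sampling construction from Section~\ref{SEC:WRAND}, which is the right tool here because Lemma~\ref{LEM:GPRIMEL1} provides the $L_1$-control that a deterministic $W$-approximation would not. Given $\eps>0$, I would first invoke Claim~\ref{FN-SAMPLE} with error $\eps/2$ to obtain constants $\eps_1>0$ and $n_1$, then set $\eps'=\eps/16$ and choose $n'\ge n_1$ large enough that $50/\sqrt{\log\log n'}<\eps_1$. I will argue that these parameters witness robustness.

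So let $G$ be any graph with $n:=|V(G)|\ge n'$ and $d_1(W_G,\overline{\PP})\le\eps'$. Pick $W\in\overline{\PP}$ with $\|W_G-W\|_1<2\eps'$ and form $\tilde G=\Ge'(n,W)$. By Lemma~\ref{LEM:GPRIMEL1}, $\E(d_1(G,\tilde G))=\|W_G-W\|_1<\eps/8$, so Markov's inequality gives $d_1(G,\tilde G)<\eps/4$ with probability at least $1/2$; simultaneously, Lemma~\ref{LEM:GPRIME} yields $\delta_\square(W_{\tilde G},W)<\eps_1$ with probability at least $1-1/\sqrt n$. These events co-occur with positive probability for $n\ge n'$, so one may fix a realization of $\tilde G$ satisfying both. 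In addition, with probability $1$ we have $t_\ind(\tilde G,W)>0$: the drawn graph lies in the support of the discrete distribution $\Ge'(n,W)$, which means the non-negative integrand defining $t_\ind(\tilde G,W)$ is strictly positive on a positive-measure subset of $L_1\times\cdots\times L_n\subseteq[0,1]^n$, forcing the full integral over $[0,1]^n$ to be positive.

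With these three properties in hand, I would apply Claim~\ref{FN-SAMPLE} to $\tilde G$ and $W$; since its proof uses $\delta_\square$ rather than $d_\square$, the bound $\delta_\square(W_{\tilde G},W)<\eps_1$ suffices. This yields $d_1(\tilde G,\PP)<\eps/2$, and the triangle inequality then gives $d_1(G,\PP)\le d_1(G,\tilde G)+d_1(\tilde G,\PP)<\eps$, which is exactly robustness. The one delicate point will be the positivity of $t_\ind(\tilde G,W)$ needed to invoke Claim~\ref{FN-SAMPLE}; the reason I use $\Ge'(n,W)$ rather than, say, Corollary~\ref{COR:SQUARECLOSE} as a black box is precisely that this positivity comes for free from the random construction, whereas it would need to be argued separately for any deterministic approximant to $G$.
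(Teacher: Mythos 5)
Your proof is correct and follows essentially the same route as the paper: take $W\in\overline{\PP}$ close to $W_G$ in $L_1$, form $\tilde G=\Ge'(n,W)$, use Lemma~\ref{LEM:GPRIMEL1} for the $d_1$-control of $d_1(G,\tilde G)$, Lemma~\ref{LEM:GPRIME} for the cut-distance control of $\tilde G$ to $W$, the almost-sure positivity $t_\ind(\tilde G,W)>0$, and then apply Claim~\ref{FN-SAMPLE} to $\tilde G$. Your side remark about $\delta_\square$ versus $d_\square$ is a fair observation (the same implicit identification occurs in the paper's own proof, where Lemma~\ref{LEM:GPRIME} delivers a $\delta_\square$-bound that is used as a $d_\square$-bound), and your explicit justification of $t_\ind(\tilde G,W)>0$ is a small expositional improvement over the paper, which asserts it without comment.
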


Let $\eps>0$, let $\eps_1$ and $n_1$ be chosen so that they satisfy
Claim \ref{FN-SAMPLE} with input $\eps/2$, and let
$\eps'=\min(\eps/4,\eps_1/3)$. Let $G$ be a graph with $|V(G)|=m\geq
n_1$ and $d_1(W_G,\overline{\PP})< \eps'$. Let $W\in\overline{\PP}$
be a function such that $\|W_G-W\|_1<\eps'$. Consider the random
graph $G'=\Ge'(n,W)$. Then $t_\ind(G',W)>0$ with probability $1$, and
by Lemma \ref{LEM:GPRIME}, with probability tending to $1$,
$\|W_{G'}-W\|_\square\le \eps'$ if $n$ is large enough. Furthermore,
Lemma \ref{LEM:GPRIMEL1} implies that
$\E(d_1(G,G'))=\|W_G-W\|_1<\eps'$, and so with probability at least
$1/2$, we have $d_1(G,G')<2\eps'<\eps/2$. In such a case
\[
\|W_G-W\|_\square \le \|W_G-W_{G'}\|_\square + \|W_{G'}-W\|_\square
\le\|W_G-W_{G'}\|_1 + \|W_{G'}-W\|_\square <3\eps'=\eps_1,
\]
and so by Claim \ref{FN-SAMPLE}, we have $d_1(G',\PP)<\eps/2$. Hence
$d_1(G,\PP)\le d_1(G,G')+d_1(G',\PP) <\eps$, which proves that $\PP$
is robust.

Using Theorem \ref{char1}, this completes the proof of Theorem
\ref{COMBCHAR}.
\end{proof}

\subsection{Property testing vs. parameter testing}

Parameter testing is a problem related to property testing, and in
some respects simpler; but the main facts are often analogous. It was
introduced in \cite{BCLSV1}, where a number of different
characterizations were also given (see also \cite{BCLSSV}). We
summarize the main results about parameter testing, to point out this
analogy; finally, we prove a direct connection between these notions.

A {\it graph parameter} is a function defined on isomorphism types
of graphs. A graph parameter $f$ is {\it testable}, if for every
$\eps>0$ there is a positive integer $k=k(\eps)$ such that if $G$
is a graph with at least $k$ nodes and $S$ is a random subset of
$k$ nodes of $G$ (chosen uniformly over all $k$-element sets),
then
\[
\Pr(|f(G)-f(G[S])|>\eps)<1/3.
\]
Testability of parameters is related to the convergence of graph
sequences:

\begin{prop}[\cite{BCLSV1}]\label{thm:TESTABLE-PAR}
A graph parameter $f$ is testable if and only if $f(G_n)$
converges for every convergent graph sequence $(G_n)$.
\end{prop}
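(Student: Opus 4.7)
The plan is to prove both directions of the equivalence; the harder one is the converse, which will combine random induced-subgraph sampling with compactness of $(\WW_0,\delta_\square)$.

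For the forward direction, suppose $f$ is testable and $(G_n)$ is a convergent graph sequence with $G_n\to W$. Given $\eps>0$, let $k=k(\eps/2)$ from the definition of testability, and for each $n$ with $|V(G_n)|\geq k$ set
\[
A_n=\{F\in\FF_k:~|f(F)-f(G_n)|\leq\eps/2\}.
\]
Testability gives $\Pr(\Ge(k,G_n)\in A_n)\geq 2/3$. Since $G_n\to W$ implies $t_\ind(F,G_n)\to t_\ind(F,W)$ for each of the finitely many $F\in\FF_k$, the distribution of the isomorphism type of $\Ge(k,G_n)$ on $\FF_k$ converges in total variation to that of $\Ge(k,W)$. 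Hence $\Pr(\Ge(k,W)\in A_n)>1/2$ for all large $n$. For two large indices $n,m$, both $A_n,A_m$ have mass exceeding $1/2$ under this common limit distribution, so $A_n\cap A_m\neq\emptyset$; any $F$ in the intersection yields $|f(G_n)-f(G_m)|\leq\eps$, so $(f(G_n))$ is Cauchy and thus converges.

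For the converse I argue contrapositively. Suppose $f$ is not testable: there is $\eps>0$ such that for every $k$ there exists $G_k$ with $|V(G_k)|\geq k$ and $\Pr(|f(G_k)-f(G_k[S])|>\eps)\geq 1/3$ for $S$ a uniformly random $k$-subset of $V(G_k)$. Applying Theorem \ref{THM:SAMPLE2} to $W_{G_k}$ (and absorbing the $O(k^2/|V(G_k)|)$ error arising when two points of the continuous sample fall into the same vertex interval; compare the remark following Theorem \ref{COMBCHAR}), for all sufficiently large $k$ we have $\delta_\square(G_k,G_k[S])\leq 10/\sqrt{\log k}$ with probability at least $1-e^{-k^2/(2\log k)}>2/3$. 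A union bound then produces $S_k\subseteq V(G_k)$ such that simultaneously $|f(G_k)-f(G_k[S_k])|>\eps$ and $\delta_\square(G_k,G_k[S_k])\leq 10/\sqrt{\log k}$; set $H_k=G_k[S_k]$.

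By compactness of the graphon space in $\delta_\square$, pass to a subsequence $(k_j)$ along which $G_{k_j}\to W$ for some $W\in\WW_0$. Since $\delta_\square(G_{k_j},H_{k_j})\to 0$, we also have $H_{k_j}\to W$. The interleaved sequence $G_{k_1},H_{k_1},G_{k_2},H_{k_2},\ldots$ thus has $|V(\cdot)|\to\infty$ (because $|V(H_{k_j})|=k_j$) and converges to $W$ in $\delta_\square$, so it is a convergent graph sequence. But $|f(G_{k_j})-f(H_{k_j})|>\eps$ for every $j$ forces $(f(\cdot))$ along this interleaved sequence to violate the Cauchy property, so it cannot converge. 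This contradicts the assumption that $f$ converges on every convergent graph sequence, completing the proof. The main obstacle is the sampling step in paragraph two, which is handled by the sample-closeness bound of Theorem \ref{THM:SAMPLE2}.
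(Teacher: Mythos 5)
This proposition is cited from \cite{BCLSV1} and the present paper does not give its own proof, so there is nothing internal to compare against; I am evaluating your argument on its own terms.

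Your forward direction is correct. The observation that $\Pr(\Ge(k,G_n)=F)=t_\ind(F,G_n)$ for $F\in\FF_k$, combined with $t_\ind(F,G_n)\to t_\ind(F,W)$ over the finite set $\FF_k$, does give convergence in total variation; since $A_n$ is isomorphism-invariant, $\Pr(\Ge(k,W)\in A_n)>1/2$ for all large $n$, so any two large-index sets $A_n,A_m$ must intersect, which yields the Cauchy property for $(f(G_n))$. This is clean.

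The converse has the right architecture (sample a bad induced subgraph, interleave, invoke compactness of $(\WW_0,\delta_\square)$), but the justification of the key sampling step is not sound as written. Theorem \ref{THM:SAMPLE2} bounds $\delta_\square(U,\Ge(k,U))$ for the \emph{$W$-random} graph, in which $k$ points are drawn independently from $[0,1]$. Coupling this to $G_k[S]$ (a uniformly random $k$-subset of $V(G_k)$) via $U=W_{G_k}$ requires that the $k$ continuous samples land in distinct vertex intervals, an event of probability $(|V(G_k)|)_k/|V(G_k)|^k$. You describe the discrepancy as an ``$O(k^2/|V(G_k)|)$ error'' to be absorbed, but the negation of testability only guarantees $|V(G_k)|\ge k$, and if $|V(G_k)|$ is on the order of $k$ the no-collision probability is exponentially small in $k$ rather than close to $1$, so nothing gets absorbed. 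The step you actually need is the bound for random \emph{induced subgraphs of a graph}, namely the result the paper invokes as Theorem~2.9 (and the Remark after Theorem~\ref{COMBCHAR} refers to as Theorem~2.11) of \cite{BCLSV1}: for every $\eps>0$ there is $n_\eps$ so that for all $G$ with $|V(G)|\ge n_\eps$ and all $n_\eps\le m\le|V(G)|$, $\delta_\square(\Ge(m,G),G)<\eps$ with probability at least $2/3$, uniformly over $G$. You gesture at this remark parenthetically, but your stated derivation from Theorem \ref{THM:SAMPLE2} does not establish it. Once you replace that derivation by a direct citation of the graph-sampling bound, the union bound produces the desired $S_k$ for all large $k$ and the rest of your argument (compactness, interleaving, contradiction with Cauchy) goes through correctly.
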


A {\it graphon functional} is a real valued function defined on
graphons; equivalently, a functional defined on $\WW_0$ that is
invariant under isomorphism. A graphon functional $f$ is {\it
testable}, if there is a graph parameter $g$ such that for every
$\eps>0$ there is a positive integer $k(\eps)$ such that for every
$W\in\WW_0$ and $k\ge k(\eps)$,
\[
\Pr(|f(W)-g(\Ge(k,W))|>\eps)<1/3.
\]
Informally, we can estimate the value of $f(W)$ by generating a
$W$-random graph with sufficiently many nodes, and evaluating the
graph parameter $g$ on this.

Testable graphon parameters have a simple characterization:

\begin{prop}\label{FUNCT-TEST}
A graphon parameter is testable if and only if it is continuous in
the norm $\|.\|_\square$.
\end{prop}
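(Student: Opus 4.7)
The plan is to prove both directions. Since $f$ is invariant under isomorphism, $\|.\|_\square$-continuity and $\delta_\square$-continuity on $\WW_0$ are equivalent: the inequality $\delta_\square \le \|.\|_\square$ gives one direction, while in the other, if $\delta_\square(W_n,W) \to 0$ we can pick isomorphic copies $W_n^{\phi_n}$ with $\|W_n^{\phi_n}-W\|_\square \to 0$, and then $f(W_n) = f(W_n^{\phi_n}) \to f(W)$. I will work with $\delta_\square$-continuity throughout.

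For the ``if'' direction, assume $f$ is continuous in the cut metric. By the compactness of $(\WW_0,\delta_\square)$ proved in \cite{LSz2}, $f$ is uniformly continuous: given $\eps>0$ there is $\delta>0$ such that $\delta_\square(U,V)<\delta$ implies $|f(U)-f(V)|<\eps$. Define the graph parameter $g(G)=f(W_G)$. By Theorem \ref{THM:SAMPLE2}, picking $k$ large enough so that $10/\sqrt{\log k}<\delta$ and $e^{-k^2/(2\log k)}<1/3$, we have $\delta_\square(W,W_{\Ge(k,W)})<\delta$ with probability $>2/3$, and in this event $|f(W)-g(\Ge(k,W))|<\eps$. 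This verifies the definition of testability.

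For the ``only if'' direction, assume $f$ is testable with witness $g$, and suppose $\|W_n-W\|_\square\to 0$; I aim to show $f(W_n)\to f(W)$. The essential input is that for each fixed $k$, the distribution $\mu_U$ of $\Ge(k,U)$ on the finite set $\FF_k$ satisfies $\mu_U(F)=t_\ind(F,U)$, so by \eqref{kozel-fn} and the finiteness of $\FF_k$, the total variation distance $\mathrm{TV}(\mu_{W_n},\mu_W)\to 0$. Given target $\eps>0$, apply testability with parameter $\eps/3$ to obtain $k$ such that for every $U$ the $\mu_U$-mass of $A_U:=\{F:g(F)\in[f(U)-\eps/3,f(U)+\eps/3]\}$ exceeds $2/3$.

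The key trick is a ``sum-exceeds-one'' argument. For $n$ large enough that $\mathrm{TV}(\mu_W,\mu_{W_n})<1/3$, we get $\mu_{W_n}(A_W)>2/3-1/3=1/3$, while $\mu_{W_n}(A_{W_n})>2/3$. If the intervals $[f(W)\pm\eps/3]$ and $[f(W_n)\pm\eps/3]$ were disjoint, the sets $A_W$ and $A_{W_n}$ would be disjoint, forcing $\mu_{W_n}(A_W\cup A_{W_n})>1$, a contradiction. Hence the intervals overlap, yielding $|f(W)-f(W_n)|\le 2\eps/3<\eps$. The main obstacle is precisely this step: turning the random, constant-probability test into a deterministic continuity statement, which is resolved by playing the concentration intervals under $\mu_{W_n}$ against each other through TV closeness.
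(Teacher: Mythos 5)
Your proof is correct and follows essentially the same route as the paper's: for the ``if'' direction, uniform continuity via compactness of $(\WW_0,\delta_\square)$ plus Theorem \ref{THM:SAMPLE2} with $g(G)=f(W_G)$; for the ``only if'' direction, closeness of the distributions of $\Ge(k,U)$ and $\Ge(k,W)$ forces the two high-probability ``good'' sets of sampled graphs to intersect, giving a common witness $F$. The paper states this last step tersely (``so there is a graph $F$ such that $|f(W)-g(F)|\le\eps$ and $|f(U)-g(F)|\le\eps$''); your total-variation and sum-exceeds-one formulation is exactly the rigorous version of that sentence, not a different argument.
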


\begin{proof}
Suppose that $f$ is testable. If $\|U-W\|_\square$ is small enough,
then the distributions of $\Ge(k,W)$ and $\Ge(k,U)$ are close, and so
there is a graph $F$ such that $|f(W)-g(F)|\le\eps$ and
$|f(U)-g(F)|\le\eps$, so $|f(W)-f(U)|\le 2\eps$. So $f$ is
continuous.

Conversely, if $f$ is continuous, then it is uniformly continuous by
the compactness of $\WW$, and so for every $\eps>0$ there is an
$\eps'>0$ such that if $\delta_\square(U,W)\le\eps'$ then
$|f(U)-f(W)|\le\eps'$. Define $g(F)=f(W_F)$ for a graph $F$. By
Theorem \ref{THM:SAMPLE2}, if $k$ is large enough, then with large
probability,
\[
\delta_\square(W_{\Ge(k,W)},W)\le \eps',
\]
and hence with large probability,
\[
|g(\Ge(k,W))-f(W)| = |f(W_{\Ge(k,W)})-f(W)|\le \eps.
\]
This proves that $f$ is testable.
\end{proof}

Theorem 5.1 in \cite{BCLSV1} implies the following.

\begin{prop}\label{PARAM-TEST}
A graph parameter is testable if and only if there is a testable
graphon parameter $f$ such that $f(W_G)-g(G)\to 0$ if $|V(G)|\to
\infty$.
\end{prop}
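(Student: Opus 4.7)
The plan is to combine Proposition \ref{thm:TESTABLE-PAR} (testable graph parameters are exactly those that converge along every convergent graph sequence) with Proposition \ref{FUNCT-TEST} (testable graphon parameters are exactly the $\|.\|_\square$-continuous ones). The candidate $f$ in the ``only if'' direction will be defined by taking limits along converging graph sequences.

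For the ``only if'' direction, assume $g$ is a testable graph parameter. By Proposition \ref{thm:TESTABLE-PAR}, for every convergent graph sequence $G_n$ with $|V(G_n)|\to\infty$, the limit $\lim_n g(G_n)$ exists. For $W\in\WW_0$, set $f(W)=\lim_n g(G_n)$ for any graph sequence $G_n\to W$ (such sequences exist because every graphon arises as a graph-sequence limit). The value is independent of the chosen sequence: interleaving two sequences $G_n\to W$ and $H_n\to W$ yields another convergent sequence with limit $W$, so by Proposition \ref{thm:TESTABLE-PAR} the values $g$ takes along it converge, forcing $\lim g(G_n)=\lim g(H_n)$. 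Since isomorphic functions can be approximated by the same graph sequences, $f$ is a graphon parameter.

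Next I would show $f$ is continuous in $\delta_\square$. Given $W_n\to W$ in $\delta_\square$, for each $n$ the definition of $f(W_n)$ yields a graph sequence $G_n^{(m)}\to W_n$ with $g(G_n^{(m)})\to f(W_n)$; pick $m(n)$ so that $G_n:=G_n^{(m(n))}$ satisfies $|V(G_n)|\ge n$, $\delta_\square(W_{G_n},W_n)<1/n$, and $|g(G_n)-f(W_n)|<1/n$. Then $G_n\to W$, so by the definition of $f$ we have $g(G_n)\to f(W)$, and the bound $|g(G_n)-f(W_n)|\to 0$ gives $f(W_n)\to f(W)$. Applying Proposition \ref{FUNCT-TEST} shows $f$ is testable. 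To verify $f(W_G)-g(G)\to 0$, argue by contradiction: if some sequence $(G_n)$ with $|V(G_n)|\to\infty$ satisfies $|f(W_{G_n})-g(G_n)|>\eps$, use compactness of $(\WW_0,\delta_\square)$ to extract a subsequence with $G_n\to W$; then $g(G_n)\to f(W)$ by the definition of $f$, while $f(W_{G_n})\to f(W)$ by continuity, contradicting the gap.

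For the ``if'' direction, assume there is a testable graphon parameter $f$ with $f(W_G)-g(G)\to 0$. By Proposition \ref{FUNCT-TEST}, $f$ is $\|.\|_\square$-continuous. For any convergent sequence $G_n\to W$ with $|V(G_n)|\to\infty$, we have $\delta_\square(W_{G_n},W)\to 0$, hence $f(W_{G_n})\to f(W)$; combined with $f(W_{G_n})-g(G_n)\to 0$ this yields $g(G_n)\to f(W)$. Proposition \ref{thm:TESTABLE-PAR} then gives that $g$ is testable. The main (mild) technical point is the diagonal construction in the continuity argument; both directions otherwise reduce cleanly to the two cited propositions.
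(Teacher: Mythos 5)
Your proof is correct. It is worth noting that the paper itself does not give an argument for this proposition; it simply asserts that it follows from Theorem~5.1 in \cite{BCLSV1}. Your proof is a clean, self-contained derivation from the two earlier characterizations (Propositions~\ref{thm:TESTABLE-PAR} and~\ref{FUNCT-TEST}), which is a legitimate way to get the result without importing the external theorem. The key moves --- defining $f(W)$ as the limit of $g$ along any convergent graph sequence tending to $W$, establishing well-definedness by interleaving, proving $\delta_\square$-continuity via a diagonal extraction, and deducing $f(W_G)-g(G)\to 0$ by a compactness-plus-contradiction argument --- are all sound, and implicitly use that convergence of graph sequences is equivalent to $\delta_\square$-Cauchy (a fact the paper records from \cite{LSz2,BCLSV1}). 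The only point worth making explicit is that $\delta_\square$-continuity of a graphon parameter and $\|\cdot\|_\square$-continuity are interchangeable (as used in the paper's own proof of Proposition~\ref{FUNCT-TEST}), since $\delta_\square\le\|\cdot\|_\square$ and $f$ is isomorphism-invariant; you rely on this silently in passing from the metric you actually prove continuity in to the norm appearing in Proposition~\ref{FUNCT-TEST}.
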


We can view this last fact as follows: Every graphon functional $f$
gives rise to a graph parameter $\widehat{f}$ by
$\widehat{f}(G)=f(W_G)$. This graph parameter $\widehat{f}$ is
testable if $f$ is testable, and testable graph parameters are
exactly those that are ``asymptotically equal'' to $\widehat{f}$ for
some testable graphon functional $f$.

Alon and Shapira \cite{AS} proved that the edit distance from every
hereditary graph property is a testable parameter. More generally,
Fischer and Newman \cite{FN} proved:

\begin{theorem}[Fischer--Newman]\label{dist-test}
A graph property $\PP$ is testable if and only if the edit distance
$d_1(G,\PP)$ is a testable parameter.
\end{theorem}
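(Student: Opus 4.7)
The plan is to deduce both directions from Propositions~\ref{FUNCT-TEST} and~\ref{PARAM-TEST}. The candidate testable graphon parameter is $f(W)=d_1(W,\overline{\PP})$; Proposition~\ref{G2WG}(b) gives $f(W_G)-d_1(G,\PP)\to 0$ as $|V(G)|\to\infty$, so by Proposition~\ref{PARAM-TEST} the forward direction reduces to showing that $f$ is a testable graphon parameter, and by Proposition~\ref{FUNCT-TEST} this reduces further to showing that $f$ is continuous on $(\WW_0,\|.\|_\square)$. Here we also use Lemma~\ref{P-BARP-TEST} to know that $\overline{\PP}$ is testable.

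For the continuity of $f$, fix $W_n\to W$ in cut norm. For the upper bound on $\limsup f(W_n)$, pick $U\in\overline{\PP}$ with $\|W-U\|_1\le f(W)+\eta$ and apply Lemma~\ref{REV-CONV-1} to produce $V_n\in\WW_0$ with $\|V_n-U\|_\square\to 0$ and $\|W_n-V_n\|_1\to \|W-U\|_1$. Since $d_\square(V_n,\overline{\PP})\to 0$ and $\overline{\PP}$ is testable, Theorem~\ref{PROPTEST-FN} furnishes $U_n\in\overline{\PP}$ with $\|V_n-U_n\|_1\to 0$, so $\limsup f(W_n)\le \limsup\|W_n-U_n\|_1 = \|W-U\|_1\le f(W)+\eta$. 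For the lower bound, choose $U_n\in\overline{\PP}$ with $\|W_n-U_n\|_1\le f(W_n)+1/n$, pass to a subsequence with $\delta_\square(U_n,U)\to 0$ by compactness of $(\WW_0,\delta_\square)$, and note that $U\in\overline{\PP}$ since $\overline{\PP}$ is $\delta_\square$-closed via a diagonal argument. Lemma~\ref{fura} then yields $\liminf f(W_n)\ge \liminf \delta_1(W_n,U_n)\ge \delta_1(W,U)\ge d_1(W,\overline{\PP})=f(W)$, where the last inequality uses the isomorphism-invariance of $\overline{\PP}$.

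For the converse, assume $d_1(\cdot,\PP)$ is a testable parameter and let $k_\eps$ be its sample size at tolerance $\eps/3$. Invoking the relaxation of Definition~\ref{PROPTEST} that allows the test property to depend on $\eps$, set $\PP'_\eps=\{H:|V(H)|<k_\eps\}\cup\{H:d_1(H,\PP)\le \eps/3\}$. If $G\in\PP$ and $k<k_\eps$, every $k$-subgraph lies in $\PP'_\eps$ by fiat; if $G\in\PP$ and $k\ge k_\eps$ then $d_1(G,\PP)=0$ and the parameter test gives $d_1(G[S],\PP)\le \eps/3$ with probability at least $2/3$. Conversely, if $d_1(G,\PP)\ge \eps$ and $k\ge k_\eps$, then with probability at least $2/3$ we have $d_1(G[S],\PP)\ge 2\eps/3>\eps/3$, so $G[S]\notin\PP'_\eps$, verifying condition (b) of testability.

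The main obstacle is the cut-norm continuity of $f$: Lipschitz continuity of $f$ in the $L_1$-norm is automatic, but converting cut-norm convergence into $L_1$-control of the distance to $\overline{\PP}$ requires marshalling the full battery of Section~\ref{RELATE-NORMS}, with Theorem~\ref{PROPTEST-FN} turning $\|.\|_\square$-proximity to $\overline{\PP}$ into $\|.\|_1$-proximity, and Lemmas~\ref{REV-CONV-1} and~\ref{fura} supplying the joint cut-norm/$L_1$ manipulation of sequences in graphon space.
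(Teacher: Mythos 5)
Your forward direction is essentially the paper's argument: the paper reduces Theorem~\ref{dist-test} to the cut-norm continuity of $d_1(\cdot,\overline{\PP})$ (which is Theorem~\ref{PROPTEST-FN-DIST}) plus the asymptotic equality $d_1(G,\PP)-d_1(W_G,\overline{\PP})\to 0$ (Proposition~\ref{G2WG}(b)), and then cites Theorem~6.1(d) of \cite{BCLSV1} for the transfer from graphon functional to graph parameter. You invoke Propositions~\ref{FUNCT-TEST} and~\ref{PARAM-TEST} for that transfer and reprove the continuity inline, reproducing almost verbatim the proof of Theorem~\ref{PROPTEST-FN-DIST} (Lemma~\ref{REV-CONV-1} and Theorem~\ref{PROPTEST-FN} for the upper bound, compactness plus Lemma~\ref{fura} for the lower bound). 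That is the same route; it would be cleaner to simply cite Theorem~\ref{PROPTEST-FN-DIST} here rather than redoing it.

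What you add beyond the paper is an explicit construction for the converse (parameter testable $\Rightarrow$ property testable), which the paper delegates to \cite{BCLSV1}. The construction $\PP'_\eps=\{H:|V(H)|<k_\eps\}\cup\{H:d_1(H,\PP)\le\eps/3\}$ is the right idea, but there is a gap in how you invoke the parameter test. The definition of a testable parameter guarantees $\Pr(|f(G)-f(G[S])|>\eps/3)<1/3$ only for the specific sample size $k=k(\eps/3)$, whereas condition~(b) of Definition~\ref{PROPTEST} requires the test to succeed for \emph{every} $k$ with $k_\eps\le k\le |V(G)|$. You assert ``the parameter test gives $d_1(G[S],\PP)\le\eps/3$ with probability at least $2/3$'' for all $k\ge k_\eps$, which does not follow directly. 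This can be repaired --- e.g.\ by first replacing $1/3$ with a smaller failure probability (the analogue of Lemma~\ref{CONSTANTS} for parameters), sampling a $k(\eps/6)$-subset $T$ of a $k$-subset $S$ to relate $f(G)$, $f(G[S])$, and $f(G[T])$, and union-bounding; or more cleanly by routing through Proposition~\ref{thm:TESTABLE-PAR} and arguing by contradiction along convergent sequences --- but as written the step is not justified.
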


The following theorem gives an analogue of this result for graphons,
from which Theorem \ref{dist-test} can be deduced:

\begin{theorem}\label{PROPTEST-FN-DIST}
A graphon property $\RR$ is testable if and only if the distance
$d_1(.,\RR)$ is a testable functional.
\end{theorem}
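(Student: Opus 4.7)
The plan is to apply Proposition \ref{FUNCT-TEST}, which identifies testability of a graphon functional with $\|.\|_\square$-continuity on the compact space $(\WW_0,\delta_\square)$. Since $\RR$ is isomorphism-invariant, the functional $W\mapsto d_1(W,\RR)$ descends to a well-defined function on this quotient (and in fact coincides with $\delta_1(.,\RR)$). Thus the theorem reduces to showing that $\RR$ is testable in the sense of Theorem \ref{PROPTEST-FN} if and only if $d_1(.,\RR)$ is $\delta_\square$-continuous.

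For the easy direction (continuity implies testability), I would observe that continuity on the compact space $(\WW_0,\delta_\square)$ automatically upgrades to uniform continuity. Given $\eps>0$, this produces an $\eps'>0$ with $\delta_\square(U,W)<\eps'\Rightarrow|d_1(U,\RR)-d_1(W,\RR)|<\eps$. For any $W\in B_\square(\RR,\eps')$, choose $U\in\RR$ with $d_\square(W,U)<\eps'$; since $d_\square\ge\delta_\square$ and $d_1(U,\RR)=0$, we get $d_1(W,\RR)<\eps$, i.e., $B_\square(\RR,\eps')\subseteq B_1(\RR,\eps)$. Invoking Theorem \ref{PROPTEST-FN} concludes that $\RR$ is testable.

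For the forward direction (testability implies continuity), suppose $\|W_n-W\|_\square\to 0$ and set $a_n=d_1(W_n,\RR)$, $a=d_1(W,\RR)$. To bound $\limsup a_n$ by $a+\eps$, pick $V\in\RR$ with $\|W-V\|_1\le a+\eps$ and apply Lemma \ref{REV-CONV-1} to get $V_n$ with $\|V_n-V\|_\square\to 0$ and $\|W_n-V_n\|_1\to\|W-V\|_1$. Since $V\in\RR$, Theorem \ref{PROPTEST-FN} forces $d_1(V_n,\RR)\to 0$; picking $V_n'\in\RR$ with $\|V_n-V_n'\|_1\le d_1(V_n,\RR)+1/n$ yields $a_n\le\|W_n-V_n\|_1+\|V_n-V_n'\|_1\to\|W-V\|_1\le a+\eps$. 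For the $\liminf$ bound, extract a subsequence $(n_k)$ with $a_{n_k}\to\liminf a_n$, choose $V_k\in\RR$ with $\|W_{n_k}-V_k\|_1\le a_{n_k}+1/k$, and use compactness of $(\WW_0,\delta_\square)$ together with suitable isomorphism adjustments to pass to a further subsequence along which $V_k\to V^*$ in $\delta_\square$. Testability makes the $\|.\|_\square$ and $\|.\|_1$ closures of $\RR$ coincide (a consequence of Theorem \ref{PROPTEST-FN} noted after its statement), so $V^*$ lies in the $L^1$-closure of $\RR$ and hence $\delta_1(W,V^*)\ge d_1(W,\RR)=a$. Applying Lemma \ref{fura} to the pairs $(W_{n_k},W)$ and $(V_k,V^*)$ gives $\liminf a_{n_k}\ge\liminf\delta_1(W_{n_k},V_k)\ge\delta_1(W,V^*)\ge a$.

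The main obstacle is the $\liminf$ argument in the forward direction: one must carefully extract a $\delta_\square$-convergent subsequence from the near-minimizers $V_k$, realign them by measure-preserving maps so that Lemma \ref{fura} applies, and invoke the coincidence of the $\|.\|_\square$- and $\|.\|_1$-closures granted by testability. Once that is done, the remaining steps are routine applications of Lemma \ref{REV-CONV-1}, Theorem \ref{PROPTEST-FN}, and Proposition \ref{FUNCT-TEST}.
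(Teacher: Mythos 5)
Your proposal is correct and follows essentially the same route as the paper's proof: Proposition \ref{FUNCT-TEST} reduces the theorem to $\|.\|_\square$-continuity of $d_1(.,\RR)$; the easy direction uses compactness to extract $B_\square(\RR,\eps')\subseteq B_1(\RR,\eps)$; the $\limsup$ bound uses Lemma \ref{REV-CONV-1} together with Theorem \ref{PROPTEST-FN}; and the $\liminf$ bound extracts a $\delta_\square$-convergent subsequence of near-minimizers and applies Lemma \ref{fura}. You are in fact a bit more careful than the printed argument in two places --- replacing the paper's unjustified claim that $\|U_n-U\|_1\to 0$ by the correct conclusion $d_1(U_n,\RR)\to 0$ and a near-minimizer $U_n'\in\RR$, and explicitly invoking the coincidence of the $\|.\|_1$- and $\|.\|_\square$-closures so that the limit $V^*$ of the near-minimizers can be used even when $\RR$ itself is not closed --- but these are small repairs within the same overall strategy.
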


The content of the theorem is that the $d_1$ distance from a testable
property is continuous function in the $\|.\|_\square$ norm. It is
trivial that this distance is continuous in the $\|.\|_1$ norm for
any graphon property. The second half of the proof below shows that
the $d_1$ distance from {\it any} graphon property is lower
semi-continuous in the $\|.\|_\square$ norm.

\begin{proof}
It is easy to see that if the functional $d_1(.,\RR)$ is testable
then $\RR$ is testable: the set $\{W:~d_1(W,\RR)<\eps\}$ is open in
the $\|.\|_\square$ norm, and contains the compact set
$\overline\RR$, so it contains a neighborhood $B_\square(\RR,\eps')$
of $\RR$ for some $\eps'>0$.

Now suppose that $\RR$ is testable. Let $W\in\WW_0$ and let $W_n\to
W$. We claim that $d_1(W_n,\RR)\to d_1(W,\RR)$. We may assume that
$\|W_n-W\|_\square\to 0$.

Let $\eps>0$, and let $U\in\RR$ be such that $\|W-U\|_1\le
d_1(W,\RR)+\eps$. By Lemma \ref{REV-CONV-1}, there is a sequence of
functions $U_n\in\WW$ such that $\|U_n-U\|_\square\to 0$ and
$\|U_n-W_n\|_1\to \|U-W\|_1$. By the testability of $\RR$ and by
Theorem \ref{PROPTEST-FN}, it follows that $\|U_n-U\|_1\to 0$, and so
\[
\|W_n-U\|_1 \le  \|W_n-U_n\|_1 + \|U_n-U\|_1 \to\|U-W\|_1.
\]
Hence
\[
\limsup_{n\to\infty} d_1(W_n,\RR) \le\limsup_{n\to\infty} \|W_n-U\|_1
\le  \|U-W\|_1 \le d_1(W,\RR)+\eps.
\]
Since $\eps>0$ is arbitrary, this implies that
\begin{equation}\label{EQ:UWLIMSUP}
\limsup_{n\to\infty} d_1(W_n,\RR) \le d_1(W,\RR)
\end{equation}

To prove the reverse, let $V_n\in\RR$ be chosen so that
$\|W_n-V_n\|_1 \le d_1(W_n,\RR)+1/n$. By selecting a subsequence, we
may assume that the sequence $(V_n)$ is convergent in the
$\delta_\square$ distance. Let $V\in\WW_0$ be its limit. Clearly
$V\in\RR$. Thus by Lemma \ref{fura} we have
\begin{equation}\label{EQ:UWLIMINF}
d_1(W,\RR) \le \delta_1(W,V) \le\liminf_{n\to\infty}
\delta_1(W_n,V_n) \le \liminf_{n\to\infty} d_1(W_n,\RR).
\end{equation}

The relations \eqref{EQ:UWLIMSUP} and \eqref{EQ:UWLIMINF} prove that
$d_1(W_n,\RR)\to d_1(W,\RR)$, and so $d_1(.,\RR)$ is continuous.
\end{proof}

The theorem of Fischer and Newman (Theorem \ref{dist-test}) is easy
to derive from here. By the results of \cite{BCLSV1} Theorem 6.1(d),
it suffices to prove that for every testable property $\PP$,
$d_1(W,\overline\PP)$ is a continuous function of $W$ in the
$\|.\|_\square$ norm, and $d_1(G,\PP)-d_1(W_G,\overline\PP)\to 0$ if
$|V(G)|\to\infty$. The first assertion follows from Theorem
\ref{PROPTEST-FN-DIST}, the second, from Proposition \ref{G2WG}(b).

\subsection{Flexible properties}\label{FLEXIBLE}

Let $W\in\WW_0$. A function $U\in\WW_0$ is a {\it flexing} of $W$ if
$U(x,y)=W(x,y)$ for all $x,y$ with $W(x,y)\in\{0,1\}$ (so we may
change the values of $W$ that are strictly between $0$ and $1$; note
that we may change these to $0$ or $1$, so the relation is not
symmetric). We say that a function property is {\it flexible} if it
is preserved under flexing. The following Proposition gives some
sufficient conditions for flexibility; the proofs are straightforward
and omitted.

\begin{prop}\label{PROP:FUNC-FLEX}
{\rm (a)} Every function property that implies that the function is
$0-1$ valued is flexible.

\smallskip

{\rm (b)} For every graph $F$, the function property
$\{W\in\WW_0:~t(F,W)=0\}$ is flexible.

\smallskip

{\rm (c)} If $\RR$ is a flexible function property, then the function
properties $\RR^*=\{1-W:~W\in\RR\}$, $\RR^\uparrow$ and
$\RR^\downarrow$ are flexible.

\smallskip

{\rm (d)} The intersection and union of any set of flexible
properties is flexible.
\end{prop}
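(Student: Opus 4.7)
The plan is to dispatch (a), (b), (d) with short arguments and concentrate the real work on (c). For (a), a $0$-$1$ valued $W$ has $\{(x,y):W(x,y)\in\{0,1\}\}=[0,1]^2$, so the defining condition $U=W$ on this set forces every flexing to equal $W$, and the property is trivially flexible. For (d), if each $\RR_i$ is flexible, then any flexing of a $W\in\bigcap_i\RR_i$ stays in each $\RR_i$ separately, hence in the intersection; the union case is immediate from the same observation applied to the single $\RR_j$ that contained $W$. For (b), if $t(F,W)=0$ then the integrand $\prod_{ij\in E(F)}W(x_i,x_j)$ vanishes for almost every $x\in[0,1]^k$, so at almost every $x$ some factor $W(x_i,x_j)$ equals $0$; a flexing $U$ agrees with $W$ on the zero-one set of $W$, so the same factor vanishes for $U$ and $t(F,U)=0$.

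For (c), the $\RR^*$ case is essentially definitional: $U$ is a flexing of $W$ if and only if $1-U$ is a flexing of $1-W$, because $\{W\in\{0,1\}\}=\{1-W\in\{0,1\}\}$ and $U=W$ on this set iff $1-U=1-W$; flexibility of $\RR$ then transfers to $\RR^*$. For $\RR^\uparrow$, I would take $W\in\RR^\uparrow$ with witness $U\in\RR$, $U\preceq W$, pass to isomorphic representatives so that $U\le W$ pointwise almost everywhere, and, given a flexing $\widetilde W$ of $W$, define
\[
\widetilde U(x,y)=\begin{cases}U(x,y)&\text{if }U(x,y)\in\{0,1\},\\\min\bigl(U(x,y),\widetilde W(x,y)\bigr)&\text{otherwise.}\end{cases}
\]
This $\widetilde U$ is by construction a flexing of $U$, so $\widetilde U\in\RR$ by flexibility of $\RR$; and a short case check gives $\widetilde U\le\widetilde W$ almost everywhere: the cases $U=0$ and $U\in(0,1)$ are direct from the formula, while $U=1$ forces $W=1$ (from $U\le W$), hence $\widetilde W=W=1$ since $W\in\{0,1\}$ at that point. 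Therefore $\widetilde U\preceq\widetilde W$, and $\widetilde W\in\RR^\uparrow$. The case of $\RR^\downarrow$ is symmetric, and can also be obtained from the identity $\RR^\downarrow=((\RR^*)^\uparrow)^*$ together with the other two parts of (c).

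The one mild obstacle is the isomorphism alignment in the $\RR^\uparrow$ step: flexings are specified pointwise, while $\preceq$ is only defined up to isomorphism, so before applying the pointwise formula one must use the Borgs--Chayes--Lov\'asz representation of graphon isomorphism to realize $U$, $W$, and the flexing $\widetilde W$ of $W$ simultaneously on a common representative for which $U\le W$ holds pointwise. Once this bookkeeping is in place, the displayed construction is the whole argument.
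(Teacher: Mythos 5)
The paper gives no proof here; it declares the proofs ``straightforward and omitted,'' so there is nothing to compare your argument against line by line. Assessed on its own terms, your proofs of (a), (b), (d), and the $\RR^*$ part of (c) are correct and clean, and your construction for $\RR^\uparrow$ (take $\widetilde U = U$ on $U$'s zero-one set, $\min(U,\widetilde W)$ elsewhere, and check $\widetilde U\le\widetilde W$ by cases) is the right idea.

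The one place I would push back is your treatment of the ``isomorphism alignment'' in the $\RR^\uparrow$ step, which you describe as mild bookkeeping resolved by the Borgs--Chayes--Lov\'asz representation. As stated in this paper, that representation gives a \emph{common quotient}: $W = V^{\phi}$, $W_1 = V^{\psi}$ for measure-preserving $\phi,\psi$. Those maps go the wrong way for what you need: they need not be invertible, so a pointwise flexing $\widetilde W$ of $W$ cannot in general be pulled back to a flexing of $V$ (and hence transported to $W_1$). What actually makes your pointwise construction legitimate is the dual \emph{coupling} form of isomorphism: there is a probability space $(\Omega,\mu)$ with measure-preserving $\pi,\pi':\Omega\to[0,1]$ such that $W^{\pi'}=W_1^{\pi}$ on $\Omega^2$. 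Then $U_1^{\pi}\le W_1^{\pi}=W^{\pi'}$, and $\widetilde W^{\pi'}$ is literally a pointwise flexing of $W_1^{\pi}$, so your $\min$ construction can be run on $\Omega^2$; the resulting $\widetilde U$ is a flexing of $U_1^{\pi}\cong U$ (here one uses that $\RR$ is a graphon property, which is the setting in which the proposition is applied), and $\widetilde U\le\widetilde W^{\pi'}\cong\widetilde W$, giving $\widetilde U\preceq\widetilde W$. The coupling form can itself be deduced from the BCL quotient form (couple $x$ and $y$ on $\{\phi(x)=\psi(y)\}$), but that is an extra step worth spelling out, because it is precisely where the non-invertibility is absorbed. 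In short: correct construction, correct instinct about where the subtlety lies, but the cited tool does not do the transfer directly; the coupling form does.
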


We call a graph property {\it flexible} if its closure is flexible.

\begin{example}\label{ex:FLEX-GR}
(a) The graph property that $G$ is clique of size $\lceil
|V(G)|/2\rceil$, together with isolated nodes, is flexible, since its
closure consists of a single graphon (represented by the function $W$
that is $1$ if $x,y\le 1/2$ and $0$ otherwise).

\smallskip

(b) The graph property that $\omega(G)\ge |V(G)|/2$ is flexible
(where $\omega(G)$ is the size of the largest clique in $G$), since
it is the upward closure of the property in (a). Similarly, the
property that $\alpha(G)\ge |V(G)|/2$ is flexible (where $\alpha(G)$
is the size of the largest independent set in $G$).

\smallskip

(c) The graph property that there is a labeling of the nodes by
$\{1,\dots,n\}$ such that two nodes are connected if and only if
their labels sum to at most $n$, is flexible. Indeed, the closure of
this graph property consists of a single graphon (represented by the
function $W$ that is $1$ if $x+y\le 1$ and $0$ otherwise). This
closure is flexible by Proposition \ref{PROP:FUNC-FLEX}(a).

\smallskip

(d) The graph property that there is a labeling of the nodes by
$\{1,\dots,n\}$ such that all pairs whose labels sum to at most $n$
are connected by an edge, is flexible. Indeed, this is the upward
closure of the property in (c).
\end{example}

These and other examples follow from the following proposition:

\begin{prop}\label{PROP:GRAPH-FLEX}
{\rm (a)} Every graph property $\PP$ for which $\overline{\PP}$
consists of $0-1$ valued functions is flexible.

\smallskip

{\rm (b)} If a graph property is flexible, then so are its upward and
downward closures.

\smallskip

{\rm (c)} If a graph property $\PP$ is flexible, then so is the
property obtained by complementing all graphs in $\PP$.

\smallskip

{\rm (d)} Every hereditary graph property is flexible.
\end{prop}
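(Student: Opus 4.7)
The plan is to handle the four parts by applying Proposition \ref{PROP:FUNC-FLEX} to $\overline{\PP}$, using Theorem \ref{THM:UP} and Proposition \ref{lezaras}(a) as the bridges between graph properties and their closures. Part (a) is essentially immediate: if every $W\in\overline{\PP}$ is $0$-$1$ valued a.e., then the set $\{(x,y):W(x,y)\in(0,1)\}$ is null, so every flexing of $W$ agrees with $W$ a.e.; equivalently, $\overline{\PP}$ is flexible by Proposition \ref{PROP:FUNC-FLEX}(a).

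For (b), Theorem \ref{THM:UP} gives $\overline{\PP^\uparrow}=(\overline{\PP})^\uparrow$, and then Proposition \ref{PROP:FUNC-FLEX}(c) applied to $\overline{\PP}$ yields that $\overline{\PP^\uparrow}$ is flexible. For the downward closure, rather than dualizing Theorem \ref{THM:UP} from scratch, I would combine part (c) with the upward case via the identity $\PP^\downarrow=((\PP^c)^\uparrow)^c$, where $\PP^c$ denotes the graph-complementation of $\PP$. Part (c) itself is straightforward: graph complementation corresponds pointwise to $W\mapsto 1-W$ on step functions, and this map is a $\delta_\square$-isometry; hence $\overline{\PP^c}=\{1-W:W\in\overline{\PP}\}$, and Proposition \ref{PROP:FUNC-FLEX}(c) completes the argument.

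The main content is (d). Let $\PP$ be hereditary, $W\in\overline{\PP}$, and let $U$ be a flexing of $W$. By Proposition \ref{lezaras}(a), it suffices to show $t_\ind(F,U)=0$ for every $F\notin\PP$. I plan to compare the integrands pointwise: at any tuple $(x_1,\dots,x_k)$ where the $U$-integrand $\prod_{ij\in E(F)}U(x_i,x_j)\prod_{ij\notin E(F),\,i\neq j}(1-U(x_i,x_j))$ is strictly positive, every edge factor $U(x_i,x_j)$ must be nonzero and every non-edge factor $1-U(x_i,x_j)$ must be nonzero. Since $U$ coincides with $W$ wherever $W\in\{0,1\}$, this forces $W(x_i,x_j)\neq 0$ on edges and $W(x_i,x_j)\neq 1$ on non-edges, so the $W$-integrand is also strictly positive at the same tuple. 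Because $t_\ind(F,W)=0$ and the $W$-integrand is nonnegative, it vanishes a.e.; thus the $U$-integrand vanishes a.e.\ as well, giving $t_\ind(F,U)=0$. The only real subtlety in the whole proposition is this pointwise integrand comparison in (d); everything else is a direct invocation of previously stated results.
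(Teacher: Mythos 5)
Your proof is correct and takes essentially the same approach as the paper's terse four-line argument: (a) is Proposition \ref{PROP:FUNC-FLEX}(a), (b) is Theorem \ref{THM:UP} plus Proposition \ref{PROP:FUNC-FLEX}(c), (c) is Proposition \ref{PROP:FUNC-FLEX}(c), and (d) is exactly the pointwise integrand comparison you spell out, getting $t_\ind(F,U)=0$ from $t_\ind(F,W)=0$ (the paper states this in one line and in fact writes $t$ where $t_\ind$ from Proposition \ref{lezaras}(a) is meant). The only detail you add beyond the paper is the explicit reduction of the downward-closure half of (b) to the upward case via $\PP^\downarrow=((\PP^c)^\uparrow)^c$, which the paper leaves implicit.
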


\begin{proof}
(a) is obvious. (b) follows from Theorem \ref{THM:UP} and Proposition
\ref{PROP:FUNC-FLEX} (c). Assertion (c) follows from Proposition
\ref{PROP:FUNC-FLEX} (c). Finally, (d) follows from \ref{lezaras}
(a), since if $t(F,W)=0$, then $t(F,U)=0$ for every flexing $U$ of
$W$.
\end{proof}

Our main result about flexible properties is the following.

\begin{theorem}\label{FLEX-TEST}
{\rm (a)} Every closed flexible graphon property is testable.

\smallskip

{\rm (b)} Every robust flexible graph property is testable.
\end{theorem}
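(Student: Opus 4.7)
The plan is to prove (a) by verifying the cut-norm criterion of Theorem \ref{PROPTEST-FN}, and then to deduce (b) from (a) via Theorem \ref{char1}.

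For (a), I argue by contradiction. Suppose $\RR$ is closed and flexible but not testable; then there exist $\eps_0>0$ and a sequence $W_n\in\WW_0$ with $d_\square(W_n,\RR)\to 0$ but $d_1(W_n,\RR)\ge\eps_0$. Pick $U_n\in\RR$ with $\|W_n-U_n\|_\square\to 0$. By compactness of $(\WW,\delta_\square)$, after passing to a subsequence there is a $U\in\WW_0$ with $\delta_\square(U_n,U)\to 0$, and I can choose invertible measure preserving maps $\phi_n$ so that $\|U_n^{\phi_n}-U\|_\square\to 0$. Applying $\phi_n$ to $U_n$ and $W_n$ simultaneously is harmless: it preserves $\|W_n-U_n\|_\square$ and leaves $d_1(W_n,\RR)$ unchanged (since $\RR$ is a graphon property). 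After this realignment, I have $\|W_n-U\|_\square\to 0$, and closedness of $\RR$ in $\|.\|_\square$ yields $U\in\RR$.

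Now I build a flexing of $U$ that is close to $W_n$ in $L_1$. Define
\[
V_n(x,y)=\begin{cases} U(x,y) & \text{if } U(x,y)\in\{0,1\},\\ W_n(x,y) & \text{otherwise.}\end{cases}
\]
Then $V_n\in\WW_0$ and $V_n=U$ on $\{U\in\{0,1\}\}$, so $V_n$ is a flexing of $U$; by flexibility of $\RR$, $V_n\in\RR$. Since $V_n=W_n$ off $\{U\in\{0,1\}\}$,
\[
\|W_n-V_n\|_1=\int_{\{U=0\}} W_n\,dx\,dy + \int_{\{U=1\}} (1-W_n)\,dx\,dy.
\]
Lemma \ref{WSTAR} applied with $Z=\mathbf{1}_{\{U=0\}}$ and $Z=\mathbf{1}_{\{U=1\}}$ shows each integral tends to its value at $W=U$, which is $0$; this contradicts $d_1(W_n,\RR)\ge\eps_0$. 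Part (b) is then immediate: by definition of ``flexible graph property,'' $\overline\PP$ is flexible; a short diagonal argument (for each $\|.\|_\square$-convergent sequence $U_m\in\overline\PP$, choose graphs $G_m\in\PP$ with $\delta_\square(W_{G_m},U_m)\le 1/m$) shows that $\overline\PP$ is closed in $\|.\|_\square$; so (a) gives that $\overline\PP$ is testable, and combined with the hypothesized robustness of $\PP$, Theorem \ref{char1} implies that $\PP$ is testable.

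The main obstacle is the initial upgrade from $\delta_\square$-compactness to genuine $\|.\|_\square$-convergence to a common limit $U\in\RR$, since Lemma \ref{WSTAR} applies only to cut-norm convergent sequences and not to $\delta_\square$-convergent ones. The fix is to apply the measure preserving realignment $\phi_n$ to $W_n$ as well: isomorphism invariance of $\RR$ ensures that both the hypothesis $d_\square(W_n,\RR)\to 0$ and the goal of contradicting $d_1(W_n,\RR)\ge\eps_0$ are unaffected by the common change of coordinates.
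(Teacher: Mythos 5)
Your argument is correct and follows the same route as the paper: define a flexing of the limit function $U$ that agrees with $W_n$ off $\{U\in\{0,1\}\}$ and agrees with $U$ on that set, use flexibility to put it in $\RR$, and apply Lemma \ref{WSTAR} to show its $L_1$-distance to $W_n$ vanishes. The paper merely says ``we can assume $W_n\to W\in\RR$ in cut norm'' and ``(b) is an immediate consequence''; your realignment-by-measure-preserving-maps and the diagonal argument for closedness of $\overline\PP$ are the correct justifications of those two compressed steps.
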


\begin{remark}
1. It is important to assume that the property is closed. We have
seen that the function property of being 0-1 valued is flexible and
it is trivially a graphon property, but it is not testable.

2. We can weaken this notion slightly in a way that we preserve
testability. We say that a function property is {\it weakly flexible}
if it is closed under those flexings that do not change the integral
of the function. A good example for this is the property which
consists of those functions whose integral is $1/2$. One can modify
the proof of Theorem \ref{FLEX-TEST} to show that every weakly
flexible function property is testable.
\end{remark}

\begin{proof}
The second assertion is an immediate consequence of the first, so we
only prove (a).

Assume that $d_{\square}(W_n,\RR)\to 0$ but $d_1(W_n,\RR)\geq\eps$
for some fixed $\eps>0$. We can assume that $W_n$ converges to some
$W\in\RR$ in the $\|.\|_{\square}$ norm. Let $S_0=W^{-1}(0)$ ,
$S_1=W^{-1}(1)$ and let $Z_n\in\WW_0$ denote the function which is
$1$ on $S_1$, $0$ on $S_0$ and is identical with $W_n$ anywhere else.
By flexibility, we have $Z_n\in\RR$. By Lemma \ref{WSTAR},
\[
\lim_{n\to\infty}\| W_n-Z_n\|_1=\lim_{n\to\infty}
\Bigl(\int_{S_0}W_n+\int_{S_1}(1-W_n)\Bigr)=\int_{S_0}W+\int_{S_1}(1-W)=0,
\]
which is a contradiction.
\end{proof}

The theorem of Alon and Shapira \cite{AS} follows easily. Let $\PP$
be a hereditary graph property. By Proposition \ref{PROP:GRAPH-FLEX}
$\PP$ is flexible, and so by Theorem \ref{FLEX-TEST}, its closure is
testable. So it suffices to prove that $\PP$ is robust, which follows
from Proposition \ref{G2WG}(b).

Let $U,W\in\WW_0$ and consider a convex combination $Z=\alpha
U+(1-\alpha)W$, $0<\alpha<1$. Then both $U$ and $W$ are flexings
of $Z$. This implies the following very useful observation:

\begin{prop}\label{PP-CONV}
If $\RR$ is a flexible function property, then $\WW_0\setminus \RR$
is a convex set. $\square$
\end{prop}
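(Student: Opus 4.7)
The plan is to prove the contrapositive of convexity: if $Z = \alpha U + (1-\alpha) W$ with $0 < \alpha < 1$ lies in $\RR$, then both $U \in \RR$ and $W \in \RR$. Equivalently, having $U \notin \RR$ or $W \notin \RR$ forces $Z \notin \RR$, which is exactly the statement that $\WW_0 \setminus \RR$ is convex.

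To do this, I would first verify carefully the observation that precedes the Proposition: both $U$ and $W$ are flexings of $Z$. This reduces to checking pointwise: wherever $Z(x,y) = 0$, since $0 \le U, W \le 1$ and $\alpha, 1-\alpha > 0$, the only way $\alpha U(x,y) + (1-\alpha) W(x,y) = 0$ is to have $U(x,y) = W(x,y) = 0$. Symmetrically, wherever $Z(x,y) = 1$, we must have $U(x,y) = W(x,y) = 1$. Hence $U$ and $W$ agree with $Z$ on the set $Z^{-1}(\{0,1\})$, i.e. they are flexings of $Z$ in the sense of Section \ref{FLEXIBLE}.

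Once this observation is established, flexibility of $\RR$ does the rest: if $Z \in \RR$, then since $U$ is a flexing of $Z$, we get $U \in \RR$, and similarly $W \in \RR$. Contrapositively, if $U \notin \RR$ or $W \notin \RR$, then $Z \notin \RR$, proving that the complement is closed under convex combinations.

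There is essentially no obstacle here beyond recording the pointwise argument above; the entire content is the (a.e.\ pointwise) trick that the extreme values $0$ and $1$ of a function in $\WW_0$ are rigid under taking strict convex combinations with other $\WW_0$ functions, combined with the definition of ``flexing'' and the assumption of flexibility.
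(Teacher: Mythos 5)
Your proof is correct and matches the paper's argument exactly: the paper's one-line observation preceding the proposition is precisely that $U$ and $W$ are flexings of the strict convex combination $Z$, and flexibility then gives $Z\in\RR\Rightarrow U,W\in\RR$. You have merely filled in the routine pointwise verification of the flexing claim, which the paper leaves implicit.
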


\begin{corollary}\label{DIST-CONCAVE}
The distance $d_1(U,\RR)$ from a flexible function property is a
concave function of $U$. $\square$
\end{corollary}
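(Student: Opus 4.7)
\textbf{Proof plan for Corollary \ref{DIST-CONCAVE}.} The plan is to verify concavity directly from the definition by exploiting flexibility: given an approximate nearest point $V\in\RR$ to a convex combination $U=\alpha U_0+(1-\alpha)U_1$, I will construct flexings $V_0,V_1$ of $V$ that are automatically in $\RR$ and that approximate $U_0,U_1$ respectively with the right average cost.

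Fix $U_0,U_1\in\WW_0$ and $\alpha\in[0,1]$, and set $U=\alpha U_0+(1-\alpha)U_1$. Given $\eps>0$, choose $V\in\RR$ with $\|V-U\|_1\le d_1(U,\RR)+\eps$. Let $S=V^{-1}(\{0,1\})$, and for $i=0,1$ define
\[
V_i(x,y)=\begin{cases} V(x,y), & (x,y)\in S,\\ U_i(x,y), & (x,y)\notin S.\end{cases}
\]
Then $V_i\in\WW_0$, and since $V_i$ agrees with $V$ on the set where $V\in\{0,1\}$, each $V_i$ is a flexing of $V$; by flexibility, $V_i\in\RR$.

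Now compute $\|V_i-U_i\|_1=\int_S|V-U_i|$ (the integrand vanishes off $S$), hence
\[
\alpha\|V_0-U_0\|_1+(1-\alpha)\|V_1-U_1\|_1=\int_S\bigl(\alpha|V-U_0|+(1-\alpha)|V-U_1|\bigr).
\]
On $\{V=0\}$ we have $|V-U_i|=U_i$, so the integrand equals $\alpha U_0+(1-\alpha)U_1=U=|V-U|$; on $\{V=1\}$ we have $|V-U_i|=1-U_i$, so the integrand equals $1-U=|V-U|$. Therefore the right-hand side is $\int_S|V-U|\le\|V-U\|_1\le d_1(U,\RR)+\eps$.

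Since $V_i\in\RR$, we have $d_1(U_i,\RR)\le\|V_i-U_i\|_1$, so
\[
\alpha\, d_1(U_0,\RR)+(1-\alpha)\,d_1(U_1,\RR)\le d_1(U,\RR)+\eps,
\]
and letting $\eps\to 0$ yields concavity. The only conceptual step is the construction of $V_0,V_1$; I do not expect a serious obstacle, since once one sees to set $V_i=U_i$ off the $0$--$1$ support of $V$, the identity $\alpha|V-U_0|+(1-\alpha)|V-U_1|=|V-U|$ on $S$ is immediate from the fact that the absolute value is affine on each side of $0$ and on each side of $1$.
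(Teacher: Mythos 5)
Your proof is correct, and since the paper leaves this corollary with no written proof (just a $\square$ following the observation that the constituents of a convex combination are flexings of it), your argument fills a gap that the authors left to the reader; the construction of $V_0,V_1$ by overwriting $U_i$ with $V$ on $V^{-1}(\{0,1\})$ is clean, automatically lands in $\RR$ by flexibility, and the pointwise identity $\alpha|V-U_0|+(1-\alpha)|V-U_1|=|V-U|$ on that set gives exactly the needed inequality. Note that convexity of $\WW_0\setminus\RR$ alone (Proposition~\ref{PP-CONV}) would not suffice for concavity of the distance -- one really does need the full flexing structure as you use it -- so this is a genuine argument, not a formal consequence of the preceding proposition.
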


Alon and Stav \cite{ASt} proved the surprising fact that for every
hereditary graph property $\PP$ there is a number $p$, $0\le p\le 1$,
such that the random graph $\Ge(n,p)$ is, asymptotically, at maximum
edit distance from the property:

\begin{theorem}[Alon and Stav]\label{ALON-STAV}
For every hereditary graph property $\PP$ there is a number $p$,
$0\le p\le 1$, such that for every graph $G$ with $|V(G)|=n$,
\[
d_1(G,\PP)\le \E(d_1(\Ge(n,p),\PP)) + o(1) \qquad (n\to\infty).
\]
\end{theorem}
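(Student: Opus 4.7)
The plan is to use the analytic machinery built up for flexible properties. Define $F:\WW_0\to[0,1]$ by $F(W)=d_1(W,\overline{\PP})$. Since $\PP$ is hereditary, Proposition \ref{PROP:GRAPH-FLEX}(d) gives that $\PP$ is flexible, hence $\overline{\PP}$ is closed and flexible, hence testable by Theorem \ref{FLEX-TEST}(a). Then Theorem \ref{PROPTEST-FN-DIST} says $F$ is continuous in the $\|.\|_\square$ norm, and Corollary \ref{DIST-CONCAVE} says $F$ is concave on $\WW_0$. The strategy is: show $F$ attains its supremum at some constant function $W\equiv p^\ast$; then for a graph $G$ on $n$ nodes, Proposition \ref{G2WG}(b) gives $d_1(G,\PP)=F(W_G)+o(1)\le F(p^\ast)+o(1)$, and a separate argument identifies $F(p^\ast)$ with $\E(d_1(\Ge(n,p^\ast),\PP))+o(1)$.

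The central step is showing $\sup_{W\in\WW_0} F(W)=\max_{p\in[0,1]} F(p)$. For this I would symmetrize: fix $W\in\WW_0$ and for each $n$ let $\phi_\sigma$, $\sigma\in S_n$, be the measure-preserving permutation of $[0,1]$ that permutes the intervals $I_k=[(k-1)/n,k/n]$ according to $\sigma$. Set
\[
\tilde W_n = \frac{1}{n!}\sum_{\sigma\in S_n} W^{\phi_\sigma}\in\WW_0.
\]
Each $W^{\phi_\sigma}$ is isomorphic to $W$, so $F(W^{\phi_\sigma})=F(W)$; by concavity of $F$, $F(\tilde W_n)\ge F(W)$. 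A direct computation shows that on any off-diagonal block $I_i\times I_j$ ($i\ne j$), $\tilde W_n$ is the constant $\frac{1}{n(n-1)}\sum_{k\ne\ell}n^2\int_{I_k\times I_\ell}W$, which tends to $p:=\int_{[0,1]^2}W$ as $n\to\infty$, while the diagonal blocks contribute at most $1/n$ to the $L^1$ norm. Hence $\|\tilde W_n-p\|_1\to 0$, and so $\|\tilde W_n-p\|_\square\to 0$. Continuity of $F$ then gives $F(p)\ge F(W)$. The map $p\mapsto F(p)$ is continuous on $[0,1]$ (constant functions are distance $|p-p'|$ apart in cut norm), so attains its max at some $p^\ast\in[0,1]$; by the above, this is the global max of $F$ on $\WW_0$.

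It remains to relate $F(p^\ast)$ to $\E(d_1(\Ge(n,p^\ast),\PP))$. By Theorem \ref{THM:SAMPLE2}, $\delta_\square(W_{\Ge(n,p^\ast)},p^\ast)\to 0$ in probability, so continuity of $F$ gives $F(W_{\Ge(n,p^\ast)})\to F(p^\ast)$ in probability; since $0\le F\le 1$, bounded convergence yields $\E F(W_{\Ge(n,p^\ast)})\to F(p^\ast)$. A uniform application of Proposition \ref{G2WG}(b) (the error bound there depends only on $|V(G)|$) then gives $\E(d_1(\Ge(n,p^\ast),\PP))\to F(p^\ast)$. Combining with $d_1(G,\PP)\le F(p^\ast)+o(1)$ for every $n$-node graph $G$ closes the proof. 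The main obstacle I expect is the clean verification of the symmetrization step $\|\tilde W_n-p\|_1\to 0$, since one needs to handle the diagonal blocks separately from the off-diagonal blocks; everything else is routine given the testability of $\overline{\PP}$, the concavity of $F$, and Proposition \ref{G2WG}(b).
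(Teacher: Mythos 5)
Your route is essentially the paper's: derive the theorem from the graphon statement that $d_1(\cdot,\overline{\PP})$ is maximized by a constant function (Theorem \ref{ALON-STAV-FN}(a)), then transfer to graphs via Proposition \ref{G2WG}(b) and concentration of $\Ge(n,p)$. Your symmetrization argument fills in what the paper dismisses as ``an easy consequence of Proposition \ref{PP-CONV} and Corollary \ref{DIST-CONCAVE},'' and your handling of the final step (bounded convergence plus a uniform use of Proposition \ref{G2WG}(b)) is a bit more direct than the paper's, which argues the upper bound by contradiction via Theorem \ref{PROPTEST-FN-DIST} and the lower bound via Proposition \ref{prop:fura1} alone.

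There is, however, a genuine error in the symmetrization step, and it is not where you locate the obstacle. The averaged function $\tilde W_n$ is \emph{not} constant on off-diagonal blocks: for $(x,y)\in I_i\times I_j$ with $i\neq j$, one computes $\tilde W_n(x,y)=\frac{1}{n(n-1)}\sum_{k\neq\ell}W\bigl(x+(k-i)/n,\,y+(\ell-j)/n\bigr)$, which depends on $(x,y)$; the constant you wrote down is only its block \emph{average}. To repair this, let $\LL$ denote the partition of $[0,1]$ into $n$ equal intervals and let $W_\LL$ be the conditional expectation of $W$ with respect to the $n\times n$ grid. Block-averaging commutes with permuting blocks, so $(\tilde W_n)_\LL=\frac{1}{n!}\sum_\sigma(W_\LL)^{\phi_\sigma}$, and this stepfunction \emph{is} constant on each off-diagonal block with exactly the value you stated; together with the $O(1/n)$ contribution from the diagonal blocks, $\|(\tilde W_n)_\LL - p\|_1\to 0$. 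For the remainder one uses $\|\tilde W_n-(\tilde W_n)_\LL\|_1\le\|W-W_\LL\|_1\to 0$ as the grid refines, which is the $L^1$ martingale (Lebesgue density) convergence of conditional expectations --- an input you need but did not invoke. Once this is in place the argument closes (and in fact cut-norm convergence, which is all that $F$'s continuity requires, is enough). A smaller point: Proposition \ref{G2WG}(b) requires $\PP$ itself to be testable, not merely $\overline{\PP}$; for hereditary $\PP$ this follows because $\PP$ is robust (Proposition \ref{G2WG}(a) gives $d_1(G,\PP)\le d_1(W_G,\overline\PP)$ directly) and then Theorem \ref{char1} applies, but your write-up leaves this implicit.
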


The following theorem states a functional version and a
generalization of this fact.

\begin{theorem}\label{ALON-STAV-FN}
{\rm (a)} For every flexible graphon property $\RR$, the maximum of
$d_1(.,\RR)$ is attained by a constant function.

\smallskip

{\rm (b)} For every flexible and robust graph property $\PP$ there is
a number $p$, $0\le p\le 1$, such that for every graph $G$ with
$|V(G)|=n$,
\[
d_1(G,\PP)\le \E(d_1(\Ge(n,p),\PP)) + o(1) \qquad (n\to\infty).
\]
\end{theorem}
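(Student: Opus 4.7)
Plan for (a). The three key features of $d_1(\cdot,\RR)$ that I would exploit are: it is concave (Corollary \ref{DIST-CONCAVE}), invariant under isomorphism of the argument (so $d_1(U^\sigma,\RR)=d_1(U,\RR)$ for every measure-preserving $\sigma$), and continuous in the $\|\cdot\|_\square$ norm. Assuming $\RR$ is closed (which we may, since taking closure leaves $d_1(\cdot,\RR)$ unchanged, and in the application to (b) the relevant property $\overline{\PP}$ is closed by construction), Theorem \ref{FLEX-TEST}(a) makes $\RR$ testable, and then Theorem \ref{PROPTEST-FN-DIST} gives the $\|\cdot\|_\square$-continuity. Combined with $\delta_\square$-compactness of $\WW_0/{\cong}$, this ensures that the supremum $M=\sup_{U\in\WW_0}d_1(U,\RR)$ is attained.

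To show $M$ is attained by a constant, I would symmetrize. Fix $U\in\WW_0$ with $p=\int U$, partition $[0,1]$ into $N$ equal intervals $I_1,\dots,I_N$, and for each $\pi\in S_N$ let $\sigma_\pi$ be the piecewise linear measure-preserving bijection that maps $I_i$ linearly onto $I_{\pi(i)}$. Set
\[
V_N=\frac{1}{N!}\sum_{\pi\in S_N}U^{\sigma_\pi}.
\]
Concavity and isomorphism-invariance immediately give
\[
d_1(V_N,\RR)\ge \frac{1}{N!}\sum_{\pi}d_1(U^{\sigma_\pi},\RR)=d_1(U,\RR).
\]
A direct computation shows that for $x\in I_i$, $y\in I_j$ with $i\ne j$, $V_N(x,y)$ equals the Riemann-type average $\frac{1}{N(N-1)}\sum_{k\ne l}U(\tau_k,\tau_l')$, where $\tau_k\in I_k$ and $\tau_l'\in I_l$ occupy the same positions within their intervals as $x,y$ do. A three-epsilon reduction to the case of continuous $U$ shows this average tends to $p$ in $L^1$ of the off-diagonal block; since the diagonal has measure $1/N\to 0$ and $V_N$ is uniformly bounded, $\|V_N-p\mathbf{1}\|_1\to 0$, hence also $\|V_N-p\mathbf{1}\|_\square\to 0$. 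Continuity of $d_1(\cdot,\RR)$ then yields $d_1(p\mathbf{1},\RR)\ge d_1(U,\RR)$. Therefore $M=\sup_{p\in[0,1]}d_1(p\mathbf{1},\RR)$, which is attained on the compact interval $[0,1]$ since $p\mapsto d_1(p\mathbf{1},\RR)$ is continuous.

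Plan for (b). By hypothesis $\overline{\PP}$ is a closed flexible graphon property, hence testable by Theorem \ref{FLEX-TEST}(a); applying (a) to $\RR=\overline{\PP}$ yields $p\in[0,1]$ with $d_1(U,\overline{\PP})\le d_1(p\mathbf{1},\overline{\PP})$ for every $U\in\WW_0$. Proposition \ref{G2WG}(b), which applies since $\PP$ is testable (robust flexible, by Theorem \ref{FLEX-TEST}(b)), gives $d_1(G,\PP)=d_1(W_G,\overline{\PP})+o(1)$ uniformly as $|V(G)|\to\infty$, so
\[
d_1(G,\PP)\le d_1(p\mathbf{1},\overline{\PP})+o(1).
\]
Theorem \ref{THM:SAMPLE2} with $U=p\mathbf{1}$ gives $\delta_\square(W_{\Ge(n,p)},p\mathbf{1})\to 0$ almost surely, and continuity of $d_1(\cdot,\overline{\PP})$ in $\|\cdot\|_\square$ combined with bounded convergence yields $\E\bigl(d_1(W_{\Ge(n,p)},\overline{\PP})\bigr)\to d_1(p\mathbf{1},\overline{\PP})$. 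Another application of Proposition \ref{G2WG}(b) to $\Ge(n,p)$ converts this into $\E(d_1(\Ge(n,p),\PP))\to d_1(p\mathbf{1},\overline{\PP})$. Chaining the two inequalities gives the desired bound.

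The main obstacle is the Riemann-sum convergence $V_N\to p\mathbf{1}$ in part (a): although the intuition (symmetrizing a function over all permutations of a fine partition produces its mean) is transparent, one has to argue carefully for a general bounded measurable $U$, which I would handle by approximating $U$ in $L^1$ by continuous functions and invoking uniform Riemann-sum convergence on the continuous piece.
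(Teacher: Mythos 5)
Your proof is correct and proves both parts. For part (a) the paper gives no argument beyond citing Proposition~\ref{PP-CONV} and Corollary~\ref{DIST-CONCAVE}, so your symmetrization is an instance of the ``easy'' argument the authors had in mind; for part (b) you follow a somewhat different route than the paper. Two observations.

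First, in part (a), the detour through testability and $\|\cdot\|_\square$-continuity is unnecessary and introduces a small soft spot: your parenthetical ``taking closure leaves $d_1(\cdot,\RR)$ unchanged'' is false for the $\|\cdot\|_\square$-closure (see Example~\ref{TEST-FN-EX0}(d), where the $\|\cdot\|_\square$-closure of the $0$-$1$ valued functions is all of $\WW_0$), and it is the $\|\cdot\|_\square$-closure you would need to make $\RR$ closed so that Theorem~\ref{FLEX-TEST}(a) applies. Fortunately this is moot: you establish $\|V_N - p\mathbf{1}\|_1 \to 0$, and $d_1(\cdot,\RR)$ is trivially $1$-Lipschitz in $\|\cdot\|_1$, so you can conclude $d_1(p\mathbf{1},\RR) \ge d_1(U,\RR)$ without ever invoking testability or $\|\cdot\|_\square$-continuity. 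The cleanest version of your (a) uses only concavity, isomorphism-invariance, $\|\cdot\|_1$-Lipschitz continuity, and your Riemann-sum estimate; it then works for an arbitrary flexible graphon property exactly as stated.

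Second, in part (b) the paper argues the two directions separately: for $\E(d_1(\Ge(n,p),\PP)) \ge b + o(1)$ it realizes a sequence $G_n \to p$ and applies the lower-semicontinuity in Proposition~\ref{prop:fura1}, and for the upper bound $d_1(G,\PP) \le b + o(1)$ it runs a contradiction with a convergent subsequence and Theorem~\ref{PROPTEST-FN-DIST}. You instead compute the exact limit $\E(d_1(\Ge(n,p),\PP)) \to d_1(p\mathbf{1},\overline\PP)$ directly via Theorem~\ref{THM:SAMPLE2}, the $\|\cdot\|_\square$-continuity from Theorem~\ref{PROPTEST-FN-DIST}, bounded convergence, and Proposition~\ref{G2WG}(b), and get the upper bound by chaining inequalities. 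The ingredients are the same (you correctly note that robustness plus flexibility gives testability of $\PP$ via Theorem~\ref{FLEX-TEST}(b), which is needed for Proposition~\ref{G2WG}(b)); your version is more direct and arguably cleaner, giving the asymptotics of the expectation rather than just a lower bound, but it is not essentially different in spirit.
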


By Proposition \ref{PROP:GRAPH-FLEX}(d), Theorem \ref{ALON-STAV} is a
consequence. A further corollary is that the conclusion of Theorem
\ref{ALON-STAV-FN} holds for the properties in Example
\ref{ex:FLEX-GR}(a),(b).

\begin{proof}
Part (a) of the theorem is an easy consequence of Proposition
\ref{PP-CONV} and Corollary \ref{DIST-CONCAVE}. (The proof in fact
works for any norm on $\WW$ instead of the norm $\|.\|_1$.)

To prove part (b), let $p\in[0,1]$ maximize $d_1(p,\overline\PP)$
(where $p$ also denotes the constant $p$ function), and let $b$
denote the maximum value. It suffices to prove that
\begin{equation}\label{eq:FL1}
\E(d_1(\Ge(n,p),\PP))\ge b + o(1) \qquad (n\to\infty),
\end{equation}
and
\begin{equation}\label{eq:FL2}
d_1(G,\PP) \le b + o(1)\qquad (|V(G)|\to\infty).
\end{equation}

The first bound is quite easy: We can fix a choice $G_n$ for the
random graphs so that $d_1(G_n,\PP) \le \E(d_1(\Ge(n,p),\PP))+o(1)$
and $G_n\to p$. Then by Proposition \ref{prop:fura1}, we have
\[
\liminf d_1(G_n,\PP) \ge d_1(p,\overline{\PP}) = b.
\]

Suppose that \eqref{eq:FL2} fails. Then there exists a sequence of
graphs $G_n$ with $|V(G_n)|\to\infty$ such that
\[
d_1(G_n,\PP)\to b'>b.
\]
We may assume that $G_n\to W\in\WW_0$. By part (a),
\[
d_1(W,\overline\PP) \le d_1(p,\overline\PP).
\]
Since the property $\PP$ is robust and flexible, it is testable, and
hence by Proposition \ref{G2WG}(b), we have $d_1(G_n,\PP)
-d_1(W_{G_n},\overline{\PP})\to 0$. Thus
$d_1(W_{G_n},\overline{\PP})\to b'$. Theorem \ref{PROPTEST-FN-DIST}
then implies that $d_1(W,\overline{\PP})=b'$, a contradiction.
\end{proof}

\section{Concluding remarks}

We have mentioned after the definition of testable properties that
some modifications in the definition do not change the notion of
testability. Let us discuss some other possible modifications that
would lead to a different, generally less interesting notion.

Examples \ref{TEST-GR-EX}(b) and (c) suggest that in the definition
of of the edit distance, we could allow adding or removing nodes as
well as adding or removing edges. This would of course change which
graph properties are testable, but would not change the closure of
testable properties, due to Theorem \ref{COMBCHAR}.

Example \ref{TEST-GR-EX}(d) is counterintuitive again, since the
densities in the definition can be estimated from samples easily. The
trouble is that a small error in these densities only implies that
the graph is close to a quasirandom graph in the $d_\square$
distance, not in $d_1$.

Considering how useful the cut distance has been, why don't we
measure the error in cut distance? The answer is that this would lead
to a trivial notion: {\it Every graphon property is testable if we
measure the error in the distance $d_\square$}. Indeed, given a
graphon property $\RR$, we can define $\RR'$ as the property of the
graph $G$ that $d_\square(W_G,\RR)\le 10/\sqrt{\log |V(G)|}$. If
$W\in\RR$, then $G=\Ge(n,W)$ satisfies $\delta_\square(W_G,W)\le
10/\sqrt{\log |V(G)|}$ with large probability, and in such cases
$G\in\RR'$. Conversely, if $d_\square(W,\RR)>\eps$, then for
$G=\Ge(k,W)$ with $k>2^{8/\eps^2}$ we have
$\delta_\square(W_G,W)<\eps/2$ with large probability; if this
happens, then we must have $d_\square(W_G,\RR)>\eps/2$ (else it would
follow that $d_\square(W,\RR)<\eps$), and so if $k$ is large enough,
we have $G\in\RR'$.

We get a different and potentially interesting notion of testing
function properties if instead of $\Ge(n,W)$, we consider the
edge-weighted graph $\Ha(n,W)$ in which we select $n$ random points
$X_1,\dots,X_n$ from $[0,1]$, but instead of randomizing a second
time to get the edges, we keep $W(X_i,X_j)$ as the weight of the edge
$ij$. The singleton set $\RR=\{U\equiv 1/2\}$ would become testable
in this notion. We don't have a characterization of testability in
this sense.

\section*{Acknowledgement}

We are indebted to the anonymous referee for pointing out several
errors and inconsistencies, and for suggesting many improvements to
the presentation.

\end{document}